\newtheorem{theorem}{Theorem}[section]
\newtheorem{lemma}[theorem]{Lemma}
\newtheorem{corollary}[theorem]{Corollary}               
\theoremstyle{definition}
\newtheorem{definition}[theorem]{Definition}
\newtheorem{example}[theorem]{Example}
\newtheorem{assumption}[theorem]{Assumption}
\theoremstyle{remark}
\newtheorem{remark}[theorem]{Remark}
\numberwithin{equation}{section}
\newcommand{\blue}[1]{\textcolor{blue}{#1}}
\newcommand{\Rev}[1]{\textcolor{black}{#1}}
\newcommand{\Revthree}[1]{\textcolor{black}{#1}}
\newcommand{\vertiii}[1]{{\|\kern-0.25ex | #1 
		| \kern-0.25ex \|}}
\newcommand{\ddd}{\text{\rm D}}
\newcommand{\dn}{\text{\rm N}}
\newcommand{\dint}{\text{\rm int}}
\newcommand{\vecb}{{\bf b}}
\newcommand{\vecn}{{\bf n}}
\newcommand{\mean}[1]{\{\kern-1.1mm\{#1\}\kern-1.1mm\}}                  
\newcommand{\jump}[1]{[\![#1]\!]}                        
\newcommand{\uu}[1]{\mathbf{#1}}
\newcommand{\ltwo}[2]{\|{#1}\|_{#2}}
\newcommand{\linf}[2]{\|{#1}\|_{L_{\infty}({#2})}}
\newcommand{\ud}{\,\mathrm{d}}
\newcommand{\ndg}[1]{| \kern -.25mm \|{#1}| \kern -.25mm \|}
\newcommand{\nsdg}[1]{| \kern -.25mm \|{#1}| \kern -.25mm \|_{\rm s}}
\newcommand{\vx}{{\tiny\textbullet } }
\newcommand{\ujump}[1]{\lfloor #1\rfloor}   
\newcommand{\vecp}{{\bf p}}
\newcommand{\fes}{S_\mathcal{T}^{\vecp} }
\newcommand{\bgrad}{\nabla_{\mathcal{T}}}
\newcommand{\diam}{\operatorname{diam}}
\newcommand{\coveringmesh}{\mathcal{T}^{\sharp}}
\newcommand{\mesh}{\mathcal{T}}
\newcommand{\norm}[2]{\|{#1}\|_{{#2}}}
\newcommand{\ncdg}[1]{| \kern -.25mm \|{#1}| \kern -.25mm \|_{\rm DG}}
\newcommand{\m}{{\bf m}_i}
\title[$hp$-DG methods on essentially arbitrarily-shaped elements]{$\boldsymbol{\lowercase{hp}}$-Version discontinuous Galerkin methods \\ on essentially arbitrarily-shaped elements}
\author{Andrea Cangiani} \address{
	SISSA, via Bonomea 265, 34136 Trieste, Italy} \email{Andrea.Cangiani@sissa.it}
\author{Zhaonan Dong} \address{
	1) Inria, 2 rue Simone Iff, 75589 Paris, France, 
	and 2) CERMICS, Ecole des Ponts, 77455 Marne-la-Vall\'{e}e 2, France }  \email{Zhaonan.Dong@inria.fr}
\author{Emmanuil H.~Georgoulis} \address{
	1) School of Mathematics and Actuarial Science,
	University of Leicester,
	University Road,
	Leicester, LE1 7RH,
	United Kingdom, 
	2) Department of Mathematics, School of Applied Mathematical and Physical Sciences, National Technical University of Athens, Zografou 15780, Greece, and
	3) IACM-FORTH, Crete, Greece} \email{Emmanuil.Georgoulis@le.ac.uk}
\begin{document}
	
	\begin{abstract} 
		We extend the applicability of the popular interior-penalty discontinuous Galerkin (dG) method discretizing advection-diffusion-reaction problems to meshes comprising extremely general, essentially arbitrarily-shaped element shapes. In particular, our analysis allows for \emph{curved} element shapes, without the use of  {non-linear} elemental maps. The feasibility of the method relies on the definition of a suitable choice of the discontinuity penalization, which turns out to be  {explicitly dependent} on the particular element shape,  {but essentially independent on small shape variations}.  {This is achieved upon proving extensions of classical trace and Markov-type inverse estimates to arbitrary element shapes. A further new $H^1-L_2$-type inverse estimate on essentially arbitrary element shapes enables the proof of inf-sup stability of the method in a streamline-diffusion-like norm. These inverse estimates may be of independent interest.} A priori error bounds for the resulting method are given under very mild structural assumptions restricting the magnitude of the local curvature of element boundaries. Numerical experiments are also presented, indicating the practicality of the proposed approach. 
	\end{abstract}

	\maketitle
	
	\section{Introduction}

	Recent years have witnessed a coordinated effort to generalize mesh concepts in the context of Galerkin/finite element methods.  {A} key argument has been that more general-shaped elements/cells can potentially lead to computational complexity reduction. This effort has given rise to a number of recent approaches: mimetic finite difference methods \cite{MimeticBook2014}, virtual element methods \cite{VEM6,brenner_vem}, various discontinuous Galerkin approaches such as interior penalty \cite{book}, hybridized DG \cite{HDG2009} and the related \Rev{hybrid} high-order methods \cite{DIPIETRO20151}. Earlier approaches involving non-polynomial approximation spaces, such as polygonal and other generalized finite element methods \cite{Tabarraei:Sukumar:2004,Fries:Belytschko:2009}, have also been developed and used by the engineering community. All the above numerical frameworks allow for polygonal/polyhedral element shapes (henceforth, collectively termed as \emph{polytopic}) of varying levels of generality.

	{Simultaneously, various classes of fitted and unfitted grid methods for interface or transmission problems exploit generalized concepts of mesh elements in an effort to provide accurate representations of internal interfaces. 
		Several unfitted finite element methods have been proposed in recent years: unfitted finite element methods \cite{Barrett87}, \Rev{immersed} finite element methods \cite{MR3903560,MR2377272}, virtual element methods \cite{MR3606231}, unfitted penalty methods \cite{Engwer,Massjung,MR3668542,WX19,CGS18}, see also  \cite{Johansson2013} for unfitted discretization of the boundary, cutCell/cutFEM \cite{MR2899249,MR3416285}, and unfitted hybrid high-order methods \cite{MR3809538}, to name just the few closer to the developments we shall be concerned with below. A central idea in the majority of these methods is the weak imposition of interface conditions in conjunction with some form of penalization, see, e.g., \cite{HansboHansbo}, an idea going back to~\cite{Babuska70}. These approaches are often combined with level set concepts~\cite{MR1939127} to describe the interfaces accurately. Nonetheless, in their practical implementation, the interface is typically represented via piecewise smooth polynomial approximations to the level sets.
	}

	The interior penalty discontinuous Galerkin (IP-dG) approach appears to allow for  {extreme generality with regard to element shapes/geometries. Indeed, in contrast to aforementioned families of general mesh methods, IP-dG can handle arbitrary number of faces per element with solid theoretical backing involving provable stability and convergence results; see \cite{cangiani2017hp,book} for details. This property becomes extremely relevant upon realising that IP-dG (as well as other classical dG methods, such as LDG) associate local numerical degrees of freedom to the elements \emph{only}, and not to other geometrical entities such as faces or vertices. As such, the nature and dimension of the local discretization space is independent of the number of vertices/faces. The latter observation implies also naturally a form of complexity reduction: classical total degree (`$P-$type') local polynomial spaces in physical coordinates are admissible on box-type or highly complex element shapes \cite{cangiani2013hp,cangiani2015hp,book}. 
		We refer to our monograph \cite{book} for details on the admissible polygonal/polyhedral element shapes for which the IP-dG method is, provably, both stable and convergent. The mild element shape assumptions in \cite{book} are such to ensure the validity of crucial generalizations of standard approximation results, such as inverse estimates, best approximation estimates, and extension theorems. Thus, the developments presented for IP-dG in \cite{book} can be potentially ported also to other classical dG approaches within the unified framework of~\cite{ABCM}; we also note the recent static condensation approach presented in \cite{Lozinski} in this context.}
	
	{The question, therefore, of further extending rigorously the applicability of $hp$-version IP-dG methods to meshes consisting of \emph{curved} polygonal/polyhedral elements arises naturally. Indeed, such a development is expected to provide multifaceted advantages compared to current approaches, including, but not restricted to, the treatment of curved interfaces as done, e.g., in \cite{Massjung,MR3668542,WX19,MR3809538,CGS18}. For instance, allowing for extremely general curved elements enables the \emph{exact} representation of curved computational domains, e.g., arising directly from Computer Aided Design programs. Allowing also for arbitrary local polynomial degrees, provides the possibility of achieving required accuracy via local (polynomial) basis enrichment ($p$-version Galerkin approaches) without increased mesh-granularity. If, nonetheless, local mesh refinement is also required/desired, IP-dG methods can be immediately applied on refined curved elements without local re-parametrizations of the local Galerkin spaces. This is in contrast to the need to perform costly re-parametrisations upon mesh refinement in other approaches, e.g., Isogeometric Analysis \cite{hennig2017adaptive} or, indeed, even to keep track of the domain-approximation variational crimes of standard finite element discretizations. Exact geometry representation can also be highly relevant in representing locally discontinuous/sharply changing PDE coefficients, e.g., in permeability pressure computations in porous media, coefficients defined via level-sets of smooth functions, or shape/topology optimization applications.}
	
	{
		Furthermore, exact geometry representation is relevant in the $p$-version Galerkin context: to achieve spectral/exponential convergence for smooth PDE problems posed on general curved domains, we are required to use isoparametrically mapped elements. This is both cumbersome to implement and  costly as the polynomial degree increases \cite{murti1986numerical,murti1988numerical}. A successful alternative to isoparametric maps is the use of non-linear maps on element patches \cite{schwab,melenk_book} to represent domain geometry. Nevertheless, if the elemental maps are not \emph{a priori} provided, it is difficult to construct them in practice, especially in three dimensions. }
	
	{Finally, curved element capabilities should ideally be developed \emph{in conjunction} with the already developed highly general polytopic mesh IP-dG methods, allowing for instance elements with \emph{arbitrary} number of faces. This is particularly pertinent in the contexts of adaptivity and multilevel solvers, which benefit from element agglomeration \cite{hp_multigrid_polytopes_2017,Antonietti_2019} to achieve coarser representations. With regard to adaptivity, mesh coarsening is essential in keeping the computation sizes at bay, at least in the case of evolution problems. The extreme coarsening capabilities via element agglomeration, therefore, have the potential in retaining structure, e.g., possible coefficient heterogeneities at the discrete level for instance.} 
	
	{ It is, therefore, desirable to design and analyze IP-dG and related methods posed on meshes comprising of elements with \emph{arbitrary} number of \emph{curved} faces, under as mild geometric assumptions as possible. To address this central, in our view, question, this work aims at rigorously extending the applicability of IP-dG methods on meshes comprising of essentially arbitrarily \emph{curve-shaped} polytopic elements with arbitrary number of faces per element; this includes, in particular, curved elements \emph{not} exactly representable by (iso-)parametric polynomial element mappings.}

	{The theoretical developments presented below regarding stability and a-priori error analysis of IP-dG methods hinge on new, to the best of our knowledge, extensions of known inverse and trace inequalities. More specifically, we extend the $hp$-version trace inverse estimate presented in \cite{CGS18}, allowing for more general curved element shapes; see also \cite{MR3668542} for an earlier, related result.}  {Trace inverse estimates are crucial in the proof of stability of IP-dG methods and, simultaneously, determine the so-called discontinuity-penalization parameter for a given mesh. This is crucial on meshes of such generality: insufficient penalization results \Rev{in} loss of stability, while excessive penalization typically results \Rev{in} accuracy loss. Also, we prove new $hp$-version $L_\infty-L_2$ and $H^1-L_2$ inverse inequalities on extremely general curved domains. Particular care has been given so that these new inverse estimates are `shape-robust', in the sense that there is no hidden dependence of the element shape in the constants. We believe that these extensions of known inverse estimates to be of independent interest, due to their frequent use in the analysis of finite element methods. }
	
	{The new inverse estimates are combined with ideas from the analysis of polytopic dG methods \cite{book}, resulting in significant generalization of the results presented therein. More specifically, by relaxing certain earlier coverability assumptions, (postulating the ability to cover tightly general-shaped elements by unions of simplices of similar size, cf. \cite[Definition 10]{book}) as well as by proving a new stability result for norms of polynomials under domain perturbations (Lemma \ref{new_chop}), we prove stability and a new $hp$-version a priori error analysis for the IP-dG method on essentially arbitrary element shapes. The a priori error analysis follows closely the proof from \cite{cangiani2015hp}: upon establishing an inf-sup stability result of the method in a streamline-diffusion-like norm, standard Strang-type arguments with $hp$-best approximation results lead to an error bound. The inf-sup result justifies also the good stability properties of the method in convection-dominated problems.  The theoretical tools presented may also be of interest in Nitsche-type formulations of unfitted grid interface methods.} To  {emphasize} the mesh-generality of the proposed approach, we shall refer to the framework presented below as \emph{discontinuous Galerkin method on essentially arbitrarily-shaped elements (dG-EASE)}.

	The remainder of this work is organised as follows. Upon describing the advection-diffusion-reaction model problem in Section \ref{Problem}, we introduce the $hp$-version interior penalty discontinuous Galerkin method in Section \ref{sec:dg}.  We prove new inverse estimates in Section \ref{sec:approx}, along with the necessary $hp$-approximation results. In Section \ref{sec:aprior}, we present the stability  and a-priori error analysis. Finally, the performance of the dG methods is assessed in practice through a series of numerical experiments presented in Section \ref{numerical example}.

	\section{Model problem} \label{Problem}
	To highlight the versatility of dG-EASE, we consider the class of second--order partial differential equations with 
	nonnegative characteristic form over an open bounded Lipschitz domain $\Omega$ in $\mathbb{R}^d$, $d\geq 1$, with boundary $\partial\Omega$. This class includes general advection-diffusion-reaction problems possibly of changing type, see, e.g.,~\cite{book}. The model problem reads:
	find $u\in\mathcal{V}$ such that
	\begin{equation}\label{pde}
		\begin{aligned}
			-\nabla \cdot (a \nabla u)
			+ \nabla \cdot(\vecb u) + cu =& f ~~\mbox{ in } \Omega,
		\end{aligned}
	\end{equation}
	for some suitable solution space $\mathcal{V}$, and $a = \left\{ a_{ij} \right\}_{i,j=1}^d$, symmetric with
	$a_{ij} \in L_{\infty}(\Omega)$, so that at each $\uu{x}$ in $\bar\Omega$, we have
	\begin{equation}\label{eq:anonneg}
		\begin{aligned}
			\sum_{i,j=1}^d a_{ij}(\uu{x})\xi_i \xi_j \geq 0, \qquad \text{for any}\quad {\bf\xi} = (\xi_1,\dots,\xi_d)^T\in\mathbb{R}^d;
		\end{aligned}
	\end{equation}
	also $\vecb =(b_1,\ldots,b_d)^T \in \left[ W^{1}_{\infty}(\Omega) \right]^d$,
	$c \in L_{\infty}(\Omega)$ and $f\in L_2(\Omega)$.
	
	To supplement \eqref{pde} with suitable boundary conditions,
	following \cite{or73}, we first subdivide the boundary $\partial\Omega$ into 
	$
	\partial_0\Omega = \Big\{ \uu{x} \in \partial\Omega : \sum_{i,j=1}^d a_{ij}(\uu{x}) n_i n_j > 0
	\Big\}$, 
	and $\partial\Omega\backslash \partial_0\Omega$ with $\vecn = (n_1,\ldots,n_d)^T$ denoting the unit
	outward normal vector to $\partial\Omega$. Loosely speaking, we may think of $\partial_0\Omega$
	as being the `elliptic' portion of the boundary $\partial\Omega$.
	We further split the `hyperbolic' portion of the boundary $\partial\Omega\backslash\partial_0\Omega$,
	into inflow and outflow boundaries $\partial_-\Omega$ and $\partial_+\Omega$, 
	respectively, by
	\begin{equation*}
		\partial_-\Omega = \left\{  \uu{x} \in \partial\Omega\backslash\partial_0\Omega : \vecb(\uu{x}) 
		\cdot \vecn (\uu{x})< 0 \right\} ,\   
		\partial_+\Omega =\left\{ \uu{x} \in \partial\Omega\backslash\partial_0\Omega : \vecb(\uu{x}) 
		\cdot \vecn (\uu{x}) \geq 0 \right\}. 
	\end{equation*}
	If $\partial_0\Omega$ is nonempty, we shall further divide it into two disjoint subsets 
	$\partial\Omega_{\ddd}$ and $\partial\Omega_{\dn}$, with 
	$\partial\Omega_{\ddd}$ nonempty and relatively open in $\partial\Omega$. It is evident
	from these definitions that 
	$\partial\Omega=\partial\Omega_{\ddd}\cup\partial\Omega_{\dn}\cup\partial_-\Omega\cup\partial_+\Omega$. 
	
	It is physically reasonable to assume that $\vecb \cdot \vecn \geq 0$ on
	$\partial\Omega_{\dn}$, whenever $\partial\Omega_{\dn}$ is nonempty; then, we impose
	the boundary conditions:
	\begin{equation}
		\begin{aligned}
			u = g_{\ddd}^{} ~~~\text{ on }\partial\Omega_{\ddd}\cup\partial_-\Omega, \qquad
			{\bf n} \cdot (a\nabla u) =g_{\dn} ~~~\text{ on }\partial\Omega_{\dn}; \label{pde_bcs}\end{aligned}
	\end{equation}
	For an extension, allowing also for  $\vecb \cdot \vecn < 0$ 
	on $\partial\Omega_{\dn}$, we refer to~\cite{CGJ}. Additionally, we assume that there exists a positive constant $\gamma_{0}$ such that
	\begin{equation}\label{assumption-cb}
		c_0(\uu{x}):=\Big(c(\uu{x})+\frac{1}{2}\nabla\cdot \vecb(\uu{x})\Big)^{1/2}\geq \gamma_0 \quad \text{ a.e. } x\in\Omega.
	\end{equation} 
	For a proof of the well--posedness of \eqref{pde}, \eqref{pde_bcs}, subject to \eqref{assumption-cb}, we refer to \cite{or73,houston2001stabilised}.

	\section{Discontinuous Galerkin method}\label{sec:dg}
	We shall now define the interior penalty discontinuous Galerkin (dG) method posed on essentially arbitrarily-shaped elements. A key attribute of the method is the use of physical frame basis functions, i.e., the elemental bases consist of polynomials on the elements themselves, rather than mapped from a reference element. The implementation challenges arising from this non-standard choice with regard to construction of the resulting linear system will be discussed below.
	
	\subsection{The mesh} 
	Let $\mathcal{T}=\{ K\} $ be a subdivision of $ \Omega$ into non-overlapping subsets (elements) $ K\in\mathcal{T}$ with, possibly curved, Lipschitz boundaries and let $h_K:=\diam(K)$.  The mesh skeleton $ \Gamma:=\cup_{K\in\mathcal{T}}\partial K $ is subdivided into the internal part $\Gamma_{\dint}:=\Gamma\backslash  \partial\Omega $ and boundary part $\partial\Omega$. We further explicitly assume that the $(d-1)$-dimensional Hausdorff measure of $\Gamma$ is globally finite, thereby, not allowing for fractal-shaped elements.
	
	We note immediately that we allow mesh elements $K\in\mathcal{T}$ which are essentially arbitrarily-shaped and with very general interfaces with neighbouring elements. For instance, two elements may  {interface at a collection of $(d-1)$-dimensional (possibly curved) \emph{faces},}
	as those shown in Figure~\ref{fig:threeelements}.
	The precise assumptions on the admissible element shapes are given  {in Section \ref{sec:approx}} below.
	
	\begin{figure}[h]
		\centering
		\includegraphics[height=3cm,width=8cm]{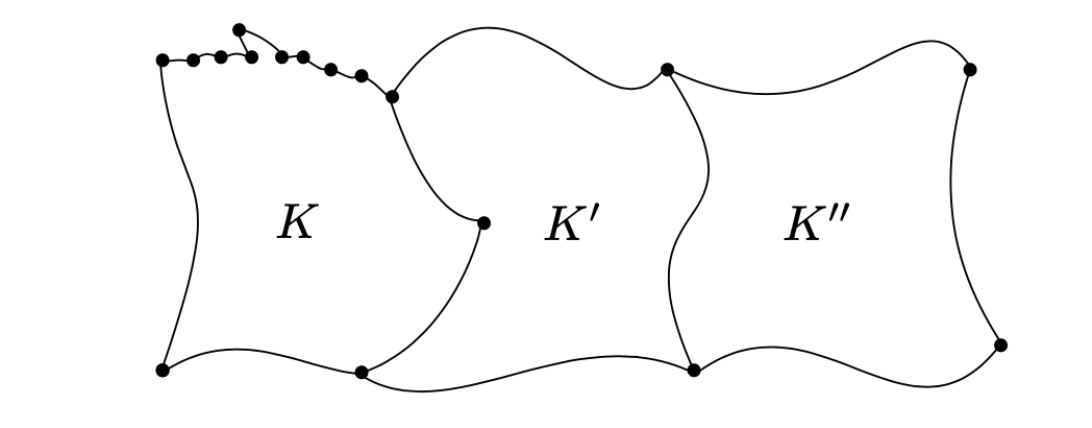}	
			\caption{Curved elements $K,K',K''$ for $d=2$ with possibly many curved faces; \vx denotes a vertex.}
		\label{fig:threeelements}
	\end{figure}

	\subsection{Discontinuous Galerkin method}
	
	We define the $hp$-version discontinuous finite element space $ \fes$, subordinate to the mesh $\mathcal{T}=\{K\}$ and a polynomial degree vector $\vecp:=\{p_K\}$, possibly different for each element $K$, by
	\begin{equation}
		\fes=\{  v\in L^{2}(\Omega):v\vert_K\in\mathcal{P}_{p_K}(K),\ K\in\mathcal{T}\}.
	\end{equation} 
	
	
	For any elemental face $F\subset \Gamma_\text{int}$, let $K$ and $K'$ be the two elements such that $F\subset\partial K\cap\partial K'$. The outward unit normal vectors on $F $ of $ \partial K $ and $ \partial K' $ are denoted by $ \mathbf{n}_{K} $ and $ \mathbf{n}_{K'} $, respectively. For a function $ v:\Omega\rightarrow \mathbb{R} $ that may be discontinuous across $ \Gamma $, we define the jump  $\llbracket v \rrbracket $ and  the average $ \lbrace v\rbrace $ of $ v $  across $F$ by
	\begin{equation}
		\label{equ3}
		\llbracket v \rrbracket = v\vert_{K} \mathbf{n}_{K}+v\vert_{K'} \mathbf{n}_{K'}, \quad\lbrace v\rbrace=\frac{1}{2}\left( v\vert_{K}+v\vert_{K'}\right).        
	\end{equation}
	Similarly, for a vector valued function $\mathbf{w}$, piecewise smooth on $\mathcal{T}$, we define 
	\begin{equation*}
		\llbracket \mathbf{w} \rrbracket = \mathbf{w}\vert_{K}\cdot \mathbf{n}_{K}+\mathbf{w}\vert_{K'}\cdot \mathbf{n}_{K'}, \quad \lbrace \mathbf{w}\rbrace=\frac{1}{2}\left( \mathbf{w}\vert_{K}+\mathbf{w}\vert_{K'}\right).
	\end{equation*} When $ F\subset \partial\Omega$, we set $ \lbrace v\rbrace=v$, $\llbracket v \rrbracket =v\mathbf{n} $ and $\llbracket \mathbf{w} \rrbracket=\mathbf{w}\cdot\mathbf{n}$ with $\mathbf{n}$ denoting the outward unit normal to the boundary $\partial\Omega$.

	For any element $K\in \mathcal{T}$,  we define  inflow and outflow parts of $\partial K$ by
	\begin{eqnarray*}
		\partial_{-}K=\{\uu{x}\in \partial K:\  \bold{b}(\uu{x})\cdot{\bold{n}_K(\uu{x})}<0\} ,  \quad \partial_{+}K=\{\uu{x}\in \partial K: \bold{b}(\uu{x})\cdot{\bold{n}_K(\uu{x})} \geq 0\},
	\end{eqnarray*}
	respectively, with $\bold{n}_K(\uu{x})$ denoting the unit outward normal vector to $\partial K$ 
	at $\uu{x} \in \partial K$. 
	Further, we define the \emph{upwind jump} of the (scalar-valued) function $v$ across the inflow boundary  $\partial_-K $ of $K\in\mathcal{T}$ by
	\begin{equation*}
		\ujump{v}(\uu{x}):=\lim_{\epsilon\to 0^+}\Big(v(\uu{x}+\vecb(\uu{x})\epsilon)-v(\uu{x}-\vecb(\uu{x})\epsilon)\Big),\quad\text{when } \uu{x}\in\partial_-K \backslash \partial\Omega.
	\end{equation*}

	Finally, we define the broken gradient $\bgrad v$ of a function $v\in L_2(\Omega)$ with $v|_K\in H^1(K)$, for all $K\in\mathcal{T}$,  element-wise by $\big(\bgrad v\big)|_K := \nabla (v|_K)$.

	The \emph{discontinuous Galerkin method on essentially arbitrarily-shaped elements} (dG-EASE for short) reads: find $u_h\in \fes$ such that
	\begin{equation}\label{adv_galerkin_dg}
		B(u_h,v_h)=\ell(v_h) \qquad\text{for all }\quad v_h\in \fes,
	\end{equation}
	with the bilinear form $B(\cdot, \cdot ):\fes\times \fes\to \mathbb{R}$ defined as \begin{equation*}\label{adv-dg-bilinear}
		B(u,v) := B_{\rm ar}(u,v)+ B_{\rm d}(u,v) ,
	\end{equation*}
	where $B_{\rm ar}(\cdot, \cdot )$ accounts for the advection and reaction terms:
	\begin{equation}\label{advection_bilinear}
		\begin{aligned}
			B_{\rm ar}(u,v) :=& \int_\Omega \big(  \bgrad(\vecb \cdot  u) +cu  \big)v \ud \uu{x}  -\sum_{K\in\mathcal{T}}
			\int_{\partial_-K \backslash \partial\Omega}(\bold{b} \cdot  \bold{n})\ujump{u}v\ud s\\
			&-\sum_{K\in\mathcal{T}}\int_{\partial_-K \cap(\partial\Omega_{\rm D} \cup \partial_-\Omega)}(\bold{b} \cdot \bold{n})uv\ud s,
		\end{aligned}
	\end{equation}
	and $B_{\rm d}(\cdot, \cdot )$ corresponds to the diffusion part: 
	\begin{equation}\label{diffusion_bilinear}
		\begin{aligned}
			B_{\rm d}(u,v) :=&  \int_\Omega  a\bgrad u \cdot \bgrad v \ud \uu{x} +\int_{\Gamma_\text{int}\cup \partial\Omega_\text{D}} \!\!\! \sigma \jump{u} \cdot \jump{v} \ud{s} \\ 
			&- \int_{\Gamma_\text{int}\cup \partial\Omega_\text{D}} \Big( \mean{a \nabla u }\cdot \jump{v} + \mean{a \nabla v }\cdot \jump{u} \Big) \ud s,
		\end{aligned}
	\end{equation}
	while the linear functional $\ell:\fes\to\mathbb{R}$ is defined by
	\begin{equation} \label{adv-dg-linear}
		\begin{aligned}
			\ell(v)
			:=& \int_{\Omega} f  v \ud \uu{x}
			-\sum_{K\in\mathcal{T}}\int_{\partial_-K \cap(\partial\Omega_{\rm D} \cup \partial_-\Omega)}\!\!\!(\bold{b} \cdot \bold{n}) g_{\rm D}^{}v\ud s  \\
			&-\int_{\partial\Omega_{\rm D}} g_{\rm D}^{} \big( (a \nabla v ) \cdot \bold{n} -\sigma v \big)\ud s  +\int_{\partial\Omega_{\rm N}} g_{\rm N}^{}v \ud s.
		\end{aligned}
	\end{equation}
	The nonnegative function  $\sigma \in L_\infty(\Gamma_{\rm int} \cup \partial\Omega_{\rm D})$ appearing in~\eqref{diffusion_bilinear} and~\eqref{adv-dg-linear} is the \emph{discontinuity-penalization  {function}}, whose precise definition, which depends on the diffusion tensor $a$ and the discretization parameters,  will be given below. We note that a `good' choice of discontinuity penalization is instrumental for the stability of the method, while simultaneously not affecting the approximation properties in the general mesh setting considered herein. 
	
	For simplicity of presentation, we shall assume that the entries of the diffusion tensor $a$ are element-wise constants on each element $K\in \mathcal{T}$, i.e.,
	\begin{equation}\label{assumption_a}
		a\in [S^{\bold{0}}_\mathcal{T}]^{d\times d}_{\rm sym},
	\end{equation}
	Our results can \Rev{ be applied} to the case of general $a\in [H^{1/2}(\Omega)]^{d\times d}_{\rm sym}$
	by slightly modifying the bilinear form above as proposed originally in \cite{georgoulis2006note} and extended to polytopic meshes in \cite{book}. In the following, $\sqrt{a}$ denotes the (positive semidefinite) square-root of the symmetric matrix $a$; further, $\bar{a}_K:=|\sqrt{a}|_2^2 |_K$, where $|\cdot |_2$ denotes the matrix-$2$--norm. 
	{Also, in the interest of accessibility, we shall not consider problems with high contrast diffusion tensors, with the usual weighted averaging modification of the method ~\cite{MR2257119,MR2491426,MR2383212}; the extension to that setting is completely analogous to the analysis presented below.}
	{
		\begin{remark}
			The parameter $\sigma$ is typically selected to be face-wise constant in the definition and implementation of IP-dG methods. To ensure that only physically correct penalization takes place, $\sigma$ is chosen below to be proportional to the quantity ${\bf n}^Ta{\bf n}$; see \cite{georgoulis2006note} for details. As such, $\sigma$ will vary along a curved element face even for element-wise constant diffusion $a$, thereby justifying the terminology ``penalization function'' as opposed to the standard terminology ``penalization function'' from the literature. Further, the theory presented below \Rev{can also be extended with minor modifications to} curved faces $F$, such that ${\bf n}^Ta{\bf n}>0$ only on a strict subset of that face and ${\bf n}^Ta{\bf n}=0$ on the remaining part.  That way one can reduce or even remove unphysical penalization on the hypersurfaces where the PDE may change type.
		\end{remark}
	}

	\section{Inverse and approximation estimates}\label{sec:approx}
	
	A key challenge in the error analysis presented below is the availability of inverse estimation and approximation results with uniform {/explicit} constants with respect to the shape of the elements in a given mesh.  
	
	A trace type inverse estimate for elements with one curved face has been recently proven in \cite{CGS18}  {under a shape-regularity assumption; see \eqref{stronger_assumption} below. Results in this direction have also appeared under various geometric assumptions in \cite{WX19,Massjung, MR3809538}, among others. Here, we extend these results by proving trace-inverse estimates for elements that are locally star-shaped, Lipschitz domains (see Assumption \ref{assumption_mesh} below). Moreover, given the importance of trace-inverse estimates for the stability of interior penalty dG methods, the new estimate constant is expressed via explicit and practically verifiable, geometric quantities (Lemma \ref{curved_inv_est} below).}  
	
	{In the same vein, we also extend the classical (Markov-type) $H^1-L_2$ inverse estimate to elements with piecewise $C^1$, locally star-shaped boundaries (see Assumptions \ref{assumption_mesh} and \ref{ass:c1} below). The proof builds upon and extends on earlier ideas from \cite{Kroo1} and \cite{book}. Here, we are particularly concerned with explicit quantification of the respective constant for a given element geometry. We note that $H^1-L_2$ inverse estimates are also relevant in the determination of penalty parameters in IP-dG methods for biharmonic operators \cite{DGpolyBiharmonic}.}
	
	{Also, we revisit a key stability argument that enabled the use of `degenerate' polytopic element shapes, i.e., ones containing very small/degenerating faces/edges compared to the element diameter, first proposed in \cite{cangiani2013hp}; see also \cite{cangiani2015hp,book} for improvements. This result is crucial in offering a practical choice of the discontinuity-penalization parameter for general polytopic meshes. The stability argument was based on two ingredients: 1) control of integral norms of polynomials with respect to domain perturbations using \cite[Lemma 3.7]{G08}, and 2) an $L_\infty-L_2$ inverse estimate. To retain this capability in the current setting, we prove an extension of \cite[Lemma 3.7]{G08} (see also \cite[Lemma 6]{Kroo1} for a related result) for generalized/curved prismatic elements; see Lemma \ref{new_chop} below. Moreover, we also prove an extension of the classical $L_\infty-L_2$ inverse estimate for generalized/curved prismatic elements. The latter two new estimates, in conjunction with a revised concept of \emph{coverability} (compared to \cite{cangiani2013hp,book}) are enough to provide extensions to previously known stability results for IP-dG within the present level of mesh generality.
	}

		\begin{figure}[h]
		\centering
		\includegraphics[height=3.5cm,width=8cm]{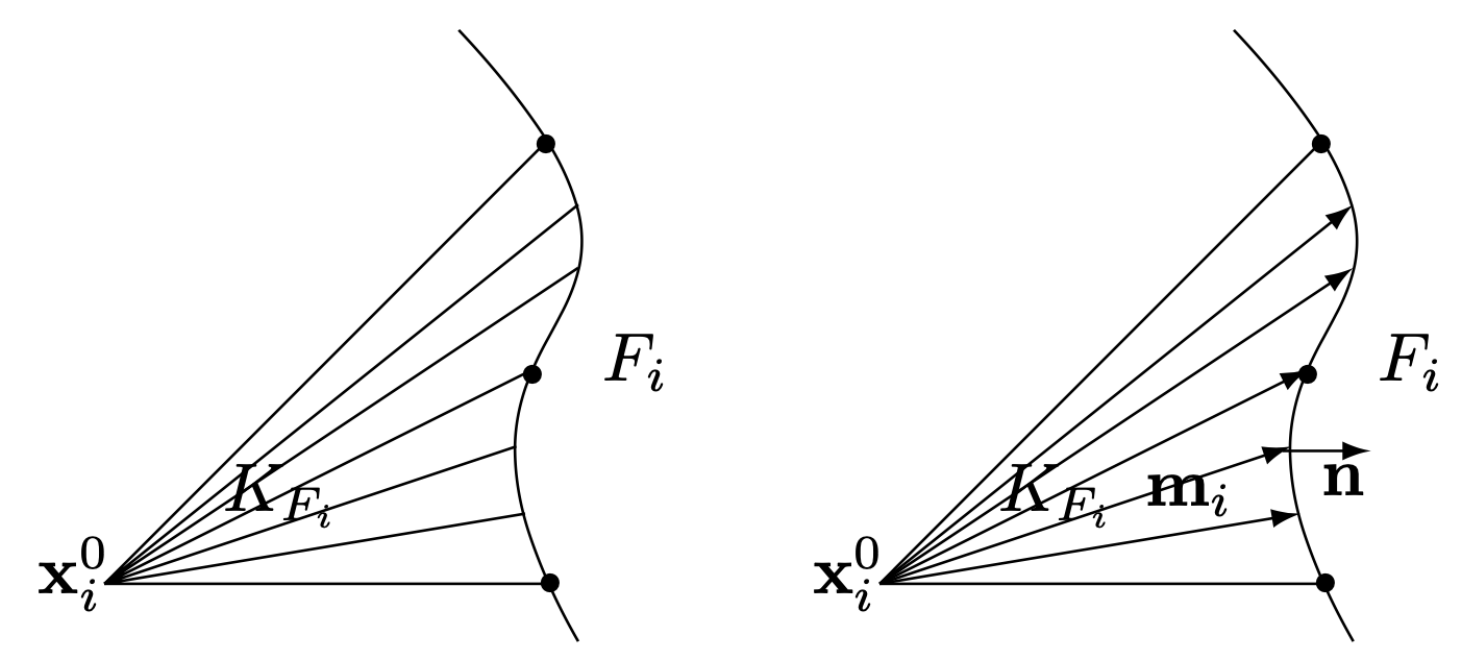}	
			\caption{Elements $K\in\mathcal{T}^{tr}$ are assumed to satisfy Assumption \ref{assumption_mesh} (a) (left) and (b) (right); \vx denotes a vertex.}
		\label{fig:curved_star-shaped}
	\end{figure}

%
%
%
%
%
%
%
%
%
%
%
%
%

				\begin{figure}[t]
			\centering
			\includegraphics[height=3.5cm,width=8cm]{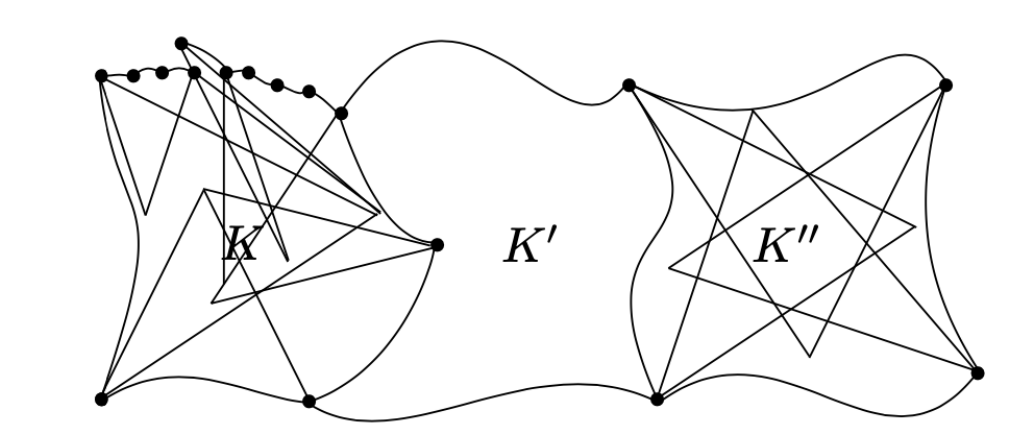}	
		\caption{Curved elements $K,\ K''$  with, respectively, $8$ and $4$ sub-elements satisfying Assumption \ref{assumption_mesh}.}
	\label{fig:curved_star-shaped_many_KFI}
		\end{figure}
	\begin{assumption}\label{assumption_mesh}
		For each element $K\in \mathcal{T}$, we assume that  {$K$ is a Lipschitz domain, and that we can subdivide} $\partial K$ into
		mutually exclusive subsets
		$\{F_i\}_{i=1}^{n_{K}^{}}$ satisfying the following property: there exist respective sub-elements $K_{F_i} {\equiv K_{F_i}({\bf x}^0_i)}\subset K$ with $d$ planar faces meeting at one vertex ${\bf x}^0_i\in K$,  {with $F_i\subset \partial K_{F_i}$}, such that, for $i=1,\dots,n_K^{}$, 
		\begin{itemize}
			\item[(a)]   $K_{F_i}$ is star-shaped with respect to ${\bf x}^0_i$. 
			We refer to Figure \ref{fig:curved_star-shaped}(left) for an illustration for $d=2$; 
			
			\item[(b)] $\m({\bf x})\cdot{\bf n}({\bf x})>0$ for $\m({\bf x}):={\bf x} - {\bf x}_i^0$,  ${\bf x}\in K_{F_i}$, and ${\bf n}({\bf x})$ the respective unit outward normal vector to $F_i$ at ${\bf x}\in F_i$. (We refer to Figure \ref{fig:curved_star-shaped}(right) for an illustration for $d=2$.)

		\end{itemize}
	\end{assumption}
	
	\begin{remark}\label{remark:comments}
		Some remarks on the above (very mild) mesh assumption are in order:
		\begin{itemize}
			\item[(i)] The sub-domains  $\{F_i\}_{i=1}^{n_{K}^{}}$ are \emph{not} required to coincide with the faces of the element $K$: each $F_i$ may be part of a face or may include one or more faces of $K$.  {Also, there is \emph{no} requirement for $\{n_K\}_{K\in\mathcal{T}}$ to remain uniformly bounded across the mesh.} 
			
			\item[(ii)]  We can make Assumption \ref{assumption_mesh}(b) stronger by further postulating that: it is possible to fix the point ${\bf x}^0_i$ such that there exists a global constant $c_{sh}>0$, such that 
			\begin{equation}\label{stronger_assumption}
				\m({\bf x})\cdot{\bf n}({\bf x})\ge c_{sh} h_{K_{F_i}}
				;
			\end{equation} this is the case, of course, for straight-faced polytopic elements, cf., \cite{CGS18,WX19}. Note that \eqref{stronger_assumption} does \emph{not} imply shape-regularity of the $K_{F_i}$'s; in particular $K_{F_i}$'s with `small' $F_i$ compared to the remaining (straight) faces of $K_{F_i}$ are acceptable. Such anisotropic sub-elements $K_{F_i}$'s may be necessary to ensure that each $K_{F_i}$ remains star-shaped when an element boundary's curvature is locally large; see, e.g.,  {$K_{F_i}$ in Figure \ref{fig:curved_star-shaped} and a collection of both `shape-regular' and `anisotropic' $K_{F_i}$'s in Figure \ref{fig:curved_star-shaped_many_KFI}}.
			
			\item[(iii)]  On certain geometrically extreme cases, satisfying Assumption \ref{assumption_mesh} may require a small number of refinements of the elements $K\in\mathcal{T}$ of a given initial mesh. 
			{\item[(iv)] $F_i$ is not required to be connected. However, by splitting $F_i$ to its connected subsets, re-indexing the $F_i$'s to correspond to unique $K_{F_i}$, we can allocate one $K_{F_i}$ to each $F_i$; we shall take the latter point of view in what follows to avoid further notational complexity.}
			\qed
		\end{itemize}
	\end{remark}
	{\begin{assumption}\label{ass:c1}
			We assume that the boundary $\partial K$ of each element $K\in\mathcal{T}$ is the union of a finite (yet, arbitrarily large!) number of closed $C^1$ surfaces.
		\end{assumption}
		Assumption \ref{assumption_mesh} is sufficient for the proof of the trace estimates presented below. Requiring both Assumptions \ref{assumption_mesh} and \ref {ass:c1} is sufficient for the validity of the $H^1-L_2$ inverse estimate presented below. 
	}
	
	\subsection{ {Basic trace estimates}}
	{We now discuss the new trace-inverse estimate and a version of the standard Sobolev trace estimate for Lipschitz domains satisfying Assumption \ref{assumption_mesh}.}

	\begin{lemma}\label{curved_inv_est}
		Let element $K\in\mathcal{T}$  {be a Lipschitz domain satisfying} Assumption \ref{assumption_mesh}. Then, for each $F_i\subset \partial K$, $i=1,\dots, n_K$, and for each $v\in\mathcal{P}_{p}(K)$, we have the inverse estimate:
		\begin{equation}\label{eq:inv_gen}
			\|v\|_{F_i}^2\le\frac{(p+1)(p+d)}{\displaystyle \min_{\mathbf{x}\in F_i}(\m\cdot\mathbf{n})}  \|v\|_{K_{F_i}}^2.
		\end{equation}
	\end{lemma}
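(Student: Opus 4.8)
The plan is to test the square $v^2$ against the radial vector field $\m(\mathbf{x}):=\mathbf{x}-\mathbf{x}_i^0$ based at the apex $\mathbf{x}_i^0$ of the sub-element $K_{F_i}$ (a simplex with its ``base'' replaced by the curved patch $F_i$), and then, after passing to spherical coordinates centred at $\mathbf{x}_i^0$, to reduce the resulting identity to a sharp one-dimensional Jacobi-weighted polynomial inverse estimate along rays emanating from $\mathbf{x}_i^0$. Since each of the $d$ planar faces of $K_{F_i}$ passes through $\mathbf{x}_i^0$, the field $\m$ is tangent to them, so the divergence theorem applied to $v^2\m$ on $K_{F_i}$ (using $\nabla\cdot\m=d$) contributes nothing on those faces, and one gets
\[
\Big(\min_{\mathbf{x}\in F_i}(\m\cdot\mathbf{n})\Big)\|v\|_{F_i}^2 \;\le\; \int_{F_i} v^2\,(\m\cdot\mathbf{n})\ud s \;=\; \int_{K_{F_i}}\big(d\,v^2+2\,v\,\m\cdot\nabla v\big)\ud\mathbf{x},
\]
the first inequality using $\m\cdot\mathbf{n}>0$ on $F_i$ from Assumption \ref{assumption_mesh}(b). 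It therefore remains to bound the right-hand side by $(p+1)(p+d)\,\|v\|_{K_{F_i}}^2$.

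For the latter I would pass to spherical coordinates $\mathbf{x}=\mathbf{x}_i^0+r\boldsymbol{\omega}$. By the star-shapedness of $K_{F_i}$ (Assumption \ref{assumption_mesh}(a)), for every direction $\boldsymbol{\omega}$ in the solid-angle sector $\Sigma$ subtended by $K_{F_i}$ the radial slice is a single interval $(0,R(\boldsymbol{\omega}))$ with endpoint on $F_i$, so $\|v\|_{K_{F_i}}^2=\int_\Sigma\int_0^{R(\boldsymbol\omega)}v^2\,r^{d-1}\ud r\ud\boldsymbol\omega$. Using $\m\cdot\nabla v=r\,\partial_r v$ and $\big(d v^2+2r v\,\partial_r v\big)r^{d-1}=\partial_r\big(r^d v^2\big)$, the fundamental theorem of calculus applied ray by ray shows that the volume integral above equals $\int_\Sigma R(\boldsymbol\omega)^d\,v(\mathbf{x}_i^0+R(\boldsymbol\omega)\boldsymbol\omega)^2\ud\boldsymbol\omega$. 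Hence everything reduces to the ray-wise inequality $R^d q(R)^2\le(p+1)(p+d)\int_0^R q(r)^2 r^{d-1}\ud r$ for $q\in\mathcal{P}_p([0,R])$ (take $q(r)=v(\mathbf{x}_i^0+r\boldsymbol\omega)$, then integrate over $\Sigma$). After the affine rescaling $r=Rs$ this is the classical sharp bound $q(1)^2\le(p+1)(p+d)\int_0^1 q(s)^2 s^{d-1}\ud s$, $q\in\mathcal{P}_p([0,1])$: writing $\{\phi_k\}_{k=0}^p$ for the orthonormal polynomial basis of $L_2\big((0,1);s^{d-1}\ud s\big)$ (normalised shifted Jacobi polynomials $P_k^{(0,d-1)}$), Cauchy--Schwarz against the reproducing kernel gives $q(1)^2\le\big(\sum_{k=0}^p\phi_k(1)^2\big)\|q\|^2$, and $P_k^{(0,d-1)}(1)=1$ together with the standard Jacobi norm identity yields $\phi_k(1)^2=2k+d$, so $\sum_{k=0}^p\phi_k(1)^2=\sum_{k=0}^p(2k+d)=(p+1)(p+d)$.

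I expect the main obstacle to be the geometric bookkeeping in the reduction step, not any delicate analysis: under the very weak hypotheses in force (only Lipschitz regularity, $F_i$ possibly curved and possibly disconnected, and no shape-regularity of $K_{F_i}$) one must justify that the radial projection identifies $F_i$ with the sector $\Sigma\subset S^{d-1}$ up to a set of directions of measure zero on $S^{d-1}$ — precisely those lying in the union of the planar faces of $K_{F_i}$ — so that the surface identity $\int_{F_i}v^2(\m\cdot\mathbf{n})\ud s=\int_\Sigma v^2 R^d\ud\boldsymbol\omega$ and the volume slicing are both legitimate. The positivity $\m\cdot\mathbf{n}>0$ on $F_i$ and the star-shapedness of $K_{F_i}$ in Assumption \ref{assumption_mesh} are exactly what exclude the degenerate configurations in which this fails; once they are in place, the sharp constant $(p+1)(p+d)$ is inherited verbatim from the one-dimensional Jacobi bound, as in the straight-faced simplicial and single-curved-face cases treated in \cite{CGS18}.
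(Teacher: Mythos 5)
Your proof is correct, but it takes a genuinely different route from the paper's. The paper partitions $F_i$ into $r$ small curved simplicial pieces $F_i^j$, inscribes in each cone $K_i^j$ a straight-faced simplex $\underline{K}_i^j$, invokes the known trace-inverse estimate of \cite{warburton2003constants} on those flat simplices as a black box, and recovers \eqref{eq:inv_gen} by a limiting argument $r\to\infty$ that requires controlling the discrepancy terms $|K_i^j\setminus\underline{K}_i^j|$ (this is where the Lipschitz character of the faces is used, to ensure these volumes vanish faster than $|\tilde F_i^j|$). You instead apply the divergence theorem to $v^2\m$ once on the whole of $K_{F_i}$, exploit that $\m\cdot\mathbf{n}=0$ on the planar faces through $\uu{x}_i^0$, and then evaluate the resulting volume integral exactly in spherical coordinates via $\big(d\,v^2+2rv\,\partial_r v\big)r^{d-1}=\partial_r(r^dv^2)$, reducing everything to the one-dimensional reproducing-kernel bound $q(1)^2\le\big(\sum_{k=0}^p(2k+d)\big)\int_0^1q^2s^{d-1}\ud s=(p+1)(p+d)\|q\|^2$ for the weight $s^{d-1}$ — which is, in essence, how the constant of \cite{warburton2003constants} is derived for simplices in the first place. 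Your Jacobi normalisation and the telescoping sum $\sum_{k=0}^p(2k+d)=(p+1)(p+d)$ check out, and specialising to a flat face recovers the Warburton--Hesthaven constant exactly, so the two proofs yield the same bound. What your approach buys is the elimination of the approximation-and-limit machinery (no $\epsilon$, $\delta$, or $r\to\infty$), at the price of the measure-theoretic bookkeeping you correctly flag: one must note that the radial projection identifies $\partial K_{F_i}$ bijectively with the solid angle $\Sigma$ (star-shapedness forces exactly one boundary point per ray), and that the directions exiting through the planar faces lie in finitely many hyperplanes through the origin, hence form a null set of $S^{d-1}$, while the exceptional set where the Lipschitz surface $F_i$ has no normal is likewise null. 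With those standard facts in place your argument is complete and arguably cleaner; the paper's route has the advantage of reusing an off-the-shelf simplicial estimate and of being structurally parallel to its proof of the $H^1$ trace bound in Lemma \ref{lem:trace}.
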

	\begin{proof}
		We partition $F_i$ into $r$ $(d-1)$-dimensional curved simplices denoted by $F_i^j$, $j=1,\dots,r$, which are subordinate to the vertices possibly contained in $F_i$; $r$ is large enough to accommodate this requirement. Further, we construct a partition of $K_{F_i}$ into (curved) sub-elements $K_i^j$, by considering the simplices with 
		one (curved) face $F_i^j$ and the remaining vertex being $\uu{x}_i^0$; this is possible due to the star-shapedness of $K_{F_i}$ with respect to $\uu{x}^0_i$ as per Assumption \ref{assumption_mesh}(a). We refer to Figure \ref{fig:curved_inv_est} for an illustration when $d=2$. Notice that each $F_i^j$ may include at most one constituent curved face of $F_i$, or part thereof.

		\begin{figure}[h]
			\centering
			\includegraphics[height=3cm,width=10cm]{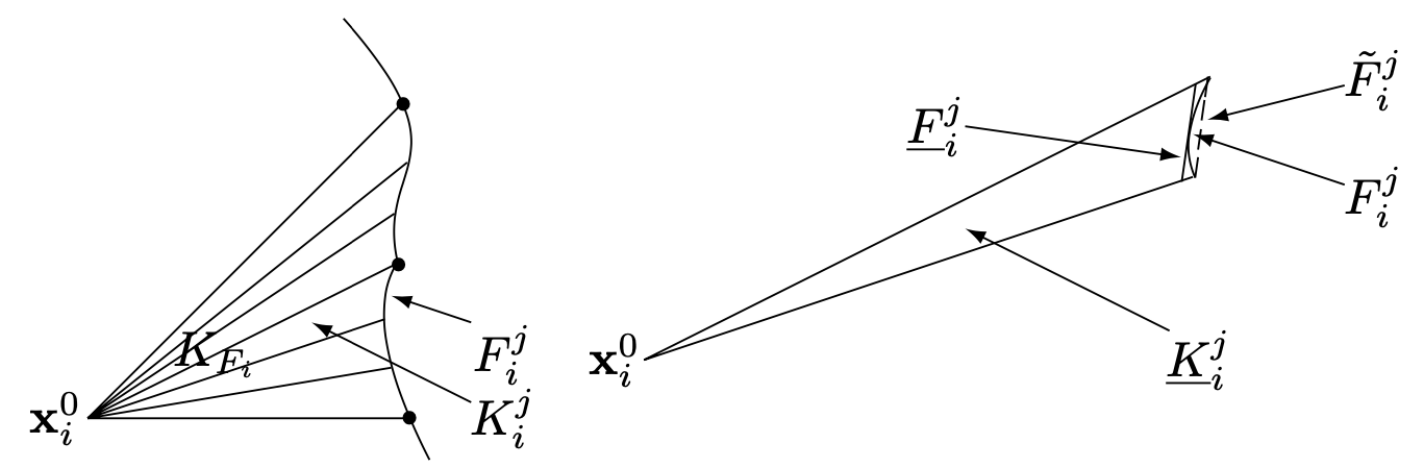}	
		
			\caption{Partitioned curved sub-element $K_{F_i}\subset K\in\mathcal{T}$; \vx denotes a vertex of $K$ (left); detail with $\underline{K}_i^j$ and related faces $\tilde{F}_i^j$ and $\underline{F}_i^j$ (right).}
		\label{fig:curved_inv_est}
		\end{figure}

		Let now $\tilde{F}_i^j$ denote the straight/planar related face defined by the $d-1$ vertices of $F_i^j$. Let also $\underline{K}_i^j$ be the largest straight-faced simplex contained in $K_i^j$ with face $\underline{F}_i^j$ parallel to $\tilde{F}_i^j$ and the remaining faces being subsets of the straight faces of $K_i^j$. The Divergence Theorem implies
		\[
		\begin{aligned}
			\int_{K_i^j\backslash  \underline{K}_i^j} \nabla\cdot (v^2\m)\ud \mathbf{x}=&\int_{\partial(K_i^j\backslash  \underline{K}_i^j)} v^2\m\cdot \mathbf{{n}}_{\partial (K_i^j\backslash  \underline{K}_i^j)} \ud s\\
			=&\int_{F_i^j} v^2\m\cdot\mathbf{{n}}\ud s + \int_{\underline{F}_i^j} v^2\m\cdot\mathbf{{n}}_{\underline{F}_i^j}\ud s,
		\end{aligned}
		\]
		with $\mathbf{{n}}_\omega$  denoting the outward normal vector of a domain  $\omega\subset\mathbb{R}^d$ and $\m$ as in Assumption \ref{assumption_mesh}(b), upon observing that $\m\cdot\mathbf{{n}}_{\partial (K_i^j\backslash  \underline{K}_i^j)}=0$ on $\partial (K_i^j\backslash  \underline{K}_i^j)\backslash (F_i^j\cup \underline{F}_i^j)$. 
		Now, denoting by $|\cdot|_2$ the Euclidean distance in $\mathbb{R}^d$, the product rule and elementary estimates imply
		\[
		\int_{K_i^j\backslash  \underline{K}_i^j} \nabla\cdot (v^2\m)\ud \mathbf{x} \le
		\Big(2 \max_{K_i^j} |\m|_2\|v\nabla v\|_{L_{\infty}(K_i^j\backslash  \underline{K}_i^j)}+d\|v\|^2_{L_{\infty}(K_i^j\backslash  \underline{K}_i^j)}\Big)|K_i^j\backslash  \underline{K}_i^j|,
		\]
		noting that $\nabla\cdot \m = d$. The right-hand side of the above inequality converges to zero as $|K_i^j\backslash  \underline{K}_i^j|\to 0$, which,  {in turn}, is achieved as $r\to \infty$. Thus,  Assumption \ref{assumption_mesh}(b) gives
		\[
		\min_{\mathbf{x}\in F_i^j}(\m\cdot\mathbf{n})\|v\|_{F_i^j}^2\le \Big|\int_{F_i^j} v^2\m\cdot\mathbf{n}\ud s\Big| \le  \Big|\int_{\underline{F}_i^j} v^2\m\cdot \mathbf{{n}}_{\underline{F}_i^j}\ud s\Big| + \epsilon,
		\]
		for some $\epsilon = { \mathcal{O}(|K_i^j\backslash  \underline{K}_i^j|)}$
		as $r\rightarrow \infty$.  
		Each of the finite $F_i$'s  is, in turn, image of a finite number of Lipschitz functions locally. \Rev
		{Let $L$ be the Lipschitz constant of a parametrisation of $F_i^j$ with respect to $\underline{F}_i^j$, giving $|F_i^j|\le L |\underline{F}_i^j|$. At the same time, we have $|K_i^j\setminus \underline{K}_i^j| \leq Lh_{\underline{F}_i^j}|\underline{F}_i^j|$, as the maximum Euclidean distance between $\underline{F}_i^j$ and $\tilde{F}_i^j$ is bounded from above by $Lh_{\underline{F}_i^j}$. 
			Hence the area  $|K_i^j\setminus \underline{K}_i^j|$ converges to zero faster than $|F_i^j|$ by an order of $h_{\underline{F}_i^j}$.
		}
		
		At the same time, a standard trace-inverse estimate on simplices, \cite{warburton2003constants}, yields
		\[
		\Big|\int_{\underline{F}_i^j} v^2\m\cdot \mathbf{{n}}_{\underline{F}_i^j}\ud s\Big| \le \max_{\mathbf{x}\in\underline{F}_i^j}(\m\cdot\mathbf{n}_{\underline{F}_i^j}) \frac{(p+1)(p+d) |\underline{F}_i^j|}{d|\underline{K}_i^j|}  \|v\|_{\underline{K}_i^j}^2.
		\]
		Combining the above, we have that, for any $\delta>0$, there exists an $r$ large enough such that
		\[
		\begin{aligned}
			\|v\|_{F_i^j}^2\le &\     \frac{\max_{\mathbf{x}\in\underline{F}_i^j}(\m\cdot\mathbf{n}_{\underline{F}_i^j})}{\min_{\mathbf{x}\in F_i^j}(\m\cdot\mathbf{n})} \frac{(p+1)(p+d) |\underline{ {F}}_i^j|}{d|\underline{K}_i^j|}  \|v\|_{\underline{K}_i^j}^2+ \frac{\epsilon}{{\min_{\mathbf{x}\in F_i^j}(\m\cdot\mathbf{n})} }\\
			\le&\  (1+\delta)\frac{(p+1)(p+d) |\underline{F}_i^j|}{d|\underline{K}_i^j|}  \|v\|_{\underline{K}_i^j}^2\le     (1+\delta) \frac{(p+1)(p+d) |F_i^j|}{d|\underline{K}_i^j|}  \|v\|_{K_i^j}^2,
		\end{aligned}
		\]
		as the first ratio on the first estimate tends to $1$ as $r\to \infty$ 
		.
		In the last inequality we used the bound $|\underline{F}_i^j|\le |F_i^j|$ and that $\underline{K}_i^j\subset K_i^j$. Another application of the Divergence Theorem and elementary calculations give
		\[
		d| K_i^j|=\int_{ K_i^j} \nabla\cdot \m\ud \mathbf{x}=\int_{F_i^j} \m\cdot\mathbf{n}\ud s \ge \min_{\mathbf{x}\in F_i^j}(\m\cdot\mathbf{n})|F_i^j|,
		\]
		or 
		\[
		d| \underline{K}_i^j| +d|K_i^j\backslash  \underline{K}_i^j|\ge \min_{\mathbf{x}\in F_i^j}(\m\cdot\mathbf{n})|F_i^j|,
		\]
		or
		\[
		\frac{|F_i^j|}{| \underline{K}_i^j|}\le \frac{d}{\displaystyle\min_{\mathbf{x}\in F_i^j}(\m\cdot\mathbf{n})}\Big(1+\frac{|K_i^j\backslash  \underline{K}_i^j|}{| \underline{K}_i^j|}\Big)\le  \frac{(1+\delta)d}{\displaystyle \min_{\mathbf{x}\in F_i^j}(\m\cdot\mathbf{n})},
		\]
		for any $\delta>0$ when $r$ is sufficiently large. Combining the above, we deduce
		\[
		\|v\|_{F_i}^2=	\sum_{j=1}^r	\|v\|_{F_i^j}^2\le\sum_{j=1}^r\frac{(1+\delta)^2(p+1)(p+d)}{\displaystyle\min_{\mathbf{x}\in F_i^j}(\m\cdot\mathbf{n})}  	\|v\|_{K_i^j}^2\le \frac{(1+\delta)^2(p+1)(p+d)}{\displaystyle\min_{\mathbf{x}\in F_{K_i}}(\m\cdot\mathbf{n})}  \|v\|_{K_i}^2.
		\]
		Taking, finally, $r\to \infty$, allows for $\delta\to 0$ and the result \eqref{eq:inv_gen} follows. 
	\end{proof}
	
	\begin{remark}
		It is important to stress that the right-hand side of \eqref{eq:inv_gen} is a function of $\uu{x}^0_i$ defining $K_{F_i}$. Since the closure of the original (curved) element $K$ is compact in $\mathbb{R}^d$, it is possible to minimise the right-hand side of \eqref{eq:inv_gen} by selecting an `optimal' $\uu{x}_i^0$.   
		{Moreover, upon making the stronger assumption \eqref{stronger_assumption}, we arrive at the familiar trace inverse estimate for star-shaped, shape-regular elements with piecewise smooth boundaries:
			$
			\|v\|_{\partial K}^2\le Cp^2h_K^{-1}  \|v\|_{K}^2.
			$}
	\end{remark}\vspace{-.3cm}
	{
		\begin{example}\label{inv_circle}
			Let $K=B(0,R)\subset \mathbb{R}^d$ be the ball of radius $R$ centred at the origin. Then, selecting $F_1=\partial K=:S(0,r)$, we have $\|v\|_{S(0,R)}^2 \le  (p+1)(p+d)R^{-1}\|v\|_{B(0,R)}^2$.
		\end{example}
	}
	
	{Within this} geometric setting, we can specify the constants of the classical trace inequality for $H^1$-functions. The result below is a mild extension of \cite[Lemma 4.1]{CGS18}, (cf. also \cite{WX19}) following closely the classical proof from \cite{agmon}.
	
	\begin{lemma}\label{lem:trace}
		Let $K\in\mathcal{T}$  {be a Lipschitz domain satisfying} Assumption \ref{assumption_mesh}. Then, for all $\zeta>0$, we have the estimate
		\begin{equation}\label{eq:trace}
			\|v\|_{F_i}^2 \le \frac{d+\zeta}{\displaystyle \min_{\uu{x}\in F_i}(\m\cdot\mathbf{n})}\|v\|_{K_{F_i}^{}}^2 + \frac{\displaystyle \max_{\uu{x}\in F_i}|\m|_2^2}{\displaystyle \zeta\min_{\uu{x}\in F_i}(\m\cdot\mathbf{n})}
			\|\nabla v\|_{K_{F_i}^{}}^2,
		\end{equation}
		for all $v\in H^1(\Omega)$ and $i=1,\dots, n_{K}^{}$.
	\end{lemma}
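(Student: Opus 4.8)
The plan is to follow Agmon's classical trace argument, carried out on the sub-element $K_{F_i}$ with the radial field $\m(\uu{x}) = \uu{x} - \uu{x}_i^0$. The starting point is the pointwise identity $\nabla\cdot(v^2\m) = 2v\,\nabla v\cdot\m + d\,v^2$, using $\nabla\cdot\m = d$. For $v\in C^1(\bar\Omega)$, the Divergence Theorem on the Lipschitz domain $K_{F_i}$ gives
\[
\int_{K_{F_i}}\!\big(2v\,\nabla v\cdot\m + d\,v^2\big)\ud\uu{x} = \int_{\partial K_{F_i}}\! v^2\,\m\cdot\mathbf{n}_{\partial K_{F_i}}\ud s .
\]
The key geometric observation is that $\partial K_{F_i}$ splits into $F_i$ together with the $d$ planar faces meeting at $\uu{x}_i^0$; on each such planar face, $\m(\uu{x}) = \uu{x} - \uu{x}_i^0$ lies in the hyperplane of the face (which passes through $\uu{x}_i^0$), so $\m\cdot\mathbf{n} = 0$ there and the boundary integral reduces to $\int_{F_i} v^2\,\m\cdot\mathbf{n}\ud s$.

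Bounding this term from below by $\min_{\uu{x}\in F_i}(\m\cdot\mathbf{n})\|v\|_{F_i}^2$ via Assumption \ref{assumption_mesh}(b), and the volume integral from above by Cauchy--Schwarz, we obtain
\[
\min_{\uu{x}\in F_i}(\m\cdot\mathbf{n})\,\|v\|_{F_i}^2 \le 2\max_{K_{F_i}}|\m|_2\,\|v\|_{K_{F_i}}\|\nabla v\|_{K_{F_i}} + d\,\|v\|_{K_{F_i}}^2 .
\]
Here $\max_{K_{F_i}}|\m|_2 \le \max_{\uu{x}\in F_i}|\m|_2$, since $K_{F_i}$ is the union of the conical sub-elements $K_i^j$ from the proof of Lemma \ref{curved_inv_est}, and every ray emanating from $\uu{x}_i^0$ leaves $K_i^j$ through its curved face $F_i^j\subset F_i$, so the distance from $\uu{x}_i^0$ within $K_{F_i}$ is maximised on $F_i$. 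Applying Young's inequality $2ab\le\zeta a^2 + \zeta^{-1}b^2$ with $a = \|v\|_{K_{F_i}}$ and $b = \max_{\uu{x}\in F_i}|\m|_2\,\|\nabla v\|_{K_{F_i}}$, and dividing by $\min_{\uu{x}\in F_i}(\m\cdot\mathbf{n})>0$ (positive by Assumption \ref{assumption_mesh}(b)), yields \eqref{eq:trace} for $v\in C^1(\bar\Omega)$. The general case $v\in H^1(\Omega)$ follows by density of $C^\infty(\bar\Omega)$ in $H^1(\Omega)$ and the qualitative boundedness of the trace $H^1(K_{F_i})\to L^2(F_i)$ on the Lipschitz domain $K_{F_i}$, both sides of \eqref{eq:trace} being continuous in the $H^1(K_{F_i})$-norm.

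The computation itself is entirely elementary; the points requiring care are (i) the passage from $C^1$ to $H^1$ data, equivalently the justification of the Gauss--Green formula for the merely $W^{1,1}$ field $v^2\m$ on the curved Lipschitz sub-element $K_{F_i}$, and (ii) the inequality $\max_{K_{F_i}}|\m|_2\le\max_{\uu{x}\in F_i}|\m|_2$, which rests on the conical structure of the $K_i^j$'s. Since the constant in \eqref{eq:trace} is intended to be fully explicit in the element geometry, both should be stated explicitly, though neither is a genuine difficulty.
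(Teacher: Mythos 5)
Your proof is correct and follows essentially the same route as the paper: the Divergence Theorem applied to $v^2\m$ on $K_{F_i}$, the vanishing of $\m\cdot\mathbf{n}$ on the planar faces through $\uu{x}_i^0$, and Young's inequality with parameter $\zeta$. The paper's version is a one-line computation leaving the density argument and the identity $\max_{\overline{K}_{F_i}}|\m|_2=\max_{\uu{x}\in F_i}|\m|_2$ implicit; your spelling these out is sound and does not change the argument.
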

	\begin{proof}
		The Divergence Theorem and the fact that $\m\cdot\mathbf{{n}}=0$ on $\partial K_{F_i}\setminus F_i$ imply
		\[
		\int_{F_i} v^2\m\cdot\mathbf{n}\ud s =\int_{K_{F_i}}  \nabla\cdot (v^2\m)\ud \mathbf{x}
		\le   d\|v\|_{K_{F_i}^{}}^2+
		2\max_{\uu{x}\in F_i}|\m|_2\|v\|_{K_{F_i}^{}}\|\nabla v\|_{K_{F_i}^{}}, 
		\]
		from which the result already follows. 
	\end{proof}
	\begin{remark}
		Summing over $i=1,\dots, n_K^{}$, assuming \eqref{stronger_assumption} and that $h_{K_{F_i}}\sim h_K$, \eqref{eq:trace} gives the classical trace estimate
		$
		\|v\|_{\partial K}^2 \le C\big( h_K^{-1}\|v\|_{K}^2 + h_K
		\|\nabla v\|_{K}^2\big).
		$
	\end{remark}

	\subsection{ {Basic $H^1-L_2$ inverse estimate} } 
	{$H^1-L_2$ inverse estimates for polynomials on $d$-dimensional simplicial or box-like domains are proven via directional arguments, if explicit dependence on the polynomial degree is desired, see, e.g., \cite{schwab}. Generalizations of these estimates on convex domains use an analogous method of proof \cite{Kroo1}. Here, in the same spirit, we extend further the domain generality in $H^1-L_2$ inverse estimates, by also employing directional arguments on curved prismatic subdomains; the general case then follows by covering general Lipschitz domains by these curved prisms.}
	\begin{figure}
		\centering
		\setlength{\unitlength}{1.3cm}
		\linethickness{0.1mm}
		\begin{picture}(0,2)(1,0)
			\put(0,0){\line(1,0){1.5}}
			\put(0,0){\line(0,1){1.5}}
			\put(1.5,0){\line(0,1){1.8}}
			\cbezier(0,1.5)(.2, 2.5)(.8, 1.6)(1, 1.45)
			\put(.98,1.47){\circle*{.1}}
			\cbezier(1, 1.45)(1.2, 1.5)(1.4, 1.6)(1.5, 1.8)

			\put(0,1.5){\circle*{.1}}
			\put(1.5,1.8){\circle*{.1}}
			\put(0,0){\circle*{.1}}
			\put(1.5,0){\circle*{.1}}
			\put(.7,1.9){$\hat{F}$}
			
			\put(.7,.1){$\hat{F}^0$}
			
			{\color{red} \put(0,1.42){\line(1,0){1.5}}}
			{\color{red} \put(-.28,0){\vector(0,-1){.04}}}
			{\color{red} \put(-.37,0){\vector(0,1){1.4}}}
			{\color{red} \put(-.67,0.5){$\hat{\rho}$}}
			
		\end{picture}
		\caption{A reference generalized prism $\hat{K}$ for $d=2$.}
		\label{fig:ref_gen_one}
	\end{figure}
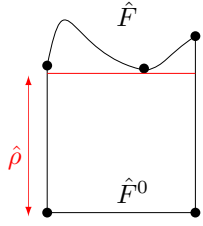
	{
		\begin{definition}\label{def:gen_prism} Let $\hat{F}^0:=[0,1]^{d-1}\Rev{\subset\mathbb{R}^d}$ and $\phi:\hat{F}^0\to\mathbb{R}$ a Lipschitz continuous scalar function. A \emph{reference generalized prism} is a domain $\hat{K}\equiv\hat{K}_{\phi}\subset \mathbb{R}^d$ given by
			\[
			\hat{K}\equiv\hat{K}_{\phi}:=\{ {\bf x}\in \mathbb{R}^d: 0\le x_i\le 1, i=1,\dots,d-1, 0\le x_d\le \phi(x_1,\dots,x_{d-1})\},
			\]
			with the properties: 1) $[0,1]^d\subset \hat{K}$, and 
			2) the straight line connecting any pair $({\bf x},{\bf y}) \in \hat F^0 \times \hat F$ lies fully in $\hat K$.
			\[
			\hat{F}:=\{ {\bf x}\in \mathbb{R}^d: 0\le x_i\le 1, i=1,\dots,d-1,  x_d= \phi(x_1,\dots,x_{d-1})\}.
			\] 
			Also, we set
			$
			\hat{\rho}:=\Rev{\sup} \{\rho \ge 1:\hat{F}^0\times[0,\rho]\subset \hat{K}\}
			$ and $\hat{r}:=\lfloor \max_{{\bf x}\in\hat{F}^0}\phi({\bf x})\rfloor +1$.
			\qed
		\end{definition}
		We refer to Figure \ref{fig:ref_gen_one} for an illustration.
	}
	{
		\begin{remark}\label{rem:contraction}
			A sufficient but, crucially, \emph{not} necessary condition for $\hat{K}\equiv\hat{K}_\phi$ to be a reference generalized prism is that $\phi$ is a contraction. Since, however, $\hat{K}_\phi$ will be used in conjunction with affine maps below, it will become possible to consider $\phi$ with Lipschitz constants greater than one. 
		\end{remark}
	}
	
	{
		\begin{remark}\label{rho}
			The \Revthree{`height' $\hat{r}$} is a measure of \Revthree{anisotropy of the reference generalized prism. Note that we} can take $\hat{\rho}=1$ without essential loss of generality. Indeed, if $\hat{\rho}>1$, the change of variables $x_d\to x_d/\hat{\rho}$ implies a modification of the Lipschitz function $\phi$, reducing its Lipschitz constant. Star-shapedness with respect to $\hat{F}^0$ is also ensured (cf., Remark \ref{rem:contraction}).  
		\end{remark}
	}
	
	\Rev{In light of the above remark,} we  consider the case $\hat{\rho}=1$ only, in what follows.
	
	{
		\begin{lemma}\label{basic_H1_L2} Let $v\in\mathcal{P}_p(\hat{K})$,  $p\in\mathbb{N}$, with $\hat{K}\subset\mathbb{R}^d$ a reference generalized prism. Then, we have the inverse estimate
			\begin{equation}\label{v_breaking_final}
				\|\nabla v\|_{\hat{K}}^2
				\le C_{\rm inv}^Bp^4\|v\|_{\hat{K}}^2,
			\end{equation}
			with $\displaystyle  C_{\rm inv}^B\equiv C_{\rm inv}^B(d,\hat{r}):=
			\Revthree{288(d-1)\hat{r}^2}
			+12d$.
		\end{lemma}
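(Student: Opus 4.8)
The plan is to reduce the estimate on the reference generalized prism $\hat K$ to one-dimensional polynomial inverse estimates along the $d$ coordinate directions, handling the "vertical" direction $x_d$ (the one along which the prism has variable height $\phi$) separately from the $d-1$ "horizontal" directions. First I would split $\|\nabla v\|_{\hat K}^2=\sum_{k=1}^{d-1}\|\partial_k v\|_{\hat K}^2+\|\partial_d v\|_{\hat K}^2$ and treat the vertical term first. For fixed $(x_1,\dots,x_{d-1})\in\hat F^0$, the function $t\mapsto v(x_1,\dots,x_{d-1},t)$ is a univariate polynomial of degree $\le p$ on the interval $[0,\phi(x_1,\dots,x_{d-1})]$; applying the classical one-dimensional Markov inequality on that interval gives $\int_0^{\phi}|\partial_d v|^2\,dt\le C p^4 \phi^{-2}\int_0^{\phi}|v|^2\,dt$. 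Since $\phi$ ranges over $[1,\hat r]$ by Definition \ref{def:gen_prism} (using $[0,1]^d\subset\hat K$ for the lower bound and $\hat r=\lfloor\max\phi\rfloor+1$ for the upper), the factor $\phi^{-2}$ is bounded by $1$, and integrating over $\hat F^0$ yields $\|\partial_d v\|_{\hat K}^2\le C_d p^4\|v\|_{\hat K}^2$ with an explicit $C_d$ coming from the Markov constant on an interval of length $\ge 1$.

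\textbf{The horizontal directions --- the main obstacle.} The genuinely delicate part is bounding $\|\partial_k v\|_{\hat K}^2$ for $k\le d-1$, because slicing $\hat K$ by a plane $x_k=\text{const}$ does not produce a fixed box: the cross-section is itself a (lower-dimensional) generalized prism whose shape depends on $x_k$, and the domain of integration for the one-dimensional inequality in the $x_k$ variable depends on all the other coordinates through $\phi$. The trick, following the directional arguments of \cite{Kroo1,schwab,book} and exploiting the star-shapedness of $\hat F^0$ with respect to $\hat F$ together with the containment $[0,1]^d\subset\hat K$, is to integrate the one-dimensional estimate only over the sub-box $[0,1]^{d-1}$ in the transverse variables while keeping $x_d$ in a range that stays inside $\hat K$. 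Concretely, for fixed values of the remaining variables one restricts $x_k$ to $[0,1]$ and bounds $\int_0^1|\partial_k v|^2\,dx_k\le 4p^4\int_0^1|v|^2\,dx_k$ by Markov on the unit interval; then one integrates this over $(x_1,\dots,\widehat{x_k},\dots,x_{d-1})\in[0,1]^{d-2}$ and over $x_d\in[0,\phi]$. The issue is that the left side we want controls $\partial_k v$ over all of $\hat K$, not just over $[0,1]^{d-1}\times[0,\phi]$; to recover the full integral I would again use that $\partial_k v$ is a polynomial and apply the $L_\infty$–$L_2$-type comparison, or more simply observe that because $\hat F^0$ is star-shaped with respect to $\hat F$ one can, after the affine normalization $\hat\rho=1$ from Remark \ref{rho}, cover the "overhang" region $\hat K\setminus([0,1]^{d-1}\times[0,\phi])$ and transfer norms there using a chopping/perturbation argument of the type recorded in Lemma \ref{new_chop}. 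This is where the combinatorial constant $64(d-1)\hat r(\hat r+1)(2\hat r+1)$ will emerge: the factor $(d-1)$ from summing over the $d-1$ horizontal directions, and the factor $\hat r(\hat r+1)(2\hat r+1)=6\sum_{m=1}^{\hat r}m^2$ from decomposing the vertical extent $[0,\phi]$ (of length $\le\hat r$) into $\hat r$ unit slabs and summing the contributions of the one-dimensional estimates slab by slab, with the constant $64=4\cdot 16$ absorbing the Markov constant $4$ on the unit interval together with a factor from the slab overlap/telescoping.

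\textbf{Assembling the bound.} Once both contributions are in hand, I would add them: $\|\nabla v\|_{\hat K}^2\le\big(64(d-1)\hat r(\hat r+1)(2\hat r+1)+12d\big)p^4\|v\|_{\hat K}^2$, the $12d$ term packaging the vertical ($\partial_d$) contribution (the Markov constant $4$ on an interval of length $\ge 1$, times $d$, times a modest factor $3$ from the interplay with the base measure, giving $12d$; in fact only one direction is vertical, but the bookkeeping is cleanest if one allows this crude overestimate). The key structural inputs are: the one-dimensional Markov inequality with its sharp $p^4$ scaling and explicit constant on a unit interval; the geometric normalizations in Definition \ref{def:gen_prism} and Remark \ref{rho} that force $1\le\phi\le\hat r$ and $\hat\rho=1$; and the star-shapedness of $\hat F^0$ with respect to $\hat F$, which guarantees that every vertical segment $\{(x',t):0\le t\le\phi(x')\}$ lies in $\hat K$ and that the overhang can be controlled without shape-regularity. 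I would expect the only real work beyond routine one-dimensional estimates to be the careful slab decomposition in the $x_d$ variable and the accounting that turns the sum of unit-slab estimates into the stated cubic-in-$\hat r$ constant; everything else is assembling standard pieces.
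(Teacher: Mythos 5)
Your treatment of the vertical derivative $\partial_d v$ (Fubini along vertical fibres plus the one-dimensional inverse estimate) matches the paper's argument, and you have correctly located the difficulty in the horizontal directions. But the mechanism you propose there does not work, and this is a genuine gap. Applying the one-dimensional Markov inequality to $x_k\mapsto v$ on a full segment $x_k\in[0,1]$ and then integrating over $x_d\in[0,\phi]$ does not control $\|\partial_k v\|_{\hat K}^2$: for heights $x_d>1$ the horizontal segment $\{x_k\in[0,1]\}$ at that height is in general \emph{not} contained in $\hat K$, since $\phi$ depends on $x_k$, so the right-hand side of the resulting estimate is the $L_2$ norm of $v$ over a set strictly larger than $\hat K$ --- larger by an amount of order one, not of order $p^{-2}$. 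Neither proposed repair closes this: Lemma \ref{new_chop} only transfers polynomial norms across perturbations of thickness at most $(8p)^{-2}$, whereas the overhang above the unit cube has height up to $\hat r-1$; and an $L_\infty$--$L_2$ comparison between $\hat K$ and an order-one enlargement costs a factor growing (exponentially) in $p$, destroying the $p^4$ scaling.

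The paper's proof avoids horizontal fibres entirely. It decomposes each horizontal derivative pointwise as $|v_{x_j}|^2\le 8i^2|v_{x_d}|^2+2(4i^2+1)\,|{\bf v}_{\pm i}^{x_j}\cdot\nabla v|^2$, i.e., as a combination of the vertical derivative and of directional derivatives along tilted unit vectors ${\bf v}_{\pm i}^{x_j}\propto(\pm 1/2,\dots,i)^T$, and covers $\hat K$ by $[0,1]^d$ together with truncated prisms $\hat K_{\pm i}^{x_j}$ tilted along these directions. The tilts are chosen so that every line in direction ${\bf v}_{\pm i}^{x_j}$ meeting $\hat K_{\pm i}^{x_j}$ traverses the unit cube $[0,1]^d\subset\hat K$; hence each such segment has length at least $\sqrt{i^2+1/4}/i\ge 1$ and the one-dimensional inverse estimate applies along it with a uniform constant. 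The factor $\hat r(\hat r+1)(2\hat r+1)=6\sum_{i=1}^{\hat r}i^2$ then arises from summing the coefficients $8i^2\cdot 12$ and $2(4i^2+1)\cdot 48i^2/(4i^2+1)$ over the tilt indices $i=\pm1,\dots,\pm\hat r$, not from a slab decomposition of the vertical extent as you conjecture. This directional (tilted-line) decomposition is the missing idea; without it the horizontal part of your argument does not go through.
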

	}
	
	\begin{figure}[t]\vspace{.8cm}
		\centering
		\setlength{\unitlength}{1.2cm}
		\linethickness{0.15mm}
		\begin{picture}(0,0)(4,0)
			\put(0,0){\line(1,0){1.5}}
			\put(0,0){\line(0,1){2.61}}
			\put(1.5,0){\line(0,1){2.61}}
			

			\put(.58,1.24){$R_{\Revthree{-}}^{x_1}$}

			\put(.38,0){\line(-2,7){.38}}
			\put(1.5,0){\line(-2,7){.74}}
			
			{\color{red} \put(0,1.32){\line(1,0){1.5}}}
			{\color{red} \put(-.08,2.61){\line(1,0){1.5}}}
			
		\end{picture}
		\begin{picture}(0,2)(2,0)
			\put(0,0){\line(1,0){1.5}}
			\put(0,0){\line(0,1){2.61}}
			\put(1.5,0){\line(0,1){2.61}}

			{\color{red} \put(0,1.32){\line(1,0){1.5}}}
			
			{\color{red} \put(-.07,2.61){\line(1,0){1.5}}}

			
			\put(.57,1.24){$R_{\Revthree{+}}^{x_1}$}
			
			\put(1.01,0){\line(2,7){.38}}
			\put(-.08,0){\line(2,7){.74}}

		\end{picture}	
		\begin{picture}(0,2)(0,0)
			\put(0,0){\line(1,0){1.5}}
			\put(0,0){\line(0,1){1.5}}
			\put(1.5,0){\line(0,1){1.8}}
			\cbezier(0,1.5)(.2, 2.5)(.8, 1.6)(1, 1.45)
			\put(.98,1.47){\circle*{.1}}
			\cbezier(1, 1.45)(1.2, 1.5)(1.4, 1.6)(1.5, 1.8)
			
			\put(0,1.5){\circle*{.1}}
			\put(1.5,1.8){\circle*{.1}}
			\put(0,0){\circle*{.1}}
			\put(1.5,0){\circle*{.1}}
			\put(.38,0){\line(-2,7){.38}}
			\put(1.5,0){\line(-2,7){.42}}
			{\color{red} \put(0,1.32){\line(1,0){1.5}}}
			
			\put(.55,.6){$\hat{K}_{\Revthree{-}}^{x_1}$}
		\end{picture}
		\begin{picture}(0,2)(-2,0)
			\put(0,0){\line(1,0){1.5}}
			\put(0,0){\line(0,1){1.5}}
			\put(1.5,0){\line(0,1){1.8}}
			\cbezier(0,1.5)(.2, 2.5)(.8, 1.6)(1, 1.45)
			\put(.98,1.47){\circle*{.1}}
			\cbezier(1, 1.45)(1.2, 1.5)(1.4, 1.6)(1.5, 1.8)

			\put(0,1.5){\circle*{.1}}
			\put(1.5,1.8){\circle*{.1}}
			\put(0,0){\circle*{.1}}
			\put(1.5,0){\circle*{.1}}
			\put(0,0){\line(2,7){.535}}
			\put(1.1,0){\line(2,7){.38}}
			{\color{red} \put(0,1.32){\line(1,0){1.5}}}
			
			\put(.5,.6){$\hat{K}_{\Revthree{+}}^{x_1}$}
		\end{picture}
		
		\caption{ {$R=[0,1]\times [0,\hat{r}]$ and $\hat{K}$, with $\hat{r}=2$, and their respective truncated prisms $R_{\Revthree{\pm}}^{x_1}$ and $\hat{K}_{\Revthree{\pm}}^{x_1}$.}}
		\label{fig:ref_gen_rec_new}
	\end{figure}
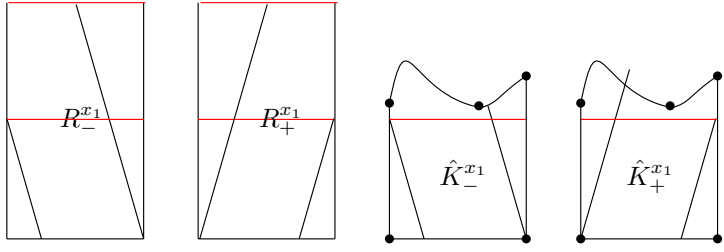

	\begin{proof}  {We begin by introducing some notation. Let $F$ be a  hyperplanar region in $\mathbb{R}^d$ and let vector ${\bf v}\in\mathbb{R}^d$. We define a \emph{zone} $Z(F,{\bf v})\subset\mathbb{R}^{d}$, to be the geometric locus given by
			$
			Z(F,{\bf v}):=\{ {\bf z} +\beta {\bf v} : {\bf z}\in F,\beta\in\mathbb{R} \}.
			$
			(Thus, for instance, the domain $[0,1]\times \mathbb{R}=Z([0,1],(0,\alpha))$ for any $\alpha\in \mathbb{R}$.)
			Using this notation, we now construct \Revthree{$2(d-1)$} suitable subsets \Revthree{$\{\hat{K}_{\pm}^{x_j}\}_{j=1}^{d-1}$},  so that  the union of $\hat{K}_0:=[0,1]^d$ together with \Revthree{$\{\hat{K}_{\pm}^{x_j}\}_{j=1}^{d-1}$} cover $\hat{K}$.}
		
		{		We first present the construction for $d=2$ for accessibility. Set  $\hat{F}_{0,-}^{x_1}:=\Revthree{[(2\hat{r})^{-1},1]}$ and $\hat{F}_{0,+}^{x_1}:=\Revthree{[0,1-(2\hat{r})^{-1}]}$. Then, elementary geometric arguments reveal that the rectangle $R:=[0,1]\times[0,\hat{r}]$ can be covered by the union of $R_0:=[0,1]^2$ and the $\Revthree{2}$ truncated prisms defined as:
			\[
			R_{\Revthree{\pm}}^{x_1}:=R\cap Z\big(\hat{F}_{0,\pm}^{x_1}, {\bf v}_{\Revthree{\pm}}^{x_1}\big), \text{ with } {\bf v}_{\Revthree{\pm}}^{x_1}:=\frac{(\pm 1/2,\Revthree{\hat{r}})^T}{|(1/2,\Revthree{\hat{r}})|},
			\]
			with $|\cdot|$ denoting the standard Euclidean distance; 
			we refer to Figure \ref{fig:ref_gen_rec_new} for an illustration with $\hat{r}=2$. }
		
		{		Correspondingly, for $d=3$, $R:=[0,1]^2\times[0,\hat{r}]$ can be covered by $R_0:=[0,1]^3$ together with $\Revthree{2}$ `$x_1$-direction tilted', truncated prisms:
			\[
			R^{x_1}_{\Revthree{\pm}}:=R\cap Z\big(\hat{F}_{0,\pm}^{x_1}, {\bf v}_{\Revthree{\pm}}^{x_1}\big), \text{ with } {\bf v}_{\Revthree{\pm}}^{x_1}:=\frac{(\pm1/2,0,\Revthree{\hat{r}})^T}{|(1/2,0,\Revthree{\hat{r}})|},
			\]
			with $\hat{F}_{0,-}^{x_1}:=\Revthree{[(2\hat{r})^{-1},1]}\times [0,1]$ and $\hat{F}_{0,+}^{x_1}:=\Revthree{[0,1-(2\hat{r})^{-1}]}\times [0,1]$ the respective prism bases. At the same time, $R$ can be also covered by $R_0$ together with the $\Revthree{2}$ `$x_2$-direction tilted', truncated prisms:
			\[
			R_{\Revthree{\pm}}^{x_2}:=R\cap Z\big( \hat{F}_{0,\pm}^{x_2}, {\bf v}_{\Revthree{\pm}}^{x_2}\big), \text{ with } {\bf v}_{\Revthree{\pm}}^{x_2}:=\frac{(0,\pm1/2, \Revthree{\hat{r}})}{|(0,1/2,\Revthree{\hat{r}})^T|},
			\]
			with $\hat{F}_{0,-}^{x_2}:= [0,1]\times\Revthree{[(2\hat{r})^{-1},1]}$ and $\hat{F}_{0,+}^{x_2}:=[0,1]\times \Revthree{[0,1-(2\hat{r})^{-1}]}$ the respective prism \emph{bases}. (We note the 'overloading' of notation with respect to dimension.) The construction for $d\ge 4$ follows in a completely analogous fashion by considering $\Revthree{2}$ `$x_j$-direction tilted', truncated prisms for each $j=1,\dots,d-1$.}
		
		{	Since $\hat{K}\subset [0,1]^{d-1}\times [0,\hat{r}]$, we consider the sets $\hat{K}_0=R_0$ together with
			\[
			\hat{K}_{\Revthree{\pm}}^{x_j}:=\hat{K}\cap R_{\Revthree{\pm}}^{x_j},
			\]
			for each fixed $j=1,\dots,d\Revthree{-1}$;
			see Figure \ref{fig:ref_gen_rec_new} for an illustration for $d=2$ and $\hat{r}=2$.  }
		
		{
			First, we observe the estimates
			\begin{equation}\label{der_est_squared}
				\begin{aligned}
					|v_{x_j}|^2\le&\   8\Revthree{\hat{r}}^2 |v_{x_d}|^2+2(4\Revthree{\hat{r}}^2+1) |{\bf v}_{\Revthree{\pm}}^{x_j}\cdot\nabla v|^2,
				\end{aligned}
			\end{equation}
			for $j=1,\dots,d-1$.
			Using the latter, we have, respectively,
			\begin{equation}\label{v_breaking}
				\begin{aligned}
					& \|\nabla v\|_{\Revthree{\hat{K}}}^2 -\|v_{x_d}\|_{\Revthree{\hat{K}}}^2-\sum_{j=1}^{d-1}\|v_{x_j}\|_{\hat{K}_0}^2
					\le
					\sum_{j=1}^{d-1}\left(\|v_{x_j}\|_{\hat{K}_{\Revthree{+}}^{x_j}}^2+\|v_{x_j}\|_{\hat{K}_{\Revthree{-}}^{x_j}}^2\right) \\
					\le &\ 
					\sum_{j=1}^{d-1} \Big( 8\Revthree{\hat{r}}^2 \|v_{x_d}\|_{\hat{K}}^2
					+2(4\Revthree{\hat{r}}^2+1) 
					\big(
					\|{\bf v}_{\Revthree{+}}^{x_j}\cdot\nabla v\|_{\hat{K}_{\Revthree{+}}^{x_j}}^2
					+
					\|{\bf v}_{\Revthree{-}}^{x_j}\cdot\nabla v\|_{\hat{K}_{\Revthree{-}}^{x_j}}^2
					\big)
					\Big).
				\end{aligned}
			\end{equation}
			We now estimate each term on the right-hand side of \eqref{v_breaking}. For ${\rm e}_d:=(0,\dots,0,1)^{\rm T}$, 
			let $\ell_{\bf x}:=\hat{K}\cap \{{\bf x}+\alpha {\rm e}_d:\alpha\in\mathbb{R}\}$, i.e., the vertical line contained in $\hat{K}$ and passing through a point ${\bf x}$. Then, we have
			\begin{equation}\label{v_y}
				\| v_{x_d}\|_{\hat{K}}^2= \int_{\hat{F}^0}\int_{\ell_{\bf x}} v_{x_d}^2 \ud x_d \ud {\bf x}
				\le
				12 p^4\| v\|_{\hat{K}}^2,
			\end{equation}
			from  Fubini's Theorem and an \emph{one-dimensional} inverse estimate, see, e.g., \cite[Theorem 3.91]{schwab}. 
			We set  $\ell_{{\bf x},\Revthree{\pm}}^{x_j}:=\hat{K}\cap \{{\bf x}+\alpha {\bf v}_{\Revthree{\pm}}^{x_j}:\alpha\in\mathbb{R}\}$. Then,
			\begin{equation*}\label{v_tilt}
				\|{\bf v}_{\Revthree{\pm}}^{x_j} \cdot\nabla v\|_{\hat{K}_{\Revthree{\pm}}^{x_j}}^2= 
				\int_{\hat{F}_{0,\Revthree{\pm}}^{x_j}}\int_{\ell_{{\bf x},\Revthree{\pm}}^{x_j}} ({\bf v}_{\Revthree{\pm}}^{x_j} \cdot\nabla v)^2 \ud \zeta \ud {\bf x}
				\le \frac{48\Revthree{\hat{r}}^2 p^4}{4\Revthree{\hat{r}}^2+1}\| v\|_{\hat{K}}^2,
			\end{equation*}
			for $j=1,\dots, d$, upon noticing that the length of $\ell_{{\bf x},\Revthree{\pm}}^{x_j}$ is bounded from below by $\sqrt{\Revthree{\hat{r}}^2+1/4}/\Revthree{\hat{r}}$ (the length of the portion of $\ell_{{\bf x},\Revthree{\pm}}^{x_j}$ contained in $R_0$). Thus, \eqref{v_breaking} implies
			\begin{equation*}
				\begin{aligned}
					&\|\nabla v\|_{\hat{K}}^2 -\|v_{x_d}\|_{\hat{K}}^2-\sum_{j=1}^{d-1}\|v_{x_j}\|_{\hat{K}_0}^2
					\le  
					\Revthree{288p^4(d-1)\hat{r}^2}
					\|v\|_{\hat{K}}^2.
				\end{aligned}
			\end{equation*}
			The result already follows by combining the last estimate with \eqref{v_y} and the corresponding inverse estimates for  $\|v_{x_j}\|_{[0,1]^d}^2$.
		}
	\end{proof}	
	
	{
		Notice that \eqref{v_breaking_final} retrieves the known constant for $d=1$.  
		If $K$ is cuspoidal, (i.e., not Lipschitz) \eqref{v_breaking_final} does \emph{not} hold in general; we refer to \cite{Kroo2} for a counterexample.
		\begin{remark}
			Careful inspection of the above proof shows that, in fact, we have proven the sharper inverse estimate $\|\nabla v\|_{\hat{K}}^2
			\le C_{\rm inv}^Bp^4\hat{\rho}^{-2}\|v\|_{\hat{K}}^2$. In view of Remark \ref{rho}, however, a linear scaling results into a modified $\hat{K}$ (and, possibly modified $\hat{r}$) for which \eqref{v_breaking_final} is sharp.
		\end{remark}
	}
	\vspace{-.2cm}
	\subsection{ {Stability with respect to domain perturbation}}
	{
		We now prove a stability result with respect to domain perturbation in the spirit of \cite[Lemma 3.7]{G08} (see also \cite[Lemma 6]{Kroo1}). 
	}
	{
		\begin{lemma}\label{new_chop}
			Let $\hat{K}$ a reference generalized prism and consider its subset
			$
			\hat{K}_\epsilon:=\hat{K}\cap (\hat{K}-\epsilon {\rm e}_d);
			$
			here $A+z:=\{x+z, x\in A\}$, for $A\subset \mathbb{R}^d$ and $z\in \mathbb{R}^d$. Then, for all $v\in \mathcal{P}_p(\hat{K})$,  $p\in\mathbb{N}$, and for any $0<\epsilon\le (8p)^{-2}$, we have the estimate
			\begin{equation}
				\frac{1}{2}\| v\|_{\hat{K}}^2
				\le  \| v\|_{\hat{K}_\epsilon}^2.
			\end{equation}
		\end{lemma}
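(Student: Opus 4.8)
The plan is to reduce the $d$-dimensional inequality to a one-dimensional one by slicing $\hat{K}$ into vertical fibres parallel to ${\rm e}_d$ and then invoking a Nikolskii-type $L_\infty$--$L_2$ estimate for univariate polynomials; this mirrors the strategy of \cite[Lemma 3.7]{G08}, the new ingredient being that the chopping direction ${\rm e}_d$ is precisely the ``prism direction'' of the reference generalized prism, so the fibre-slicing reduction is exact.

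First I would make the geometry explicit. Since $[0,1]^d\subset\hat{K}$, the profile function satisfies $\phi(\mathbf{x}')\ge 1$ for every $\mathbf{x}'\in\hat{F}^0=[0,1]^{d-1}$, so the footprint of $\hat{K}$ is all of $\hat{F}^0$; a direct computation of $\hat{K}\cap(\hat{K}-\epsilon{\rm e}_d)$ gives
\[
\hat{K}_\epsilon=\bigl\{\mathbf{x}\in\mathbb{R}^d:\ 0\le x_i\le 1,\ i=1,\dots,d-1,\ \ 0\le x_d\le \phi(\mathbf{x}')-\epsilon\bigr\},
\]
and no fibre is emptied, since $\phi(\mathbf{x}')-\epsilon\ge 1-(8p)^{-2}>0$. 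By Fubini's theorem,
\[
\|v\|_{\hat{K}}^2=\int_{\hat{F}^0}\Bigl(\int_0^{\phi(\mathbf{x}')}v(\mathbf{x}',x_d)^2\,\ud x_d\Bigr)\ud\mathbf{x}',\qquad
\|v\|_{\hat{K}_\epsilon}^2=\int_{\hat{F}^0}\Bigl(\int_0^{\phi(\mathbf{x}')-\epsilon}v(\mathbf{x}',x_d)^2\,\ud x_d\Bigr)\ud\mathbf{x}',
\]
so it suffices to prove, for each fixed $\mathbf{x}'\in\hat{F}^0$, writing $q:=v(\mathbf{x}',\cdot)\in\mathcal{P}_p([0,L])$ with $L:=\phi(\mathbf{x}')\ge 1$, the one-dimensional bound $\tfrac12\|q\|_{[0,L]}^2\le\|q\|_{[0,L-\epsilon]}^2$, i.e.\ $\int_{L-\epsilon}^{L}q^2\le\tfrac12\int_0^{L}q^2$.

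For the one-dimensional step I would rescale the fibre to $[0,1]$ by $\tilde q(t):=q(Lt)\in\mathcal{P}_p([0,1])$, and set $\delta:=\epsilon/L$, so that $\delta\le\epsilon$ because $L\ge1$; the claim becomes $\int_{1-\delta}^{1}\tilde q^2\le\tfrac12\int_0^1\tilde q^2$. Bounding crudely, $\int_{1-\delta}^1\tilde q^2\le\delta\,\|\tilde q\|_{L_\infty([0,1])}^2$, and the univariate Nikolskii inequality $\|\tilde q\|_{L_\infty([0,1])}^2\le(p+1)^2\|\tilde q\|_{L_2([0,1])}^2$ (which follows from the Christoffel function of the Legendre polynomials; see, e.g., \cite{schwab}) yields $\int_{1-\delta}^1\tilde q^2\le\delta(p+1)^2\|\tilde q\|_{L_2([0,1])}^2$. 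Since $\delta\le\epsilon\le(8p)^{-2}$ and $(p+1)^2\le 32p^2$ for all $p\ge1$, we obtain $\delta(p+1)^2\le\tfrac{32p^2}{64p^2}=\tfrac12$, which is exactly the required bound; undoing the scaling and integrating over $\mathbf{x}'\in\hat{F}^0$ against the two Fubini identities completes the proof.

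The argument is in essence a calibration of constants, and the only point needing care is to check that the threshold $(8p)^{-2}$ is generous enough for the Nikolskii constant in use; there is ample slack, since any bound $\|\tilde q\|_{L_\infty([0,1])}^2\le C(p+1)^2\|\tilde q\|_{L_2([0,1])}^2$ with $C\le16$ would still close the estimate, and the factor $\tfrac12$ then passes unchanged through the integration over $\hat{F}^0$. The generalized-prism structure of Definition \ref{def:gen_prism} is what makes the fibre-slicing legitimate: every vertical line through $\hat{F}^0$ meets $\hat{K}$ in a single interval that contains $[0,1]$, so the one-dimensional reduction is uniform over the footprint and the geometry enters only through $\phi\ge1$ (equivalently $[0,1]^d\subset\hat{K}$).
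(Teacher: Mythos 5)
Your proof is correct and follows essentially the same route as the paper's: both reduce to vertical fibres via Fubini, observe that the removed strip on each fibre has length at most $\epsilon$ while the fibre itself has length at least one, and close the estimate with the univariate $L_\infty$--$L_2$ (Markov/Nikolskii) inverse inequality, the threshold $(8p)^{-2}$ leaving ample slack in both calibrations. The only cosmetic difference is that you rescale each fibre to $[0,1]$ and argue fibre-by-fibre with the sharp constant $(p+1)^2$, whereas the paper bounds the removed strip globally with the cruder constant $32p^2$.
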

		\begin{proof}
			Set $\ell_{{\bf x},\epsilon}:=\hat{K}\cap (\ell_{{\bf x}}-\epsilon {\rm e}_d) $. Then, we have, respectively,
			\begin{equation}\label{strip_small_gen}
				\begin{aligned}
					\|v\|_{\hat{K}\backslash \hat{K}_\epsilon}^2 
					=& \int_{\hat{F}^0}\int_{\ell_{{\bf x}}\backslash \ell_{{\bf x},\epsilon}} v^2 \ud x_d\,\ud {\bf x} 
					\le 
					\epsilon\int_{\hat{F}^0}\|v\|_{L_\infty(\ell_{{\bf x}}\backslash \ell_{{\bf x},\epsilon})} ^2\ud {\bf x}\\
					\le&\ 
					\epsilon\int_{\hat{F}^0}\|v\|_{L_\infty(\ell_{{\bf x}})}^2\ud {\bf x} \le
					32	\epsilon p^2\|v\|_{\hat{K}}^2, 
				\end{aligned}
			\end{equation}
			by Markov's inequality (see, e.g., \cite[Theorem 3.92]{schwab}) since the length of $\ell_{{\bf x}}$ is bounded from below by one. Selecting now $0<\epsilon\le (8p)^{-2}$, the result follows, by simply observing that $\|v\|_{\hat{K}}^2 -	\|v\|_{\hat{K}_\epsilon}^2=
			\|v\|_{\hat{K}\backslash \hat{K}_\epsilon}^2 $.
		\end{proof}
	} 
	\begin{example}\label{ex:osc}
		Consider $\hat{K}$ with $\phi({\bf x}) = 1+{\bf x}\cdot ({\bf 1}-{\bf x})+(16p)^{-2}\sin ( \alpha\pi{\bf 1} \cdot{\bf x})$, for some $\alpha\ge 1$, with ${\bf 1}:=(1,1,\dots,1)^T\in\mathbb{R}^{d-1}$. This is chosen so that 
		$
		\phi({\bf x})\in [1+{\bf x}\cdot ({\bf 1}-{\bf x})-\epsilon,1+ {\bf x}\cdot ({\bf 1}-{\bf x})+\epsilon],
		$
		for $\epsilon$ within the range required for the statement of Lemma \ref{new_chop} to hold. For sufficiently large $\alpha$, $\hat{K}$ is \emph{not} star-shaped with respect to $\hat{F}^0=[0,1]^{d-1}$.  
		Nevertheless, $\phi$ is sufficiently approximated by $\phi_2({\bf x}) := 1+{\bf x}\cdot ({\bf 1}-{\bf x})$, which is star-shaped with respect to $\hat{F}^0=[0,1]^{d-1}$. Thus, for $v\in\mathcal{P}_p(\hat{K})$, from Lemma \ref{basic_H1_L2}, we have 
		\[
		\|\nabla v\|_{\hat{K}}^2\le 2\|\nabla v\|_{\hat{K}_{\phi_2}}^2\le 4C_{\rm inv}^B(d,2)p^4\|v\|_{\hat{K}}^2.
		\]
		%
	\end{example}
	\vspace{-.2cm}
	\subsection{ {$L_\infty-L_2$ inverse estimate}}
	We continue by proving an $L_\infty-L_2$ inverse estimate for reference generalized prisms.
	\Rev{\vspace{-.1cm}
		\begin{lemma}
			\label{basic_Linfty_L2} 
			Let $\hat{K}\equiv\hat{K}_\phi\subset\mathbb{R}^d$ a reference generalized prism. Then, the inverse estimate 
			\begin{equation}\label{est_Linfty_L2}
				\|v\|_{L_\infty(\hat{K})}^2\le (32p^{2})^d\|v\|_{\hat{K}}^2,
			\end{equation}
			holds for all $v\in\mathcal{P}_p(\hat{K})$, $p\in\mathbb{N}$.
		\end{lemma}
	}
	\vspace{-.3cm}
	\Rev{	\begin{proof}
			{Let $\mathbf{x}_{\max}\in \hat{K}$ such that $	\|v\|_{L_\infty(\hat{K})}=|v(\mathbf{x}_{\max})|$. Then,  either 
				$\mathbf{x}_{\max}\in \hat{K}_0=[0,1]^d\subset \hat{K}$ or  	$\mathbf{x}_{\max}\in \hat{K}\backslash \hat{K}_0$. Let now $\hat{K}(\mathbf{x}_{\max})$ be the pyramid with vertex $\mathbf{x}_{\max}$ and base $\hat{F}^0=[0,1]^{d-1}$. If $\mathbf{x}_{\max}\in \hat{K}_0$, then, we have
				$
				\|v\|_{L_\infty(\hat{K})}^2\le 32^dp^{2d} 	\|v\|_{\hat{K}}^2$ (see, e.g., \cite[eq.~(3.6.4)]{schwab},) whereas if $\mathbf{x}_{\max}\in \hat{K}\backslash  \hat{K}_0$, we have, respectively,
				$
				|v(\mathbf{x}_{\max})|^2=			\|v\|_{L_\infty(\hat{K})}^2= 	\|v\|_{L_\infty(\hat{K}(\mathbf{x}_{\max}))}^2.
				$ 
				Since $\mathbf{x}_{\max}$ is a vertex, we employ an one-dimensional trace inverse estimate \cite{warburton2003constants} iteratively with respect to dimension, to deduce	$|v(\mathbf{x}_{\max})|\le \prod_{j=1}^d (p+j)\|v\|_{\hat{K}(\mathbf{x}_{\max})}$. Here we have used the fact that the dimensions of $\hat{K}(\mathbf{x}_{\max})$ are grater than $1$. Combining the two cases and taking the maximum constant, the result already follows.
			}
		\end{proof}
	}\vspace{-.3cm}
	{
		\begin{remark}		
			An inspection of the proof of Lemma \ref{basic_Linfty_L2} show that the only geometric assumption needed is that the curved face $\hat{F}$ is given as the graph of a Lipschitz function $\phi$. So Lemma \ref{basic_Linfty_L2} holds without assuming property 2) in Definition \ref{def:gen_prism}.
		\end{remark}\vspace{-.1cm}

		\subsection{ {Inverse estimates on general domains}}
		{We now extend the above inverse estimates to general curved polytopic elements $K\in\mathcal{T}$. To that end, we shall relax the concept of $p$-coverability of polytopic elements introduced in \cite{cangiani2013hp}, (see also \cite{cangiani2015hp,book}) from simplicial coverings of general-shaped elements $K\in\mathcal{T}$, to coverings involving affinely mapped generalized prisms. }
		{
			\begin{definition}\label{def_poly_assumption}
				An element $K\in\mathcal{T}$ is said to be {\em $p$-coverable} with respect to $p\in\mathbb{N}$, if there exists a set of $m_{K}\in \mathbb{N}$ generalized prisms $\hat{K}_j$ and corresponding affine maps $\Phi_j$, such that the mapped generalized prisms $\overline{K}_j:=\Phi_j(\hat{K}_j)$, $j=1,\dots, m_{K}$, form a, possibly overlapping, covering of $K$ with the additional properties
				\begin{equation}
					\Rev{{\rm dist}(\partial \overline K_j, K)} \le \mathfrak{h}_{\overline{K}_j}(8p)^{-2}
					\label{T_tilde_condition}
				\end{equation}
				and
				\begin{equation}
					|\overline{K}_j|\ge c_{as} |K|, \label{T_tilde_condition2}
				\end{equation}
				for all $j=1,\dots, m_{K}$, where $\mathfrak{h}_{\overline{K}_j}:=\sup_{{\bf x}\in\hat{F}^0 }|\Phi_j(\ell_{{\bf x},j})|$ and $c_{as}$ is a
				positive constant, independent of $K$ and of $\mathcal{T}$, \Rev{with ${\rm dist}(\partial \overline{K}_j,K):=\sup_{{\bf x}\in \partial \overline{K}}\inf_{{\bf y}\in K}|{\bf x}-{\bf y}|$  the one-sided Hausdorff
					distance of $\partial \overline{K}_j$ from $K$}, and $\ell_{{\bf x},j}:=\hat{K}_j\cap \{{\bf x}+\alpha {\rm e}_d:\alpha\in\mathbb{R}\}$.
			\end{definition}
		}

		The motivation for the above definition is the stability result for polynomials with respect to domain perturbation  {given in Lemma \ref{new_chop} above}. If $K$ is $p$-coverable,  {\eqref{T_tilde_condition} implies that} there exists a covering  {of affinely mapped generalized prisms} $\overline{K}_j$ and respective sub-prisms  $\underline{K}_j {:=\overline{K}_{j,\epsilon}}$, $0< \epsilon\le \mathfrak{h}_{\overline{K}_j}(8p)^{-2}$, $j=1,\dots,m_K$, such that $\underline{K}_j \subset K$. Then, we have, for any $v\in\mathcal{P}_p(K)$,
		\begin{equation}\label{subel_pcover}
			\frac{1}{2}	\ltwo{v}{\overline{K}_j}^{ {2}}\le  \ltwo{v}{\underline{K}_j}^{ {2}}\le\ltwo{v}{K}^{ {2}}.
		\end{equation}

		{We now show that \eqref{T_tilde_condition} is implied by Assumptions \ref{assumption_mesh} and \ref{ass:c1}. Therefore, $p$-coverability for an element satisfying Assumptions \ref{assumption_mesh} and \ref{ass:c1} is ensured under the validity of \eqref{T_tilde_condition2} only.}
		{
			\begin{lemma}\label{lem:tight_covering}
				Let $K\in\mathcal{T}$ satisfying Assumptions \ref{assumption_mesh} and \ref{ass:c1}. Then, there exists a set of $m_K\in\mathbb{N}$ generalized prisms $\hat{K}_j$ and corresponding affine maps $\Phi_j$, such that the mapped domains $\overline{K}_j:=\Phi_j(\hat{K}_j)$, $j=1,\dots,m_K$, form a cover of $K$ with the property
				\begin{equation}\label{tight_cover}
					\Rev{{\rm dist}(\partial \overline K_j, K)} \le \mathfrak{h}_{\overline{K}_j}(8p)^{-2}
				\end{equation}
				for all $j=1,\dots, m_{K}$, with the notation of Definition \ref{def_poly_assumption}.
			\end{lemma}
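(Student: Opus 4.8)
The plan is to build the covering explicitly as the union of two finite families: curved prisms that hug $\partial K$ from the inside, and axis-parallel boxes lying in the interior of $K$, arranged so that \emph{every} member of the covering is contained in $K$. Since then $\partial\overline{K}_j\subset K$ for all $j$, the protrusion of $\partial\overline{K}_j$ outside $K$, i.e.\ the quantity ${\rm dist}(K,\partial\overline{K}_j)$ as it enters \eqref{T_tilde_condition}, vanishes, so \eqref{tight_cover} holds \emph{a fortiori}.

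For the boundary family I would use Assumption \ref{ass:c1}: $\partial K=\bigcup_{\ell=1}^{L}\Sigma_\ell$ with each $\Sigma_\ell$ compact and $C^1$, whence the unit normal is uniformly continuous on each $\Sigma_\ell$. After refining the subdivision $\{F_i\}$ of Assumption \ref{assumption_mesh} --- permissible by keeping the apices ${\bf x}^0_i$ fixed and replacing the $K_{F_i}$ by sub-cones, exactly as in the partitioning argument in the proof of Lemma \ref{curved_inv_est} --- and intersecting with the $\Sigma_\ell$, I may assume each $F_i$ lies in a single $C^1$ piece and is small enough that, in suitably rotated and translated coordinates, it is the graph $x_d=g_i(x')$, $x'\in D_i$, of a $C^1$ function over a $(d-1)$-hyperrectangle $D_i$, with $\|\nabla g_i\|_{L_\infty(D_i)}$ as small as we please. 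Over such a patch I erect the inward layer $\overline{K}_i:=\{(x',x_d):x'\in D_i,\ g_i(x')\le x_d\le g_i(x')+h_i\}$; rescaling $D_i$ to $[0,1]^{d-1}$ and, in view of Remarks \ref{rem:contraction} and \ref{rho}, also rescaling the $x_d$-extent by an affine map $\Phi_i$, this becomes the image $\Phi_i(\hat{K}_{\phi_i})$ of a reference generalized prism in the sense of Definition \ref{def:gen_prism}, provided $h_i$ is chosen comparable to $\diam(D_i)$: this makes $[0,1]^d\subset\hat{K}_{\phi_i}$ and, together with the smallness of $\|\nabla g_i\|_{L_\infty(D_i)}$, keeps the flat base star-shaped with respect to the curved base. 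The containment $\overline{K}_i\subset K$ is where Assumption \ref{assumption_mesh} enters: since $K_{F_i}$ is star-shaped with respect to ${\bf x}^0_i$ and $\m({\bf x})\cdot{\bf n}({\bf x})>0$ on $F_i$, $K$ contains a one-sided cone of definite aperture over each point of $F_i$ pointing towards ${\bf x}^0_i$; choosing $\diam(D_i)$ and $h_i$ small enough places $\overline{K}_i$ inside the union of these cones.

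For the interior family, once the finitely many boundary prisms are fixed, a Lebesgue-number argument shows there is $\delta_0>0$ with $\bigcup_i\overline{K}_i\supset\{x\in K:{\rm dist}(x,\partial K)<\delta_0\}$ (arrange the $D_i$ to overlap laterally, and use that the $h_i$ are bounded below). Hence $K^\circ:=\{x\in K:{\rm dist}(x,\partial K)\ge\delta_0\}$ is a compact subset of $\operatorname{int}(K)$, covered by finitely many closed axis-parallel boxes contained in $\operatorname{int}(K)$; each box is the affine image of $[0,1]^d$, which is itself a reference generalized prism ($\phi\equiv1$, $\hat{r}=\hat{\rho}=1$). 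The two families cover $K$, use finitely many affinely mapped generalized prisms --- finitely many because each fixed $K$ has finitely many $F_i$ and $\partial K$ finitely many $C^1$ pieces, even though $n_K$ need not be uniformly bounded over $\mathcal{T}$ --- and lie entirely in $K$, which yields \eqref{tight_cover}.

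I expect the genuine difficulty to be the boundary layer near the lower-dimensional strata where distinct $C^1$ pieces $\Sigma_\ell$ meet. There $\partial K$ is only Lipschitz, the small-slope graph representation may fail on each individual piece (one piece pokes outside $K$ past the junction), and the interior cone of the Lipschitz domain can be narrow. One fix is to let the curved face of the prism follow the actual, kinked, merely Lipschitz boundary there, controlling its Lipschitz constant by the local slopes of the two incident pieces and reducing it by a further affine rescaling as in Remarks \ref{rem:contraction} and \ref{rho}; alternatively one covers a shrinking neighbourhood of each junction stratum by progressively smaller prisms. Since $\partial K$ has only finitely many $C^1$ pieces, the induced stratification (edges, corners, and so on) is finite and the process terminates; this is also exactly where the ``room inside $K$'' afforded by Assumption \ref{assumption_mesh} is needed. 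The construction on the smooth part of $\partial K$ and the interior boxes are routine.
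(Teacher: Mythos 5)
Your route is genuinely different from the paper's. The paper covers $K$ by a single family of ``long'' truncated prisms, each stretching from a flat hypercubical base placed near the star-centre ${\bf x}^0_j$ of Assumption \ref{assumption_mesh} out to a hypercubical tessellation element approximating the $C^1$ piece $(\partial K)_j$ of the boundary; these prisms are \emph{allowed to protrude} outside $K$, and the continuity of $({\bf x}-{\bf x}^0_j)\cdot{\bf n}({\bf x})$ on the compact $C^1$ piece yields a uniform $\delta_j>0$ which is then tuned so that the protrusion is at most $\mathfrak{h}_{\overline{K}_j}(8p)^{-2}$. You instead use thin boundary collars plus interior boxes, all contained in $K$, so that \eqref{tight_cover} holds with vanishing left-hand side; your reading of ${\rm dist}(K,\partial\overline{K}_j)$ as the protrusion of $\overline{K}_j$ outside $K$ is the one consistent with how \eqref{T_tilde_condition} is used to obtain \eqref{subel_pcover}, so this is legitimate for the pure existence statement. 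The trade-off is downstream: in Lemma \ref{curved_inv_est_h1l2} the constant $\rho_{\rm cov}(K)$ grows with $m_K$ and with $\rho_K/\rho_{\overline{K}_j}$, so the paper's few large prisms give a far better constant than your many thin collars and small junction prisms, even though the lemma itself does not ask for this.

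There is, however, one concrete defect in your construction. The layer $\overline{K}_i=\{(x',x_d): x'\in D_i,\ g_i(x')\le x_d\le g_i(x')+h_i\}$ is \emph{not} an affinely mapped reference generalized prism unless $g_i$ is affine: both of its bases are curved (parallel translates of the graph of $g_i$), whereas Definition \ref{def:gen_prism} requires one flat base $\hat F^0$, and affine maps preserve flatness. The repair is to place the flat base at constant height, $\overline{K}_i=\{(x',x_d): x'\in D_i,\ c_i\le x_d\le g_i(x')\}$ with $c_i<\min_{D_i}g_i$; this is precisely the configuration for which the ${\rm e}_d$-shrinking of Lemma \ref{new_chop} and \eqref{subel_pcover} is designed, containment in $K$ then follows from the local hypograph structure of the Lipschitz domain (rather than from the cone argument you attribute to Assumption \ref{assumption_mesh}), and the normalization ($\phi\ge 1$, $[0,1]^d\subset\hat K$, star-shapedness of $\hat F^0$ with respect to $\hat F$) is achieved by taking the $x_d$-thickness large relative to the oscillation of $g_i$ over $D_i$, in the spirit of Remarks \ref{rem:contraction} and \ref{rho}. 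Separately, at the junctions of the $C^1$ pieces your ``progressively smaller prisms'' alternative risks producing an infinite family; the Lipschitz-graph patching you mention first is the correct and sufficient route, since $K$ being a Lipschitz domain furnishes a finite cover of $\partial K$ by graph patches including the junction strata, and Definition \ref{def:gen_prism} only requires $\phi$ to be Lipschitz. With these repairs your argument goes through.
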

		}
		\begin{proof}
			{
				From Assumption \ref{ass:c1}, $\partial K$ is comprised of a finite number of closed (co-dimen\-sion one) $C^1$ surfaces $(\partial K)_j$, $j=1,\dots, \tilde{z}_K$, for some $\tilde{z}_K\in\mathbb{N}$. By possibly further subdividing the $(\partial K)_j$'s into subsets, say, $(\partial K)_j$, $j=1,\dots, z_K$, Assumption \ref{assumption_mesh}, ensures that for each of $(\partial K)_j$ there exist a point ${\bf x}^0_j\in K$ such that the curved simplex $K_{(\partial K)_j}\equiv K_{(\partial K)_j}({\bf x}^0_j)$ is star-shaped with respect to ${\bf x}^0_j$ and that $g_j({\bf x}):=({\bf x}-{\bf x}^0_j)\cdot {\bf n}({\bf x})>0$ for any ${\bf x}\in (\partial K)_j$. (More than one $(\partial K)_j$ are allowed to share the same ${\bf x}^0_j$.) Since  $(\partial K)_j$ is $C^1$, $g_j$ is continuous on $(\partial K)_j$ and, thus, there exists a positive number $\delta_j$, such that $g_j({\bf x})\ge \delta_j$.
			}	
			
			{Now, for any ${\bf \tilde{x}}^0_j\in\mathbb{R}^d$, with $ |{\bf x}^0_j-{\bf \tilde{x}}^0_j|<\delta_j$, we have 
				\[
				|g_j({\bf x})-({\bf x}-{\bf \tilde{x}}^0_j)\cdot {\bf n}({\bf x})|\le |{\bf x}^0_j-{\bf \tilde{x}}^0_j|<\delta_j,
				\]
				and, therefore, 
				$\tilde{g}_j({\bf x}):=({\bf x}-{\bf \tilde{x}}^0_j)\cdot {\bf n}({\bf x})>0$ for any ${\bf x}\in (\partial K)_j$. 
				Hence, $(\partial K)_j$ is star-shaped with respect to $B({\bf x}^0_j,\delta_j)$ \Rev{in $K$; that is any line connecting any point of $(\partial K)_j$ with a point of $B({\bf x}^0_j,\delta_j)$ lies wholly in $K$}. This implies that $K_{(\partial K)_j}$ is star-shaped \Rev{in $K$} with respect to any \Rev{subset of $B({\bf x}^0_j,\delta_j)$ and, in particular, with respect to any} $(d-1)$-hypercube passing through ${\bf x}^0_j$ and contained in $B({\bf x}^0_j,\delta_j)$. In general, however, $B({\bf x}^0_j,\delta_j)\not\subset K$, but we have $\Rev{{\rm dist}( \partial B({\bf x}^0_j,\delta_j),K)}  \le |{\bf x}^0_j-{\bf \tilde{x}}^0_j|<\delta_j$.  For the boundary pieces $(\partial K)_j$ with $B({\bf x}^0_j,\delta_j)\subset K$, we fix $\delta_j$ to its largest possible value ensuring $B({\bf x}^0_j,\delta_j)\subset K$. If, however, $B({\bf x}^0_j,\delta_j)\not\subset K$, we select $\delta_j$ small enough, so that $\Rev{{\rm dist} (\partial B({\bf x}^0_j,\delta_j),K)}\le\mathfrak{h}_{\overline{K}_j}(8p)^{-2}$. }
			
			{On the other hand, the $C^1$ smoothness of $(\partial K)_j$ ensures that there exists a finite tessellation comprising of diagonally scaled and rotated $(d-1)$-hypercubes approximating $(\partial K)_j$ to a desired accuracy, say $\delta_j/2$.  Consider now the truncated prisms intersecting $(\partial K)_j$ defined uniquely by the $2^{d-1}$ vertices of each element of the tessellation and the $2^{d-1}$ vertices of a second $(d-1)$-hypercubical base contained in $B({\bf x}^0_j,\delta_j)$ and passing through ${\bf x}^0_j$. The union of the latter generalized prisms covers $K_{(\partial K)_j}$ within a distance $\delta_j$.  Considering the corresponding construction for all $j$, we conclude the construction of a finite cover of $K$ by affinely mapped generalized prisms such that \eqref{tight_cover} holds.
			}
		\end{proof}\vspace{-.4cm}
		\begin{remark}
			The purpose of the construction in the proof of Lemma \ref{lem:tight_covering} is to assert the existence of at least \emph{one} covering with the required properties, and not to construct the `optimal' one.   
		\end{remark}

		\begin{lemma}\label{curved_inv_est_el}
			Let  $K\in\mathcal{T}$  {Lipschitz satisfying} Assumption \ref{assumption_mesh}. Then, for each $v\in\mathcal{P}_{p}(K)$, we have the inverse inequality
			\begin{equation}\label{eq:inv_gen_el}
				\|v\|_{F_i}^2\le \Rev{\mathcal{C}_{\rm INV}(p,K,F_i)} \frac{(p+1)(p+d)|F_i|}{|K|} \|v\|_{K}^2,
			\end{equation}
			with
			\begin{equation}
				\Rev{\mathcal{C}_{\rm INV}(p,K,F_i)
					: = \left\{
					\begin{array}{ll}
						\min \big\{\mathcal{C}_{\rm reg}(K,F_i) ,2c_{as}^{-1} 32^d p^{2(d-1)} \big\},  & \text{$K$ $p$-coverable,} 
						\vspace{2mm}	\\ 
						\mathcal{C}_{\rm reg}(K,F_i), &  \text{otherwise,} 
					\end{array}  
					\right. \label{inverse_constant_curve}}
			\end{equation}
			with \Rev{$\mathcal{C}_{\rm reg}(K,F_i):=|K|/\big(|F_i|\sup_{\uu{x}^0_i\in K}\min_{\mathbf{x}\in F_i}(\m\cdot\mathbf{n})\big)$, and} $c_{as}>0$  {as in Def.~\ref{def_poly_assumption}}.
		\end{lemma}
		\begin{proof}
			If $K$ is not $p$-coverable, using \eqref{eq:inv_gen}, we simply have 
			\begin{equation}\label{eq:inv_in_nopcov}
				\|v\|_{F_i}^2\le\frac{(p+1)(p+d)}{\displaystyle \min_{\mathbf{x}\in F_i}(\m\cdot\mathbf{n})}  \|v\|_{K_{F_i}}^2 \le \frac{(p+1)(p+d)|F_i|}{|K|}\Rev{\mathcal{C}_{\rm reg}(K,F_i)}\|v\|_{K}^2.
			\end{equation}
			If, on the other hand, $K$ is $p$-coverable, then $K_{F_i}\subset K\subset \cup_{j=1,\dots,m_K^{}} \overline{K}_j$ and, thus,
			\[
			\|v\|_{F_i}^2 \le |F_i|\|v\|_{L_{\infty}(F_i)}^2\le |F_i|\|v\|_{L_{\infty}(K_{F_i}^{})}^2\le|F_i| \max_{j=1,\dots,m_K^{}}\|v\|_{L_{\infty}(\overline{K}_j)}^2.
			\]
			{Now, Lemma \ref{basic_Linfty_L2} (together with an elementary scaling argument),} along with \eqref{T_tilde_condition} and \eqref{subel_pcover}, imply
			\[
			\|v\|_{L_{\infty}(\overline{K}_j)}^2\le  \Rev{32^d p^{2d} } |\overline{K}_j|^{-1}\|v\|_{\overline{K}_j}^2\le  2 \Rev{(32 p^{d})^2 } c_{as}^{-1}|K|^{-1}\|v\|_{K}^2.
			\]
			Combining the last two estimates, taking the supremum over $\uu{x}^0_i\in K$, the inverse estimate constant is then given by the minimum of the two estimates.
		\end{proof}

		The above result generalizes both \cite[Lemma 11]{book}
		and \cite[Lemma 4.9]{CGS18} in a number of ways.  {The coverings are now allowed to consist of curved domains. Also,} elements with arbitrary number of (curved) faces are now admissible and an earlier hypothesis on uniform boundedness of $m_K^{}$ across the mesh has now been removed by a more careful analysis. Note that, when $K\in\mathcal{T}$ is a polytopic element with straight faces, Lemma \ref{curved_inv_est_el} collapses to \cite[Lemma 11]{book}  {with improved constants.}

		\begin{remark}\label{remark:optimisation}
			The sub-division $\{F_i\}_{i=1}^{n_K^{}}$ of the (curved) element boundary $\partial K$ is typically \emph{not} unique.  We can seek to minimize the coefficient \eqref{inverse_constant_curve} by considering different candidates for $\{F_i\}_{i=1}^{n_K}$. However, such optimization would be practically beneficial only for rather ``exotic'' element shapes as, in most cases, we can simply resort to \eqref{stronger_assumption}. Of course, extremely general curved ``exotic'' element shapes must be used only when deemed beneficial for the particular problem at hand. In such cases, a basic geometric study for improving the constant \eqref{inverse_constant_curve} (and, therefore, as we shall see below, the dG discontinuity-penalization  {function}, cf. Remark \ref{remark:sigma} below) may be in order. In any case, Lemma \ref{curved_inv_est_el} is sharp for each given subdivision $\{F_i\}_{i=1}^{n_K^{}}$ and directly generalizes the inverse estimates in \cite{book}.
		\end{remark}

		Next, we present an $H^1-L_2$-inverse inequality  {for polynomials on a general curved element $K\in\mathcal{T}$. }
		
		\begin{lemma}\label{curved_inv_est_h1l2}
			Let $ K\in\mathcal{T}$ satisfy Assumptions \ref{assumption_mesh}  {and \ref{ass:c1}}.  
			Then, for each $v\in\mathcal{P}_{p}(K)$, the inverse estimate
			\begin{equation}\label{H1_L2_est}
				\|\nabla v\|_{K}^2\le  {\mathcal{C}^B_{\rm INV}}(p,K) \frac{p^4}{\rho_{K}^2} \|v\|_{K}^2,
			\end{equation}
			holds, with $\rho_\omega$ denoting the radius of the largest inscribed circle of a domain  $\omega\subset\mathbb{R}^d$,  and 
			{
				\begin{equation}
					\mathcal{C}^B_{\rm INV}(p,K)
					: = \left\{
					\begin{array}{ll}
						\Revthree{4}	\min\big\{\rho_{\rm cov}(K),\rho_{\rm p-cov}(p,K) \big\},  & \text{if $K$ $p$-coverable} 
						\vspace{2mm}	\\ 
						\Revthree{4}	\rho_{\rm cov}(K), &  \text{otherwise,} 
					\end{array}  
					\right. \label{inverse_constant_H1}
				\end{equation}
			}
			\Rev{with  			\begin{equation}\label{rho_cov}
					\rho_{\rm cov}(K):= \sum_{j=1}^{m_K^{}} C_{\rm inv}^B(d,\hat{r}_j)\Big(\frac{ \hat{r}_j\rho_K}{\rho_{\overline{K}_j}}\Big)^{2},
				\end{equation}
				and
				\begin{equation}\label{rho_pcov}
					\rho_{\rm p-cov}(p,K):=  c_{as}^{-1} (32)^d	(p-1)^{2d} \max_{1\le \ell\le m_K}C_{\rm inv}^B(d,\hat{r}_\ell)	\Big(\frac{\hat{r}_\ell\rho_K}{\rho_{\overline{K}_\ell}}\Big)^{2},
				\end{equation} 
				for $\overline{K}_\ell$, $\ell=1,\dots,m_K^{}$, cover of $K$ consisting of affinely mapped generalised prisms.}
		\end{lemma}
		\begin{proof}  {\Rev{From Lemma \ref{lem:tight_covering}, there exists a cover of $K$,} consisting of affinely mapped generalized prisms $\overline{K}_j$, $j=1,\dots,m_K^{}$. Thus,} for $v\in\mathcal{P}_{p}(K)$,  {Lemma \ref{basic_H1_L2}, (with a standard affine scaling)} and \eqref{subel_pcover}  imply:
			\[
			\|\nabla v\|_{K}^2\le\sum_{j=1}^{m_K^{}} 	\|\nabla v\|_{\overline{K}_j}^2\le \sum_{j=1}^{m_K^{}}  {C_{\rm inv}^B(d,\hat{r}_j)}\frac{ {\Revthree{2}\hat{r}_j^2}p^{4}}{\rho_{\overline{K}_j}^2}\|v\|_{\overline{K}_j}^2 \le  \Revthree{4} \sum_{j=1}^{m_K^{}}  {C_{\rm inv}^B(d,\hat{r}_j)}\frac{ {\hat{r}_j^2}p^{4}}{\rho_{\overline{K}_j}^2}\|v\|_{K}^2,
			\]
			{with $\hat{r}_j$ denoting the $\hat{r}$ of $\overline{K}_j$ as per Definition \ref{def:gen_prism}.} 
			Thus, we have
			\begin{equation}\label{h1l2_one}
				\|\nabla v\|_{K}^2\le  {\rho_{\rm cov}(K)} \frac{p^4}{\rho_K^2}\|v\|_K^2.
			\end{equation}
			Note that $\rho_{\rm cov}(K)$ grows with $\rho_K/\min_{j=1,\dots,m_K^{}}\rho_{\overline{K}_j}$ growing.

			On the other hand,  {if $K$ is $p$-coverable,} there exists $\ell\in \{1,\dots,m_K^{}\}$ such that 
			$
			\|\nabla v\|_{K}^2\le|K|\|\nabla v\|_{L_{\infty}(K)}^2\le 
			|K|\|\nabla v\|_{L_{\infty}(\overline{K}_\ell)}^2$. 
			{Using now Lemmata \ref{basic_Linfty_L2} and \ref{basic_H1_L2} (with scaling), as well as \eqref{T_tilde_condition2},} we deduce, respectively,
			\begin{equation}\label{h1l2_two}
				\begin{aligned}
					\|\nabla v\|_{L_{\infty}(\overline{K}_\ell)}^2\le&\ \Rev{32^d	(p-1)^{2d}}|\overline{K}_{\ell}|^{-1}\|\nabla v\|_{\overline{K}_\ell}^2\\
					\le&\ 32^d	(p-1)^{2d}p^{4} C_{\rm inv}^B(d,\hat{r}_\ell)\Revthree{2}\hat{r}_\ell^2\rho_{\overline{K}_\ell}^{-2}	|\overline{K}_{\ell}|^{-1}
					\| v\|_{\overline{K}_\ell}^2.
				\end{aligned}
			\end{equation}
			
			The result already follows by combining \eqref{h1l2_one} and \eqref{h1l2_two}.
		\end{proof}\vspace{-.0cm}
		{
		}
		{In the special case of an element $K$ being star-shaped with respect to a contained ball, we can have a more precise statement in terms of the constants involved.}
		{
			\begin{corollary}
				Let $K\subset \mathbb{R}^d$ domain which is star-shaped with respect to a ball $B({\bf x},\rho_K)\subset K$, ${\bf x}\in K$. Then, for any $v\in\mathcal{P}_p(K)$, we have the inverse estimate
				\[
				\|\nabla v\|_{K}^2\le C(d) \Big(\frac{h_K}{\rho_{K}}\Big)^{d+5} \frac{ p^4}{\rho_{K}^2}\|v\|_{K}^2,
				\]
				for some universal constant $C(d)>0$ that can be estimated explicitly. Thus, if additionally, $K$ is shape-regular, i.e., $h_K\sim \rho_K$, we retrieve the classical inverse estimate $\|\nabla v\|_{K}^2\le C p^4/h_{K}^2\|v\|_{K}^2,$ with $C$ now also dependent on the shape-regularity constant.
			\end{corollary}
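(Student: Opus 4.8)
The plan is to deduce \eqref{H1_L2_est} for this special class of elements from Lemma~\ref{curved_inv_est_h1l2} by bounding the first branch of the geometric constant \eqref{inverse_constant_H1}, i.e.\ $\mathcal{C}^B_{\rm INV}(p,K)\le\rho_{\rm cov}(K)$ from \eqref{rho_cov}, through an \emph{explicit} covering of $K$ by affinely mapped reference generalized prisms all lying inside $K$. First I would translate so that the largest inscribed ball is $B(\mathbf{0},\rho_K)$ and record the two consequences of star-shapedness with respect to the \emph{whole} ball that drive the construction: for a.e.\ boundary point $\mathbf{y}$ with outward unit normal $\mathbf{n}(\mathbf{y})$ the supporting hyperplane at $\mathbf{y}$ separates $\mathbf{y}$ from $B(\mathbf{0},\rho_K)$, whence $\mathbf{y}\cdot\mathbf{n}(\mathbf{y})\ge\rho_K$ (so that Assumption~\ref{assumption_mesh}, indeed \eqref{stronger_assumption} with $c_{sh}\sim\rho_K/h_K$, holds with the single centre $\mathbf{x}^0_i=\mathbf{0}$); and for every $\mathbf{z}\in B(\mathbf{0},\rho_K)$ and every $\mathbf{w}\in K$ the whole segment $[\mathbf{z},\mathbf{w}]$ lies in $K$, since it is contained in $\mathrm{conv}(B(\mathbf{0},\rho_K)\cup\{\mathbf{w}\})$.

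For the covering, fix a $(d-1)$-hypercube $F^0$ of side $s_0\sim_d\rho_K$, centred at $\mathbf{0}$ and contained in $B(\mathbf{0},\rho_K/2)$, and partition the unit sphere $S^{d-1}$ into $N$ spherical caps of angular radius $\sim\rho_K/h_K$, so that $N\le C(d)(h_K/\rho_K)^{d-1}$. For the $k$-th cap, with centre direction $\mathbf{e}_k$ and $H_k:=\{\mathbf{x}:\mathbf{x}\cdot\mathbf{e}_k=0\}$, take a rotated copy $F^0_k\subset H_k$ of $F^0$ and set
\[
\overline{K}_k:=\big\{\mathbf{z}+t\mathbf{e}_k:\mathbf{z}\in F^0_k,\ 0\le t\le\tau_k(\mathbf{z})\big\},\qquad \tau_k(\mathbf{z}):=\sup\{t\ge 0:\mathbf{z}+t\mathbf{e}_k\in K\}.
\]
Using the two facts above together with $F^0_k\subset B(\mathbf{0},\rho_K/2)$, one checks that $\overline{K}_k\subset K$, that $\tau_k$ is a finite Lipschitz function with $\rho_K/4\le\tau_k\le 2h_K$ on $F^0_k$, and that $F^0_k$ is star-shaped with respect to the graph of $\tau_k$; hence, after the affine map consisting of a rotation, a translation and a uniform dilation by $s_0$, one obtains $\overline{K}_k=\Phi_k(\hat{K}_{\phi_k})$ for a genuine reference generalized prism (Definition~\ref{def:gen_prism}, using Remark~\ref{rem:contraction} to lift the contraction restriction on $\phi_k$) with $\hat r_k\le C(d)h_K/\rho_K$, $\hat\rho_k=1$ by Remark~\ref{rho}, and $\rho_{\overline K_k}\sim_d\rho_K$ (it contains a hypercube of side $\sim\rho_K$ and sits inside $K$). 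That $\bigcup_k\overline{K}_k\supseteq K$ is the elementary observation that a point $\mathbf{y}\in K$ whose direction lies in the $k$-th cap projects orthogonally onto $H_k$ into $F^0_k$, precisely because the cap radius was chosen $\sim\rho_K/h_K$. Since each $\overline{K}_k\subset K$, the perturbation step \eqref{subel_pcover} is not even needed here: $\|v\|_{\overline K_k}\le\|v\|_K$ directly.

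Finally I would feed this covering into the estimates proving Lemma~\ref{curved_inv_est_h1l2} (equivalently: apply Lemma~\ref{basic_H1_L2} on each $\hat{K}_{\phi_k}$ with an affine scaling and sum). The cardinality contributes $N\le C(d)(h_K/\rho_K)^{d-1}$; for each prism $C^B_{\rm inv}(d,\hat r_k)\le C(d)(h_K/\rho_K)^{3}$ and $(\hat r_k\rho_K/\rho_{\overline K_k})^{2}\le C(d)(h_K/\rho_K)^{2}$; collecting the powers of $h_K/\rho_K$ in \eqref{rho_cov} then gives $\mathcal{C}^B_{\rm INV}(p,K)\le\rho_{\rm cov}(K)\le C(d)(h_K/\rho_K)^{d+5}$ with all constants explicit, and \eqref{H1_L2_est} delivers the stated estimate. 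The shape-regular assertion is immediate, since $h_K\sim\rho_K$ turns $(h_K/\rho_K)^{d+5}$ and $\rho_K^{-2}$ into $1$ and $h_K^{-2}$ up to the shape-regularity constant, which is absorbed into $C$.

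I expect the genuine work to be the covering construction of the middle paragraph: one must guarantee simultaneously that the radial prisms $\overline{K}_k$ (a) cover \emph{all} of $K$, which pins the cap radius to $\sim\rho_K/h_K$ so that orthogonal projections of points at distance up to $h_K$ still land inside $F^0_k$; (b) are contained in $K$ and are honest affinely mapped reference generalized prisms whose flat base is star-shaped with respect to the curved top --- this is exactly where star-shapedness with respect to the full ball, rather than a single point, is indispensable; and (c) have $\hat r_k$, $\rho_{\overline K_k}$ and $N$ controlled purely by $d$ and $h_K/\rho_K$. Everything afterwards is bookkeeping in \eqref{rho_cov}. Note also that, in contrast with the general hypotheses of Lemma~\ref{curved_inv_est_h1l2}, Assumption~\ref{ass:c1} is not required here, since the covering is built by hand and each piece lies inside $K$.
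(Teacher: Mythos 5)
Your proposal is correct and follows essentially the same route as the paper's proof: cover $K$ by $O\big((h_K/\rho_K)^{d-1}\big)$ right generalized prisms whose hypercubical bases are rotations of a hypercube inscribed in $B({\bf x},\rho_K)$, apply Lemma \ref{basic_H1_L2} with affine scaling on each, use $\hat r_j\lesssim h_K/\rho_K$ and $\rho_{\overline K_j}\gtrsim\rho_K$, and sum. Your write-up is merely more explicit about the cap decomposition and the verification that each prism is an admissible reference generalized prism contained in $K$, which the paper leaves terse.
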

		}
		
		{
			\begin{proof}
				We have $B({\bf x},\rho_K)\subset K\subset B({\bf x}, h_K)$. A comparison of the area of the largest $(d-1)$-hypercube contained in $B({\bf x},\rho_K)$, given by $\rho_K^{d-1}/2^{(d-1)/2}$, with the surface of $B({\bf x}, h_K)$, shows that we can cover $K$ using $z:=\lfloor 2^{(d+1)/2}\pi^{d/2} h_K^{d-1}/(\Gamma(d/2)\rho_K^{d-1})\rfloor +1$ mapped right generalized prisms $K_j$, $j=1,\dots, z$, whose bases are given by rotations of the largest $(d-1)$-hypercube contained in $B({\bf x},\rho_K)$. So, we have
				\[
				\|\nabla v\|_{K}^2\le \sum_{j=1}^z\|\nabla v\|_{K_j}^2\le \sum_{j=1}^zC_{\rm inv}^B(d,\hat{r}_j)\frac{\Revthree{2}\hat{r}_j^2 p^4}{\rho_{K_j}^2}\|v\|_{K_j}^2;
				\] here we have used scaling via $\Phi_j:\hat{K}_j\to K_j$ affine mapping $[0,1]^{d}$ to a rotation of the largest $d$-hypercube contained in $B({\bf x},\rho_K)$. Since each $K_j$ is right, we have $\hat{r}_j\le \lfloor h_K/\rho_{K_j}\rfloor +1$, for all $j=1,\dots,z$. Also, from the star-shapedness with respect to $B({\bf x},\rho_K)$, we have $\rho_{K_j}\ge \rho_{K}/2$. Combining the above, we deduce
				\[
				\|\nabla v\|_{K}^2\le \Revthree{8}C_{\rm inv}^B(\lfloor h_K/\rho_{K_j}\rfloor +1,d)(\lfloor h_K/\rho_{K_j}\rfloor +1)^2\frac{ p^4}{\rho_{K}^2}\sum_{j=1}^z\|v\|_{K_j}^2.
				\]
				Using the (pessimistic) bound $\sum_{j=1}^z\|v\|_{K_j}^2\le z \|v\|_{K}^2$, and combining the numerical constants, the result follows.
			\end{proof}
		}
		
		{
			The last result holds under weaker domain assumptions compared to \cite[Theorem 1]{Kroo1} and, in contrast to the main result in \cite{Kroo2},  it offers explicit dependence on the domain size in the case of piecewise $C^1$ domains. We also note \cite[Theorem 3]{Kroo1}, which provides a similar bound for the special case of $K$ being a $d-$ellipsoid, in conjunction with John's Ellipsoid Theorem.  It is interesting to investigate the extension of the above inverse estimates with explicit constants to cuspoidal domains in the spirit of \cite{Kroo2}; this will be considered elsewhere.
		}
		
		\begin{example}\label{example_circ_2}
			We revisit  {Example \ref{inv_circle} for $d=2$, with $K=B(0,R)$ a} circular element with radius $R$.  {Let $K_1=B(0,R)\cap\{(x_1,x_2): x_1\in\mathbb{R},\ -\sqrt{2}R/2\le x_2\le \sqrt{2}R/2\}$ and $K_2=B(0,R)\cap\{(x_1,x_2): -\sqrt{2}R/2\le x_1\le \sqrt{2}R/2,\ x_2\in\mathbb{R}\}$, so that $K=K_1\cup K_2$. We further subdivide each $K_i$ in half to form prisms with flat base; for each of these, we can select $\hat{r}=2$. Thus, Lemma \ref{curved_inv_est_h1l2} implies 
				\[
				\|\nabla v\|_{K}^2
				\le \mathcal{C}_{\rm INV}^B\frac{p^4}{R^2}\|v\|_{K}^2,
				\] 
				with $\mathcal{C}_{\rm INV}^B\le \Revthree{2304}$. The constant \Revthree{in this special case} can be improved considerably upon taking advantage of the circle's symmetries.} 
		\end{example}

		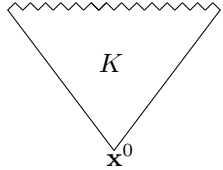
\begin{figure}[h]
			\centering
			\setlength{\unitlength}{2cm}
			\begin{picture}(-2,.9)
				\put(-1,0){\line(3,4){.7}}
				\put(-1,0){\line(-3,4){.7}}
				
				\put(-1.7,.93){\line(1,1){.05}}
				\put(-1.65,.98){\line(1,-1){.05}}
				\put(-1.6,.93){\line(1,1){.05}}
				\put(-1.55,.98){\line(1,-1){.05}}
				\put(-1.5,.93){\line(1,1){.05}}
				\put(-1.45,.98){\line(1,-1){.05}}
				\put(-1.4,.93){\line(1,1){.05}}
				
				\put(-1.3,.93){\line(-1,1){.05}}
				\put(-1.25,.98){\line(-1,-1){.05}}
				\put(-1.2,.93){\line(-1,1){.05}}
				\put(-1.15,.98){\line(-1,-1){.05}}
				\put(-1.1,.93){\line(-1,1){.05}}
				\put(-1.05,.98){\line(-1,-1){.05}}
				\put(-1.1,.93){\line(-1,1){.05}}
				\put(-1.05,.98){\line(-1,-1){.05}}
				\put(-1,.93){\line(-1,1){.05}}
				\put(-.95,.98){\line(-1,-1){.05}}
				\put(-.9,.93){\line(-1,1){.05}}
				\put(-.85,.98){\line(-1,-1){.05}}
				\put(-.8,.93){\line(-1,1){.05}}
				\put(-.75,.98){\line(-1,-1){.05}}
				\put(-.7,.93){\line(-1,1){.05}}
				\put(-.65,.98){\line(-1,-1){.05}}
				\put(-.6,.93){\line(-1,1){.05}}
				\put(-.55,.98){\line(-1,-1){.05}}
				\put(-.5,.93){\line(-1,1){.05}}
				\put(-.45,.98){\line(-1,-1){.05}}
				\put(-.4,.93){\line(-1,1){.05}}
				\put(-.35,.98){\line(-1,-1){.05}}
				\put(-.3,.93){\line(-1,1){.05}}
				
				\put(-1.1,.52){$K$}
				
				\put(-1.05,-.1){$\uu{x}^0$}
				
			\end{picture}
			\caption{Example \ref{example_rough}. $K\in\mathcal{T}$ with `multiscale' boundary behaviour.}
			\label{fig:rough_triangle}
		\end{figure}
		\begin{example}\label{example_rough}
			Let $d=2$, and consider the polygonal element $K\in\mathcal{T}$ with `multiscale' boundary behaviour depicted in Figure \ref{fig:rough_triangle}. Denoting by $r$ the length of each of the (equal length) $n$ small faces and with $h_K$ its diameter, we consider the case $r\ll h_K$. 
			If $r< h_K/p^2$, we can cover $K$ by one triangle {, namely,} the smallest simplex containing $K$.  {Then} $K$ is $p$-coverable and $ {\mathcal{C}_{\rm INV}^B}(p,K)$  {remains bounded, independently of $n$}. Hence, when the two geometric scales $h_K$ and $r$ are significantly different, $K$ is essentially a simplex in this context. 
			
			On the other hand, for $p$ large enough and fixed $r$ and $n$, we have $r> h_K/p^2$ and, hence, we cannot cover $K$ as before. Instead, we consider a family of $n$ non-overlapping simplices $\overline{K}_j\subset K$, each defined by one `small' face of length $r$ and the vertex $\uu{x}^0$. Then, we have $c_{as}=n^{-1}$ in Definition \ref{def_poly_assumption} and $\rho_{\overline{K}_j} \sim h_K/n$. Since also $\rho_K\sim h_K$ and $q_\ell=1$, we compute
			$
			{\mathcal{C}_{\rm INV}^B}(p,K)\sim n^{-1}. 
			$ This is reasonable, as sufficiently high polynomial degree $p$ basis functions can resolve the scale of the `sawtooth' face ensemble.
		\end{example}
		
		\subsection{ {Best approximation estimates}} We now turn to $hp$-version polynomial approximation bounds over general domains. The setting here remains essentially unchanged compared to the case of just polytopic elements presented in \cite{cangiani2013hp,book}. More specifically, under a mild set of covering assumptions and upon postulating the existence of so-called function space domain extension operators, we are able to apply $hp$-version best approximation results in various seminorms.

		\begin{definition}\label{ch3:definition_mesh_covering}
			Given a mesh $\mathcal{T}$, we define a \emph{covering} $\coveringmesh = \{ \mathcal{K} \}$ of $\mathcal{T}$ to be a   
			set of open shape-regular $d$--simplices  $\mathcal{K}$, such that for each $K\in\mathcal{T}$, 
			there exists a $\mathcal{K}\in\coveringmesh$ with $K\subset\mathcal{K}$. 
			For a given $\coveringmesh$, we define the \emph{covering domain $\bar{\Omega}_{\sharp}:=\cup_{\mathcal{K}\in\coveringmesh}\bar{\mathcal{K}}$.}
		\end{definition}

		\begin{figure}[h]
			\centering
			\setlength{\unitlength}{1.2cm}
			\begin{picture}(-1,1.6)
				\cbezier(0, 0)(-0.3, 0.65)(0.3, 0.4)(-.1, 1)	
				\cbezier(0, 0)(-0.4, 0.2)(-0.8, -0.2)(-1.1, 0)
				\cbezier(-1.1, 0)(-0.8, 0.1)(-0.7, 0.45)(-.7, .5)
				\cbezier(-.7, .5)(-0.8, 0.5)(-0.9, 0.6)(-1, .9)
				\cbezier(-1, .9)(-0.6, 1.5)(-0.3, 0.7)(-.1, 1)

				\put(-.4,.5){\makebox(0,0){$\color{black} K$}}
				
				\put(-.5,1.1){\makebox(0,0){$\color{black} \mathcal{K}$}}
				
				\put(-.033,-.01){\vx}
				\put(-.12,.97){\vx}
				\put(-1.12,-.019){\vx}
				\put(-1.02,.88){\vx}
				\put(-.72,.47){\vx}

				\put(.56,-.07){\line(-1,0){2.18}}
				\put(.56,-.07){\line(-3,5){1.085}}
				\put(-1.61,-.07){\line(3,5){1.085}}

			\end{picture}

			\caption{A simplex $\mathcal{K}\in\mathcal{T}^{\sharp}$ covering an element $K\in\mathcal{T}$.} 	\label{fig_covering_anis}
		\end{figure}
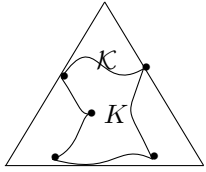

		For illustration, in Figure~\ref{fig_covering_anis} we show a single two-dimensional curved element 
		$K\in\mathcal{T}$, along with a covering simplex $\mathcal{K}\in\coveringmesh$ 
		with $K\subset\mathcal{K}$.
		
		\begin{assumption}\label{assumption_overlap}
			For a given mesh $\mathcal{T}$, we postulate the existence of a covering $\coveringmesh$, and of a (global) constant 
			$\mathcal{O}_{\Omega}\in\mathbb{N}$, independent of the mesh parameters, such that
			$$
			\max_{K \in \mathcal{T}} \mbox{card} 
			\Big\{ K' \in \mathcal{T} : 
			K' \cap \mathcal{K} \neq \emptyset, ~\mathcal{K}\in\coveringmesh ~\mbox{ such that } ~K\subset\mathcal{K} \Big\} \leq {\mathcal{O}}_{\Omega}.
			$$
			For such $\mathcal{T}^\sharp$, we further assume that
			$
			h_{\mathcal{K}}:=\diam(\mathcal{K})\le C_{\diam} h_{K},
			$ 
			for all pairs $K\in\mathcal{T}$, $\mathcal{K}\in\coveringmesh$, with $K\subset\mathcal{K}$, for a (global) constant $C_{\diam}>0$, uniformly with respect to the mesh size $h_K$.
		\end{assumption}

		\begin{remark}
			Assumption~\ref{assumption_overlap} ensures the shape--regularity 
			of the mesh covering $\mathcal{T}^\sharp$ only. Shape-regularity of the mesh $\mathcal{T}$ is \emph{not} assumed. We refer to Figure 3.6 in \cite{book} for an example on how these two concepts may differ considerably.
		\end{remark}
		
		The validity of Assumption \ref{assumption_overlap} allows for the \Rev{use} known $hp$--version approximation estimates on simplicial elements~\cite{Babuska-Suri:RAIRO:1987,Babuska-Suri:SINUM:1987,schwab}, on each $\mathcal{K}$ and, subsequently restrict the error over $K\subset \mathcal{K}$. However, it requires to extend the exact solution $u$ into $\Omega_{\sharp}$ in a stable fashion. To that end, we shall use the following classical result.
		
		\begin{theorem}[\cite{stein}] \label{thm-extension}
			Let $\Omega$ be a domain with a Lipschitz boundary. Then there exists a linear extension
			operator $\frak{E}:H^s(\Omega) \mapsto H^s({\mathbb R}^d)$, $s \in {\mathbb N}_0$, such that
			$\frak{E}v|_{\Omega}=v$ and
			$
			\| \frak{E} v \|_{H^s({\mathbb R}^d)} \leq C_{\frak{E}} \| v \|_{H^s(\Omega)},
			$
			where the constant $C_{\frak{E}}>0$ depends only on $s$, $\Omega$.
		\end{theorem}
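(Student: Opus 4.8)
The statement is the classical Stein extension theorem, so the plan is to reconstruct Stein's \emph{universal} extension operator --- a single linear operator bounded simultaneously on all the integer Sobolev scales --- and then read off the $H^s$ ($p=2$) case. (An alternative would be the Calder\'on construction via singular integrals, but Stein's is cleaner and already delivers the full range $s\in\mathbb{N}_0$ with one operator.)

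First I would \emph{localise}. Cover $\overline{\Omega}$ by finitely many balls $B_0,B_1,\dots,B_N$ with $\overline{B_0}\subset\Omega$ and, for $j\ge1$, such that after a rigid motion the set $\Omega\cap B_j$ coincides with a \emph{special Lipschitz domain} $\Omega_j=\{(x',x_d):x_d>\varphi_j(x')\}$ for a globally Lipschitz $\varphi_j$. Take a smooth partition of unity $\{\chi_j\}_{j=0}^N$ subordinate to this cover with $\sum_j\chi_j\equiv1$ near $\overline{\Omega}$. One extends $\chi_0 v$ by zero, extends each $\chi_j v$ ($j\ge1$) from $\Omega_j$ by the model operator below, transports back by the inverse rigid motion, and sums to obtain $\frak{E}v$. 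The rigid motions and the fixed multipliers $\chi_j$ cost only constants depending on $\Omega$ (its Lipschitz character and diameter) and on $s$; linearity is preserved.

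For the model case $\Omega=\{x_d>\varphi(x')\}$, the construction rests on two tools. First, a \emph{regularised distance}: a function $\Delta\in C^\infty(\mathbb{R}^d\setminus\overline{\Omega})$ with $c_1\,\mathrm{dist}(x,\partial\Omega)\le\Delta(x)\le c_2\,\mathrm{dist}(x,\partial\Omega)$ and $|\partial^\alpha\Delta(x)|\le C_\alpha\,\mathrm{dist}(x,\partial\Omega)^{1-|\alpha|}$, obtained from a Whitney decomposition of the exterior and an associated mollified partition of unity. Second, a function $\psi\in C([1,\infty))$ with rapid decay satisfying $\int_1^\infty\psi(\lambda)\ud\lambda=1$ and $\int_1^\infty\lambda^k\psi(\lambda)\ud\lambda=0$ for every $k\ge1$; its existence follows by prescribing these (divergent) moments and invoking a Borel/entire-function argument. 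One then sets, for $x\notin\overline{\Omega}$,
\[
(\frak{E}_0 v)(x):=\int_1^\infty v\big(x',\,x_d+\lambda\,\Delta^*(x)\big)\,\psi(\lambda)\ud\lambda,
\]
where $\Delta^*$ is a fixed multiple of $\Delta$ chosen so that the argument $(x',x_d+\lambda\Delta^*(x))$ lies in $\Omega$ for all $\lambda\ge1$, and $\frak{E}_0 v:=v$ on $\Omega$. That the glued function and its weak derivatives match up across $\partial\Omega$, so $\frak{E}_0 v|_\Omega=v$, is then a direct verification from the formula.

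The analytic heart --- and the step I expect to be the main obstacle --- is the uniform bound $\|\frak{E}_0 v\|_{H^s(\mathbb{R}^d\setminus\overline{\Omega})}\le C\|v\|_{H^s(\Omega)}$ for every $s\in\mathbb{N}_0$ (the interior part of the $\mathbb{R}^d$-norm being exactly $\|v\|_{H^s(\Omega)}$). Differentiating under the integral, $\partial^\alpha(\frak{E}_0 v)$ expands into a sum of terms in which $j\le|\alpha|$ derivatives land on $v$ and the remaining $|\alpha|-j$ act on the argument $\lambda\Delta^*(x)$, generating products of derivatives of $\Delta^*$ --- hence negative powers $\mathrm{dist}(x,\partial\Omega)^{-(|\alpha|-j)}$ --- together with a factor $\lambda^{|\alpha|-j}$. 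The terms with $j=|\alpha|$ are controlled directly by Minkowski's integral inequality and the substitution $x_d\mapsto x_d+\lambda\Delta^*(x)$, whose Jacobian is bounded above and below for each fixed $\lambda$. The dangerous terms $j<|\alpha|$, where one would naively lose powers of $\mathrm{dist}(x,\partial\Omega)^{-1}$, are tamed by repeated integration by parts in $\lambda$ combined with the vanishing-moment conditions on $\psi$, which trade the excess $\lambda$-powers for derivatives of $v$ of total order exactly $|\alpha|$ at interior points, at the cost of weights then absorbed by Hardy-type inequalities on $\Omega$. Squaring, integrating over the exterior, summing over $|\alpha|\le s$, and finally reassembling the partition-of-unity pieces yields the claim with $C_{\frak{E}}=C_{\frak{E}}(s,\Omega)$.
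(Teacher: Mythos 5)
The paper gives no proof of this statement at all: it is quoted verbatim as the classical Stein extension theorem with a citation to \cite{stein}, so the ``paper's approach'' is simply to invoke that reference. Your proposal is a faithful reconstruction of exactly that cited proof --- localisation to special Lipschitz domains, the regularised distance, the kernel $\psi$ with vanishing higher moments, and the integration-by-parts/moment argument controlling the derivatives that fall on $\Delta^*$ --- so it is correct and takes essentially the same route as the source the paper relies on.
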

		Subsequent refinements of the dependence of the constant $C_{\frak{E}}$ on the domain shape  in Theorem \ref{thm-extension}, have been presented for instance in \cite{sauter_warnke_1999, carstensen2004complicated}. 
		
		For the estimation of the best approximation error on the mesh skeleton $\Gamma_{\dint}\cup\partial\Omega$, we require the trace estimate on curved domains from Lemma \ref{lem:trace}. 
		
		We now have all the ingredients to assert the validity of the following $hp$-approximation error bounds.

		\begin{lemma}\label{lemma_approx_EASE}
			Let $K\in\mathcal{T}$ satisfy Assumptions \ref{assumption_mesh} and  \ref{assumption_overlap}, and let
			$\mathcal{K}\in\coveringmesh$ be the corresponding simplex 
			with $K\subset\mathcal{K}$ as per Definition~\ref{ch3:definition_mesh_covering}.
			Suppose that $v\in L_2(\Omega)$ is such that 
			$\frak{E}v|_{\mathcal{K}}\in H^{l_K}(\mathcal{K})$, for some $l_K\ge 0$, and that
			Assumption \ref{assumption_overlap} is satisfied. Then, there exists an operator $\pi_p: H^{l_K}(\mathcal{K}) \to \mathcal{P}_p(\mathcal{K})$, such that
			\begin{equation}\label{approxH_k}
				\norm{v - \pi_p v}{H^q(K)} 
				\le C_1 \frac{h_{K}^{s_{K}-q}}{p^{l_{K}-q}}\norm{\frak{E}v}{H^{l_{K}}(\mathcal{K})},\quad l_K\ge 0,
			\end{equation}
			for $0\le q\le l_K$, and 
			\begin{equation}\label{approx-inf_k}
				\norm{v - \pi_p v}{F_i}
				\le 	\mathcal{C}_{ap}^{1/2}(p,K,F_i) |F_i|^{1/2}\frac{h_{K}^{s_{K}-d/2}}{p^{l_{K}-1/2}} 
				\norm{\frak{E} v}{H^{l_K}(\mathcal{K})}, ~ l_{K}>d/2,~~~~~~~~
			\end{equation}
			with
			$$
			\mathcal{C}_{ap}(p,K,F_i) 
			:= C_2 \min \Big\{h_K^d
			\big(	|F_i|\sup_{\uu{x}^0_i\in K}
			\min_{\mathbf{x}\in F_i}(\m\cdot\mathbf{n})\big)^{-1},p^{d-1} \Big\},
			$$
			$s_{K}=\min\{p+1, l_{K}\}$, and $C_1,C_2>0$ constants
			depending only on the shape-regularity of ${\mathcal{K}}$, $q$, $l_K$, on $C_{\diam}$ (from Assumption \ref{assumption_overlap})  and on the domain $\Omega$. 
		\end{lemma}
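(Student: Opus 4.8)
The plan is to construct $\pi_p$ by composing the global Stein extension $\frak{E}$ of Theorem~\ref{thm-extension} with a classical $hp$-quasi-interpolant on the shape-regular covering simplex $\mathcal{K}\supset K$ supplied by Assumption~\ref{assumption_overlap}: for $v$ with $\frak{E}v|_{\mathcal{K}}\in H^{l_K}(\mathcal{K})$ we set $\pi_p v:=\tilde\pi_p(\frak{E}v)$, where $\tilde\pi_p:H^{l_K}(\mathcal{K})\to\mathcal{P}_p(\mathcal{K})$ is one of the standard simplicial $hp$-projectors of \cite{Babuska-Suri:RAIRO:1987,Babuska-Suri:SINUM:1987,schwab}, for which $\|\frak{E}v-\tilde\pi_p\frak{E}v\|_{H^q(\mathcal{K})}\le C\,h_{\mathcal{K}}^{s_K-q}p^{-(l_K-q)}\|\frak{E}v\|_{H^{l_K}(\mathcal{K})}$ for all $0\le q\le l_K$, with $C$ depending only on the shape-regularity of $\mathcal{K}$, on $q$, and on $l_K$. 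Since $v=\frak{E}v$ on $K$ and $K\subset\mathcal{K}$, \eqref{approxH_k} follows at once upon restricting the error from $\mathcal{K}$ to $K$ and replacing $h_{\mathcal{K}}$ by $C_{\diam}h_K$ via Assumption~\ref{assumption_overlap}.

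For \eqref{approx-inf_k} I would bound $w:=v-\pi_p v$ on $F_i$ in two independent ways and take the minimum, which reproduces the two branches of $\mathcal{C}_{ap}$. First branch: apply the curved trace inequality of Lemma~\ref{lem:trace} on the sub-element $K_{F_i}$ (admissible by Assumption~\ref{assumption_mesh}), and optimize the free parameter $\zeta$ by AM--GM to get, with $\mu_i:=\min_{\mathbf{x}\in F_i}(\m\cdot\mathbf{n})$ and $M_i:=\max_{\mathbf{x}\in F_i}|\m|_2$, the bound $\|w\|_{F_i}^2\le \tfrac{d}{\mu_i}\|w\|_{K_{F_i}}^2+\tfrac{2M_i}{\mu_i}\|w\|_{K_{F_i}}\|\nabla w\|_{K_{F_i}}$. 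Controlling $\|w\|_{K_{F_i}}\le\|w\|_{\mathcal{K}}$ and $\|\nabla w\|_{K_{F_i}}\le\|\nabla w\|_{\mathcal{K}}$ by the $q=0$ and $q=1$ cases above, and using $M_i\le h_K\le h_{\mathcal{K}}$, the two resulting terms scale like $h_K^{2s_K}\mu_i^{-1}p^{-2l_K}$ and $h_K^{2s_K}\mu_i^{-1}p^{-(2l_K-1)}$, the latter dominating. Since $\mathbf{x}_i^0$ may be chosen freely within $K$, we replace $\mu_i^{-1}$ by $(\sup_{\mathbf{x}_i^0\in K}\mu_i)^{-1}$, and factoring out $|F_i|h_K^{2s_K-d}p^{-(2l_K-1)}$ leaves exactly the first argument $C\,h_K^d/(|F_i|\sup_{\mathbf{x}_i^0\in K}\min_{\mathbf{x}\in F_i}(\m\cdot\mathbf{n}))$ of the $\min$ in $\mathcal{C}_{ap}$.

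Second branch: use $\|w\|_{F_i}^2\le |F_i|\,\|w\|_{L_\infty(F_i)}^2\le|F_i|\,\|w\|_{L_\infty(\mathcal{K})}^2$ together with a scale-invariant Gagliardo--Nirenberg inequality $\|w\|_{L_\infty(\mathcal{K})}\le C\|w\|_{L_2(\mathcal{K})}^{1-d/(2l_K)}|w|_{H^{l_K}(\mathcal{K})}^{d/(2l_K)}$, valid for $l_K>d/2$; inserting the $q=0$ estimate for $\|w\|_{L_2(\mathcal{K})}$ and the stability bound $|w|_{H^{l_K}(\mathcal{K})}\le C\|\frak{E}v\|_{H^{l_K}(\mathcal{K})}$ (the $l_K$-th seminorm of the degree-$p$ part either vanishes, when $p+1\le l_K$, or is controlled by $H^{l_K}$-stability of $\tilde\pi_p$) gives $\|w\|_{L_\infty(\mathcal{K})}\le C\,h_K^{s_K-d/2}p^{-(l_K-d/2)}\|\frak{E}v\|_{H^{l_K}(\mathcal{K})}$. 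Squaring and rearranging the powers of $p$ yields $p^{d-1}\cdot|F_i|h_K^{2s_K-d}p^{-(2l_K-1)}\|\frak{E}v\|^2_{H^{l_K}(\mathcal{K})}$, i.e.\ the second argument of the $\min$. Taking the minimum of the two bounds, absorbing the shape-regularity and $C_{\diam}^{s_K}\le C_{\diam}^{l_K}$ factors into $C_1,C_2$, and recalling $s_K=\min\{p+1,l_K\}$, completes the proof of \eqref{approx-inf_k}.

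The routine ingredients are the simplicial $hp$-estimates and the Sobolev/Gagliardo--Nirenberg interpolation; the one genuinely delicate point is that \emph{every} constant in play — from the extension $\frak{E}$, from $\tilde\pi_p$ on $\mathcal{K}$, and from Lemma~\ref{lem:trace} — must be independent of the (possibly wild) shape of $K$ itself. This is exactly what Assumptions~\ref{assumption_mesh} and~\ref{assumption_overlap} secure: $C_{\frak{E}}$ depends only on $\Omega$; the estimates on $\mathcal{K}$ depend only on its shape-regularity and on $C_{\diam}$; and the only geometric datum of $K$ surviving into \eqref{approx-inf_k} is the scalar $\sup_{\mathbf{x}_i^0\in K}\min_{\mathbf{x}\in F_i}(\m\cdot\mathbf{n})$, alongside $h_K$ and $|F_i|$. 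I expect the bookkeeping that confines the $K$-dependence to precisely that scalar — in particular the AM--GM balancing needed to recover the sharp exponent $p^{-(2l_K-1)}$, and the reduction $M_i\le h_K$ — to be the main thing requiring care; the remainder is assembly.
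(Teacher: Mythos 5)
Your proposal is correct and follows essentially the same route as the paper: the same operator $\pi_p=\Pi_p\circ\frak{E}$ with restriction from $\mathcal{K}$ to $K$ for \eqref{approxH_k}, and the same two-branch minimum for \eqref{approx-inf_k}, where your AM--GM optimization of $\zeta$ in Lemma~\ref{lem:trace} is equivalent to the paper's choice $\zeta=p$. The only (harmless) deviation is in the second branch, where the paper directly cites the classical $hp$ maximum-norm approximation estimate of Babu\v{s}ka--Suri while you re-derive it from the $L_2$ estimate, $H^{l_K}$-stability and Gagliardo--Nirenberg interpolation; both yield the $p^{d-1}$ branch of $\mathcal{C}_{ap}$.
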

		\begin{proof}
			Let $\Pi_p:H^{l}(\mathcal{K})\to\mathcal{P}_p(\mathcal{K})$ be a known optimal $hp$-version approximation operator on simplices, see, e.g., \cite{Babuska-Suri:RAIRO:1987,Babuska-Suri:SINUM:1987,schwab}. We define $\pi_p:H^{l}(\mathcal{K})\to\mathcal{P}_p(\mathcal{K})$ by $\pi_p v:= \Pi_p(\frak{E}v)$. 
			To prove \eqref{approxH_k}, we begin by observing that
			\[
			\norm{v - \pi_p v}{H^q(K)} = \norm{\frak{E}v - \Pi_{p}(\frak{E} v)}{H^q(K)}
			\le \norm{\frak{E}v - \Pi_{p}(\frak{E} v)}{H^q(\mathcal{K})}.
			\]
			Thus,
			Assumption \ref{assumption_overlap} and standard $hp$-approximation estimates on simplices (e.g. \cite{Babuska-Suri:RAIRO:1987,Babuska-Suri:SINUM:1987,schwab} yield 
			the desired bound; we refer to the proof of \cite[Lemma 3.7]{book} for a similar argument for polytopic elements.
			
			To prove \eqref{approx-inf_k}, we \Rev{use}
			the trace inequality \eqref{eq:trace} with $\zeta = p$ \Rev{to deduce} 
			\begin{equation}\label{face1}
				\norm{v - \pi_p v}{F_i}^2 
				\le \Rev{ C\frac{h_K}{\displaystyle\sup_{\uu{x}^0_i\in K} \min_{\uu{x}\in F_i}(\m\cdot\mathbf{n})}} \frac{h_{K}^{2s_{K}-1}}{p^{2l_{K}-1}}\norm{\frak{E}v}{H^{l_{K}}(\mathcal{K})}^2,
			\end{equation}
			noting that $ \max_{\uu{x}\in F_i}|\m|_2^2\le h_K^2$. 			
			On the other hand, we observe that
			$$
			\norm{v - \pi_p v}{F_i}^2 \le |F_i |\norm{v - \pi_p v}{L_{\infty}(K_{F_i})}^2\le  \norm{\frak{E}v - \Pi_{p}(\frak{E} v)}{L_{\infty}(\mathcal{K})}^2.
			$$ 
			Hence, employing a classical $hp$-approximation estimate for the maximum norm error from \cite{Babuska-Suri:RAIRO:1987,Babuska-Suri:SINUM:1987}, (cf. also \cite[Lemma 20]{book} 
			we arrive at
			\begin{equation}\label{face2}
				\norm{v - \pi_p v}{F_i}^2  \le C |F_i|
				\frac{h_{K}^{2s_{K}-d}}{p^{2l_{K}-d}}\norm{\frak{E}v}{H^{l_{K}}(\mathcal{K})}^2,
			\end{equation}
			for $l_{K}>d/2$.
			The result follows by taking  the minimum 
			between the bound in \eqref{face1} and the bound in  \eqref{face2}.
		\end{proof}
		
		\begin{remark}
			We note the correspondence between $\mathcal{C}_{ {\rm INV}}(p,K,F_i)$ from Lemma \ref{curved_inv_est_el} and  $\mathcal{C}_{ap}(p,K,F_i)$, in the typical case $h_K^d\sim |K|$. The key attribute of both expressions is that they remain bounded  for degenerating $|F_i|$, allowing for the estimates \eqref{eq:inv_gen_el} and  \eqref{approx-inf_k} to remain finite as $|F_i|\to 0$.
		\end{remark}
		
		\begin{remark}
			If the constant $\mathcal{C}_{ap}(p,K,F_i) $ in \eqref{approx-inf_k} is taken with the first term, then the approximation result \eqref{approx-inf_k} holds with  $l_K > 1/2$.
		\end{remark}
		
		\section{A priori error analysis}\label{sec:aprior}
		
		We are now ready to \Rev{briefly discuss} a priori error bounds for sufficiently smooth exact solutions, thereby generalizing the respective results presented in \cite{book} \Rev{to the case of curved} polytopic meshes. The line of argument is \Rev{similar to the case of straight} polytopic meshes presented in detail in \cite{book}.
		
		A crucial ingredient of the analysis for the proof of stability of the dG-EASE method is the precise definition of the discontinuity-penalization function $\sigma$ appearing in the method \eqref{adv_galerkin_dg}. It is important to define $\sigma$ sufficiently large for stability, while at the same time not substantially larger than what is required, as it could potentially cause loss of accuracy and/or conditioning issues.  {Additionally, following \cite{book}, we provide a stronger inf-sup stability result with respect to a `steamline-diffusion'-type augmented norm, when the wind ${\bf b}$ is non-zero. The size of the `steamline-diffusion' coefficient depends crucially on Lemma \ref{curved_inv_est_h1l2}, whose constant provides information on the stabilization capabilities of the method.}

		The dG norm for which we seek to prove a priori error bounds is given by
		$
		\ndg{v} :=  \big(\ndg{v}_{\rm ar}^2+\ndg{v}_{\rm d}^2\big)^{1/2},
		$	where 
		\[
		\ndg{v}_{\rm ar}^2 = \ltwo{c_0 v}{}^2 +\frac{1}{2}\sum_{K\in\mathcal{T}} \big(\| \sqrt{|\mathbf{b}\cdot\mathbf{n}|} \ujump{v}\|^2_{\partial_- K } 
		+  \|\sqrt{|\mathbf{b}\cdot\mathbf{n}|}  v\|^2_{{\partial_+ K}\cap \partial\Omega}  \big),
		\]
		with $c_0$ given in \eqref{assumption-cb}, and
		$
		\ndg{v}^2_{\rm d}=\ltwo{\sqrt{a} \nabla_{\mathcal{T}}^{} v}{} ^2 +\ltwo{\sqrt{\sigma} 
			\jump{v}}{\Gamma_{\dint}\cup\partial\Omega_{\rm D}}^2.
		$

		\begin{definition}\label{interfaces}
			For a mesh $\mathcal{T}$, we define the set $\mathcal{F}_{\dint}$ of \emph{interfaces} $F\subset \Gamma_{\dint}$ by
			\[
			\mathcal{F}_{\dint}:=\{ F\subset \Gamma_{\dint}: \text{ there exist } K,K'\in\mesh\text{ with } F=\partial K\cap\partial K'\};
			\]
			correspondingly, we set $\mathcal{F}_{\ddd}:=\{ F\subset \partial\Omega_{\ddd}: \text{ there exists } K\in\mesh\text{ with } F=\partial K\cap\partial \Omega_{\ddd}\}$. For notational compactness, we also define $\mathcal{F}_{\dint,\ddd}:=\mathcal{F}_{\dint}\cup\mathcal{F}_{\ddd}$.
			(Note that $F$ may comprise of one or more faces of $K,K'$.) Moreover, each interface $F$ may be contained in one or more $F_i$'s of the elements $K,K'$ as per Assumption \ref{assumption_mesh}. Thus, there exists a subset $\{F_i^K\}_{i\in I^K_F}$ with index set $I^K_F\subset \{1,\dots, n_K^{}\}$, such that $F\subset \cup_{i\in I_K^F} F_i^K$ and, correspondingly, a set $I^{K'}_F\subset \{1,\dots, n_{K'}^{}\}$ such that $F\subset \cup_{i\in I_{K'}^F} F_i^{K'}$.
		\end{definition}
		
		For technical reasons (cf. \cite{book} and the references therein), we shall make use of the following extensions $\tilde{B}_{\rm d}:(H^1(\Omega)+\fes)\times(H^1(\Omega)+\fes)\to\mathbb{R}$ and $\tilde{\ell}:(H^1(\Omega)+\fes)\to\mathbb{R}$ of the bilinear and linear forms \eqref{diffusion_bilinear} and \eqref{adv-dg-linear}, \Rev{which are given replacing $ \mean{a \nabla w }$ and  $\mean{a \nabla v }$ with $ \mean{a \Pi\nabla w }$ and  $\mean{a \Pi\nabla v }$  in $B_{\rm d}$ and $\ell$, respectively, where} $\Pi: [L_2(\Omega)]^d\to [\fes]^d$ denotes the orthogonal $L_2$-projection operator onto the (vectorial) finite element space. Observe that $\tilde{B}_{\rm d}(w,v)=B_{\rm d}(w,v)$ and $\tilde{\ell}(v)=\ell(v)$ when $w,v\in \fes$. Similarly, we define $\tilde{B}(w,v):=\tilde{B}_{\rm d}(w,v)+{B}_{\rm ar}(w,v) $. Next, we discuss the coercivity and continuity of $\tilde{B}_{\rm d}$.
		
		\begin{lemma}
			Let~\eqref{assumption_a} hold and consider a mesh $\mathcal{T}$ satisfying Assumption \ref{assumption_mesh}.  {With the notation introduced in Definition \ref{interfaces}, d}efine the discontinuity-penalization 
			function $\sigma:\Gamma_{\dint}\cup\partial\Omega_{\ddd} \rightarrow \mathbb{R}$  {for every} interface $F\in \mathcal{F}_{\dint,\ddd}$, $F=\partial K\cap \partial K'$,  {by}
			\begin{equation}\label{sigma}
				\sigma|_F:=
				2  
				\max_{\mathcal{K}\in \{K,K'\}}\Big\{  |I_F^\mathcal{K}|\max_{i\in I_F^\mathcal{K}}\big\{\Rev{\mathcal{C}_{\rm INV}
					({p}_\mathcal{K}^{},\mathcal{K},F_i^\mathcal{K})}|F_i^\mathcal{K}|\big\} \frac{ {\bar{a}_K}({p}_\mathcal{K}^{}+1)({p}_{\mathcal{K}}^{}+d)}{|\mathcal{K}|} \Big\};
			\end{equation}
			when $F\in\mathcal{F}_{\ddd}$ we set $K=K'$. Then,  {for all $w,v\in H^1(\Omega)+\fes$,} we have
			\begin{equation}\label{eq:coer_cont}
				\tilde{B}_{\rm d}(w,w)\ge\ \frac{1}{2}\ndg{w}_{\rm d}^2\quad\text{and}\quad
				\tilde{B}_{\rm d}(w,v)\le\ 2\ndg{w}_{\rm d}\ndg{v}_{\rm d}.
			\end{equation}
		\end{lemma}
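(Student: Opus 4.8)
The plan is to establish the two inequalities in \eqref{eq:coer_cont} by the standard interior penalty argument, with the key point being that the choice of $\sigma$ in \eqref{sigma} is precisely calibrated to absorb the consistency (average) terms using the inverse estimate of Lemma \ref{curved_inv_est_el}. First I would record the explicit form $\tilde{B}_{\rm d}(w,w)=\|\sqrt{a}\bgrad w\|_{}^2+\|\sqrt{\sigma}\jump{w}\|_{\Gamma_{\rm int}\cup\partial\Omega_{\rm D}}^2-2\int_{\Gamma_{\rm int}\cup\partial\Omega_{\rm D}}\mean{a\Pi\nabla w}\cdot\jump{w}\ud s$, and the symmetric bilinear version for the continuity bound. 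The heart of the proof is the estimate of the cross term $T:=\int_{\Gamma_{\rm int}\cup\partial\Omega_{\rm D}}\mean{a\Pi\nabla w}\cdot\jump{v}\ud s$. I would split the integral over interfaces $F\in\mathcal{F}_{\rm int,D}$, write $\mean{a\Pi\nabla w}=\tfrac12(a_K(\Pi\nabla w)|_K+a_{K'}(\Pi\nabla w)|_{K'})$, and on each side use $\|a_K(\Pi\nabla w)|_K\cdot\uu{n}\|_F\le \bar a_K^{1/2}\|\sqrt{a}\,\Pi\nabla w|_K\|_F$, followed by the fact that $F\subset\cup_{i\in I_F^K}F_i^K$, so $\|\cdot\|_F^2\le\sum_{i\in I_F^K}\|\cdot\|_{F_i^K}^2$, and then Lemma \ref{curved_inv_est_el} applied componentwise to the polynomial vector $\Pi\nabla w|_K\in[\mathcal{P}_{p_K}(K)]^d$: $\|\sqrt{a}\,\Pi\nabla w|_K\|_{F_i^K}^2\le \mathcal{C}_{\rm INV}(p_K,K,F_i^K)\frac{(p_K+1)(p_K+d)|F_i^K|}{|K|}\|\sqrt{a}\,\Pi\nabla w|_K\|_K^2$. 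Combining over $i\in I_F^K$ and using $|I_F^K|\max_i\{\cdots\}$ gives exactly one half of $\sigma|_F$ times $\|\sqrt{a}\,\Pi\nabla w|_K\|_K^2$; the factor $2$ and the $\max$ over $\{K,K'\}$ in \eqref{sigma} handle the two sides and the factor $\tfrac12$ from the average.

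Having done that, the local computation reads, for each interface $F$,
\begin{equation*}
\Big|\int_F\mean{a\Pi\nabla w}\cdot\jump{v}\ud s\Big|\le \frac12\sum_{\mathcal{K}\in\{K,K'\}}\big(\tfrac12\sigma|_F\big)^{1/2}\|\sqrt{a}\,\Pi\nabla w|_\mathcal{K}\|_{\mathcal{K}}\,\|\sqrt{\sigma}\jump{v}\|_F,
\end{equation*}
up to the bookkeeping with $|I_F^\mathcal{K}|$; summing over $F$, using Cauchy--Schwarz on the resulting sum and the $L_2$-stability of $\Pi$ (so $\|\sqrt{a}\,\Pi\nabla w\|_{}\le\|\sqrt{a}\bgrad w\|_{}$, valid since $a$ is element-wise constant so $\sqrt a\Pi=\Pi\sqrt a$ elementwise), one obtains $2|T|\le \tfrac12\|\sqrt a\bgrad w\|_{}^2+\tfrac12\|\sqrt\sigma\jump{v}\|_{}^2$ for the coercivity case ($v=w$, with Young's inequality), which yields $\tilde{B}_{\rm d}(w,w)\ge\frac12\|\sqrt a\bgrad w\|_{}^2+\frac12\|\sqrt\sigma\jump{w}\|_{}^2\ge\frac12\ndg{w}_{\rm d}^2$. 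For continuity, the same bound on both cross terms together with Cauchy--Schwarz on the $2$-component vector $(\|\sqrt a\bgrad\cdot\|,\|\sqrt\sigma\jump{\cdot}\|)$ gives $\tilde B_{\rm d}(w,v)\le \|\sqrt a\bgrad w\|\,\|\sqrt a\bgrad v\|+\|\sqrt\sigma\jump w\|\,\|\sqrt\sigma\jump v\|+|T_w|+|T_v|\le 2\ndg{w}_{\rm d}\ndg{v}_{\rm d}$, where $|T_w|$ denotes the average-of-$w$ term bounded by $\tfrac12\ndg{w}_{\rm d}\ndg{v}_{\rm d}$ and symmetrically for $|T_v|$.

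The main obstacle I anticipate is the careful accounting of the overlap structure from Definition \ref{interfaces}: a single interface $F$ may be split among several $F_i^K$ of the same element, and a single $F_i^K$ may contain several interfaces, so one must be sure that when summing $\|\sqrt a\,\Pi\nabla w|_K\|_{F_i^K}^2$ over $i\in I_F^K$ and then over interfaces $F$ adjacent to $K$, the total is controlled by a \emph{single} copy of $\|\sqrt a\,\Pi\nabla w|_K\|_K^2$ (not $n_K$ copies); this is exactly why \eqref{sigma} uses $|I_F^\mathcal{K}|\max_{i\in I_F^\mathcal{K}}\{\cdots\}$ rather than a sum, and why the $F_i$'s are taken mutually exclusive in Assumption \ref{assumption_mesh} — I would make this bookkeeping explicit. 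A secondary technical point is justifying $\|\sqrt{a}\,\Pi w\|_K\le\|\sqrt a\,w\|_K$ for vector fields, which follows from \eqref{assumption_a} since on each element $\sqrt a$ is a constant matrix commuting with the elementwise $L_2$-projection. Everything else is routine Cauchy--Schwarz, Young's inequality with parameter chosen to produce the constants $1/2$ and $2$, and the componentwise application of Lemma \ref{curved_inv_est_el}.
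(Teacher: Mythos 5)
Your proof follows the same route as the paper's: reduce coercivity and continuity to bounding the consistency term $\int_{\Gamma_{\rm int}\cup\partial\Omega_{\rm D}}\mean{a\Pi\nabla w}\cdot\jump{v}\,\mathrm{d}s$, estimate $\|\sqrt{a}\,\Pi\nabla w\|_{F\cap\partial K}$ via Lemma \ref{curved_inv_est_el} combined with the $L_2$-stability of $\Pi$ (using that $a$ is element-wise constant so $\sqrt{a}$ commutes with the projection), match the result to the definition \eqref{sigma} of $\sigma$, and conclude with Young's and Cauchy--Schwarz inequalities. The paper's own proof is terser---it records exactly the trace bound you derive, states that ``coercivity already follows by a Young's inequality,'' and omits continuity as standard---so your more explicit bookkeeping of the index sets $I_F^K$ is a faithful, if more detailed, rendering of the same argument.
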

		\begin{proof} The idea of proof is standard and makes use of the trace inverse estimate developed above. The novel attribute here is the choice of $\sigma$ which requires some care since the star-shapedness of each interface $F$ may correspond to different boundary segments $F_i$ in either side of the interface. To that end, for $w\in H^1(\Omega)+\fes$, we have
			$
			\tilde{B}_{\rm d}(w,w) \ge \ndg{w}_{\rm d}^2
			-  2\int_{\Gamma_\text{int}\cup \partial\Omega_\text{D}}  \mean{a \Pi  \nabla w }\cdot \jump{w}  \ud s.
			$

			Therefore, Lemma \ref{curved_inv_est_el} and the stability of the orthogonal $L_2$-projection give
			\[
			\norm{ \Pi \sqrt{a}\nabla w}{F\cap \partial K}^2
			\le  |I_F^K|\max_{i\in I_F^K}\big\{\Rev{\mathcal{C}_{\rm INV}({p}_K^{},K,F_i^K)}|F_i^K|\big\} \frac{({p}_K^{}+1)({p}_{K}^{}+d)}{|K|} \|\sqrt{a}  \nabla w\|_{K}^2,
			\]
			Coercivity already follows by  {a Young's inequality}. The proof of continuity is standard and, therefore, omitted for brevity.
		\end{proof}
		
		\begin{remark}\label{remark:sigma}
			The stability of the dG-EASE method is guaranteed under extremely general mesh assumptions thanks to the judicious choice of the penalization  {function} \eqref{sigma}.
			As discussed also in Remark \ref{remark:optimisation}, the latter ultimately depends on the choice of subdivisions  $\{F_i\}_{i=1}^{n_K^{}}$ of  $\partial K$ 
			appearing in Assumption \ref{assumption_mesh}.  
			Of course, whenever possible, by simply following the recipe in Remark \ref{remark:comments}({\rm ii}), we can easily arrive at a practical value of the penalization  {function} for general curved elements.
		\end{remark}

		We shall additionally assume for simplicity  {of the presentation} that \begin{equation}\label{bdotxi}
			\mathbf{b}\cdot\nabla \xi \in \fes,\qquad\text{ for all }\ \xi\in\fes,
		\end{equation} as is a standard in this context, cf.~\cite{newpaper} and also \cite[Chapter 5]{book}. Assumption \eqref{bdotxi} can be further relaxed at the expense of an additional mild suboptimality with respect to the polynomial degree $\uu{p}$; see  {\cite[Remark 3.13]{newpaper}} and Remark \ref{rem:gen_b} below. 

		\begin{theorem} \label{infsup_theorem}
			Let $\mesh=\{K\}$ a subdivision of $\Omega\subset\mathbb{R}^d$, consisting of, possibly curved, elements satisfying Assumptions~\ref{assumption_mesh}, {~\ref{ass:c1}}  and \ref{assumption_overlap}.  Then, assuming that  {\eqref{bdotxi} holds} and that the discontinuity-penalization function $\sigma$ is given by
			\eqref{sigma},
			there exists a constant $\Lambda_s>0$, independent of $h$ and of $\vecp$, such that:
			\begin{equation}\label{Inf-sup1}
				\inf_{{w\in \fes\backslash \{0\}}} \sup_{{v\in \fes  
						\backslash \{0\}}} \frac{ {B}(w,v)}{\nsdg{w} \nsdg{v}}\geq \Lambda_s,
			\end{equation}
			with
			$
			\nsdg{v}:=\Big(\ndg{v}^2+\sum_{K\in\mesh} \lambda_K \ltwo{ \bold{b} \cdot \nabla{v}}{K}^2\Big)^{1/2},
			$
			whereby
			\begin{align}\label{SP}
				\lambda_K:=& \frac{\min \bigg\{ \displaystyle\frac{\rho_K }{ {\sqrt{\mathcal{C}^B_{\rm INV} (p_K,K)}}}
					,
					\Big( \sum_{F\subset\partial K}  
					\sum_{i\in I_F^K}\Rev{\mathcal{C}_{\rm INV}({p}_K^{},K,F_i^K)} \frac{|F_i^K|}{|K|}\Big)^{-1}  \bigg\}}{\max\{    \linf{\bold{b}}{K}  ,   
					\sigma_K   \} (p_K+1)(p_K+d)}
				\nonumber,
			\end{align}
			for $K \in \mesh$,  $p_K \geq 1$,  $\sigma_K := \max \{\sigma_{K}^a,\sigma_{K}^b\}$, with
			$
			\sigma_{K}^a  := \max_{F \subset \partial K} \sigma|_F,
			$		
			and
			\begin{eqnarray}\label{sigma_K2}
				\sigma_{K}^b  &:=& 2\max_{F \subset \partial K} \Big\{ {\displaystyle  \max_{\mathcal{K}\in \{K,K'\}}
					\Big\{ 
					{ {\sqrt{\mathcal{C}^B_{\rm INV} ({p}_\mathcal{K}^{},\mathcal{K})}}} \frac{\bar{a}_\mathcal{K}({p}_\mathcal{K}^{}+1)({p}_{\mathcal{K}}^{}+d)}{\rho_\mathcal{K}} } \Big\} \Big\}.
			\end{eqnarray}	
			
		\end{theorem}
		\begin{proof} \Rev{The proof follows in a completely analogous fashion to the proof of \cite[Theorem 5.2]{book} and is, therefore, largely omitted for brevity: the key idea is to set $v\equiv v(w):=w+\alpha w_s$, for $w\in\fes$ with $w_s|_K:=\lambda_K \mathbf{b}\cdot\nabla w$, $K\in \mesh$, with $\mathbb{R}\ni \alpha>0$. Then, it is sufficient to prove that 
				$
				\nsdg{v} \leq C^* \nsdg{w},
				$
				and
				$
				B(w,v)\geq C_*\nsdg{w}^2,
				$
				and then to set $\Lambda_s=C_*/{C^*}$, for some $C_*,C^*>0$ constants independent of the discretization parameters. The last two conditions are proven by using the inverse estimates above along with a judicious use of $\alpha$.}		
		\end{proof}
		
		\begin{remark}
			Theorem \ref{infsup_theorem} extends respective results on polytopic meshes from
			\cite{cangiani2015hp,book},
			to  general meshes consisting of polytopic and/or curved  elements with arbitrary number of faces.  {Moreover, this choice} removes a dependence of the inf-sup constant $\Lambda_s$ on the inverse inequality constants ${C}_{\rm INV}$ and $ {\mathcal{C}^B_{\rm INV}}$; cf. \cite{cangiani2015hp, book}.
		\end{remark}
		\begin{theorem} \label{thm:apriori_all}
			Let $\mesh=\{K\}$ be a subdivision  of $\Omega\subset\mathbb{R}^d$, consisting of general curved elements satisfying Assumptions~\ref{assumption_mesh},~ {\ref{ass:c1}}  and \ref{assumption_overlap}. Let also $\coveringmesh=\{\mathcal{K}\}$ an associated covering of $\mesh$ consisting of shape-regular simplices as per Definition~\ref{ch3:definition_mesh_covering}.  {Assume that \eqref{bdotxi} holds.}
			Assume that $u\in H^1(\Omega)$ the exact solution to \eqref{pde},\eqref{pde_bcs},  {is} such that $u|_K \in H^{l_{K}}(K)$, $l_K>1+d/2$, for each $K \in \mesh$.
			Let $u_h\in \fes$, with $p_K \geq1$, $K\in\mesh$, be the solution of \eqref{adv_galerkin_dg}, with $\sigma$ as in \eqref{sigma}.
			Then, we have
			\begin{equation*}
				\nsdg{u-u_{h}}^2 \le  C \hspace{-0.1cm} \sum_{K\in\mesh} \frac{h_{{K}}^{2s_{K}}}{p_{{K}}^{2l_{K}}} 
				\left( \mathcal{D}_{K}(F,\mathcal{C}_{ap},\lambda_K,p_{K})
				+  \mathcal{G}_{K}(F,{\mathcal{C}}_{\rm INV},	\mathcal{C}_{ap},p_{K})
				\right)
				\|{\frak E} u\|_{H^{l_{{K}}}(\mathcal{K})}^2,
			\end{equation*}
			with $s_{{K}}=\min\{p_{{K}}+1,l_{K}\}$, 
			\begin{align*}
				\mathcal{D}_K (F,	\mathcal{C}_{ap},\lambda_K,p_K) 
				&=  
				\norm{c_0}{L_\infty(K)}^2+\zeta_K^2+\lambda_K^{-1}+\lambda_K\beta_K^2 {p_K^{2}}{h_K^{-2}}  +\bar{a}_K {p_K^{2}}{h_K^{-2}}
				\\ 
				&+
				\beta_K h_{K}^{-d} p_{{K}}^{} \!\! 
				\sum_{F\subset\partial{K}}
				|I_F^K|\max_{i\in I_F^K}\big\{\mathcal{C}_{ap}({p}_K^{},K,F_i^K)|F_i^K|\big\}	, 
			\end{align*}	
			and
			\begin{align*}
				\mathcal{G}_K (F,{\mathcal{C}}_{\rm INV},	\mathcal{C}_{ap},p_K) 
				& =
				\bar{a}_K^2 p_K^3 h_K^{-d-2} 
				\kern -0.6cm \sum_{F\subset\partial{K}\cap (\Gamma_{\dint}\cup\partial\Omega_{\ddd})}\kern-.8cm
				\sigma^{-1} |I_F^K|\max_{i\in I_F^K}\big\{\mathcal{C}_{ap}({p}_K^{},K,F_i^K)|F_i^K|\big\}	
				\\
				&\hspace{-1cm} 
				+ \bar{a}_{K}^2 p_{{K}}^{4}h_K^{-2} |{K}|^{-1} 
				\kern -0.6cm \sum_{F\subset\partial{K}\cap (\Gamma_{\dint}\cup\partial\Omega_{\ddd})}\kern-.8cm
				\sigma^{-1}
				|I_F^K|\max_{i\in I_F^K}\big\{\Rev{\mathcal{C}_{\rm INV}({p}_K^{},K,F_i^K)}|F_i^K|\big\}	
				\\ 
				&\hspace{-0cm} +
				h_{K}^{-d} p_{{K}}^{} \!\! \kern-.5cm
				\sum_{F\subset\partial{K}\cap (\Gamma_{\dint}\cup\partial\Omega_{\ddd})}\kern-.5cm
				\sigma |I_F^K|\max_{i\in I_F^K}\big\{\mathcal{C}_{ap}({p}_K^{},K,F_i^K)|F_i^K|\big\}	, 
			\end{align*}	
			$s_{K}=\min\{p_{K}+1,l_{K}\}$,
			$\zeta_K := \norm{c/c_0}{L_\infty(K)}$, $c_0$ is in \eqref{assumption-cb}, 
			$\beta_K :=\norm{\bold{b}}{L_\infty(K)} $, and $C$ is a positive constant, which depends on the shape-regularity
			of $\coveringmesh$, but is independent of the 
			discretization parameters.
			
			{In the special case in which the coefficient $a$ is strictly positive definite a.e. in $\Omega$ while  $\mathbf{b}=\mathbf{0}$, Assumption \ref{ass:c1} can be removed from the hypotheses.}
		\end{theorem}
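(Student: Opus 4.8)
The plan is to run the standard Strang-type / inf-sup argument of \cite{cangiani2015hp,book}, pointing out only the places where the curved-element inverse estimates of Section~\ref{sec:approx} enter. Let $\pi_p$ be the approximation operator of Lemma~\ref{lemma_approx_EASE} and split the error as $u-u_h=\eta+\xi$ with $\eta:=u-\pi_p u$ and $\xi:=\pi_p u-u_h\in\fes$. By the triangle inequality it suffices to bound $\nsdg{\eta}$ and $\nsdg{\xi}$ separately and then to recognize the resulting bounds as the claimed right-hand side.

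The bound on $\nsdg{\eta}$ is purely approximation-theoretic. Since $\nsdg{\eta}^2=\ndg{\eta}_{\rm ar}^2+\ndg{\eta}_{\rm d}^2+\sum_{K\in\mesh}\lambda_K\ltwo{\mathbf{b}\cdot\nabla\eta}{K}^2$, I estimate each constituent seminorm by Lemma~\ref{lemma_approx_EASE}: the volume terms $\ltwo{c_0\eta}{K}^2$, $\ltwo{\sqrt a\bgrad\eta}{K}^2$ and $\lambda_K\ltwo{\mathbf{b}\cdot\nabla\eta}{K}^2$ via \eqref{approxH_k} (with $q=0$ and $q=1$, legitimate since $l_K>1+d/2$), and the interface/jump terms $\ltwo{\sqrt{|\mathbf{b}\cdot\mathbf{n}|}\,\ujump{\eta}}{\partial_-K}^2$ and $\ltwo{\sqrt\sigma\jump{\eta}}{\Gamma_{\dint}\cup\partial\Omega_{\rm D}}^2$ via \eqref{approx-inf_k}. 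Inserting the coefficients $\norm{c_0}{L_\infty(K)}$, $\zeta_K$, $\beta_K$, $\bar{a}_K$, $\lambda_K$, $\sigma$ and the $\mathcal{C}_{ap}$-dependent face weights reproduces precisely the terms grouped in $\mathcal{D}_K$ together with the $\sigma$-weighted contribution in $\mathcal{G}_K$.

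For $\nsdg{\xi}$ I invoke the inf-sup stability of Theorem~\ref{infsup_theorem}: there is $v_h\in\fes$ with $\nsdg{v_h}=1$ and $\Lambda_s\nsdg{\xi}\le B(\xi,v_h)$. Writing $\tilde{B}=\tilde{B}_{\rm d}+B_{\rm ar}$ and using consistency of the extended form, $\tilde{B}(u,v_h)=\ell(v_h)$ for all $v_h\in\fes$ (as in \cite{book}), together with $\tilde{B}(w,v_h)=B(w,v_h)$ for $w\in\fes$, gives $\tilde{B}(u-u_h,v_h)=0$ and hence $B(\xi,v_h)=-\tilde{B}(\eta,v_h)$. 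It remains to bound $|\tilde{B}(\eta,v_h)|$ by the square root of the claimed sum. I split $\tilde{B}(\eta,v_h)=B_{\rm ar}(\eta,v_h)+\tilde{B}_{\rm d}(\eta,v_h)$. In $B_{\rm ar}(\eta,v_h)$ I integrate the volume term by parts into skew-symmetric form; the parts pairing $\eta$ against $c_0 v_h$, against $\mathbf{b}\cdot\nabla v_h$, and against the inflow/outflow traces of $v_h$ are bounded by Cauchy-Schwarz, using the $hp$-estimates for $\eta$, the trace-inverse estimate \eqref{eq:inv_gen_el} to push the inflow/outflow traces of $v_h$ into $\ltwo{v_h}{K}\le\gamma_0^{-1}\ltwo{c_0 v_h}{K}$, and $\lambda_K^{1/2}\ltwo{\mathbf{b}\cdot\nabla v_h}{K}\le\nsdg{v_h}$; this produces the $\lambda_K^{-1}$, $\zeta_K^2$ and $\beta_K h_K^{-d}p_K\sum\dots$ terms in $\mathcal{D}_K$. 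In $\tilde{B}_{\rm d}(\eta,v_h)$, the volume term is bounded by $\ltwo{\sqrt a\bgrad\eta}{}\,\ndg{v_h}_{\rm d}$; the penalty term and the two consistency terms $\mean{a\Pi\bgrad\eta}\cdot\jump{v_h}$ and $\mean{a\Pi\bgrad v_h}\cdot\jump{\eta}$ are handled by Cauchy-Schwarz in the $\sqrt\sigma$-weighted face norm, applying \eqref{eq:inv_gen_el} to the polynomial $\sqrt a\,\Pi\bgrad v_h$ (degree $p_K-1$), the $L_2$-stability of $\Pi$ giving $\ltwo{\Pi\bgrad\eta}{K}\le\ltwo{\bgrad\eta}{K}$, and the form of the penalization \eqref{sigma} to absorb the associated constants; this yields the three summands of $\mathcal{G}_K$. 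Combining, $\nsdg{\xi}\le\Lambda_s^{-1}$ times the claimed square root, and the theorem follows.

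The main obstacle is exactly this last bookkeeping step for $\tilde{B}_{\rm d}(\eta,v_h)$: since $\eta\notin\fes$ the continuity bound \eqref{eq:coer_cont} is not available, so the consistency terms must be estimated by hand while carefully tracking the interplay between the geometry-dependent constants $\mathcal{C}_{\rm INV}$, $\mathcal{C}_{ap}$, the penalization $\sigma$, and the index sets $\{F_i^K\}$, $I_F^K$ of Definition~\ref{interfaces}, so that the $|F_i^K|$-factors cancel and every term stays finite as $|F_i^K|\to0$. For the final special case $a$ strictly positive definite and $\mathbf{b}=\mathbf{0}$: then $B_{\rm ar}$ reduces to $\int_\Omega c\,u\,v\ud\uu{x}$, the inflow/outflow boundary contributions and the streamline-diffusion term vanish, so $\lambda_K$ and $\sigma_K^b$ --- the only quantities whose definition invokes the $H^1$--$L_2$ inverse estimate of Lemma~\ref{curved_inv_est_h1l2}, hence Assumption~\ref{ass:c1} --- no longer occur; moreover $\ltwo{\bgrad v_h}{K}$ is then directly controlled by $\ltwo{\sqrt a\bgrad v_h}{K}$, so the consistency terms need only the trace estimates of Lemma~\ref{curved_inv_est_el} and Lemma~\ref{lem:trace}, which require Assumption~\ref{assumption_mesh} alone, and the inf-sup argument collapses to coercivity of $\tilde{B}_{\rm d}(\cdot,\cdot)+\int_\Omega c\,(\cdot)\,(\cdot)\ud\uu{x}$.
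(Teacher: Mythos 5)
Your proposal is correct and follows essentially the same route as the paper, which itself only sketches the argument by deferring to the polytopic-mesh proof in \cite{book}: an $\eta+\xi$ splitting, approximation bounds from Lemma \ref{lemma_approx_EASE} for $\eta$, and the inf-sup result of Theorem \ref{infsup_theorem} combined with Galerkin orthogonality of the extended forms and a hand-estimation of the inconsistent terms for $\xi$. Your identification of which terms land in $\mathcal{D}_K$ versus $\mathcal{G}_K$, and your explanation of why Assumption \ref{ass:c1} is dispensable in the pure-diffusion case (only $\lambda_K$ and $\sigma_K^b$ invoke Lemma \ref{curved_inv_est_h1l2}), match the paper's intent.
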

		\begin{proof}
			The proof follows on very similar lines to the respective one for polytopic meshes and can be found in \cite[Section 5.2]{book}.
		\end{proof}
		
		The above $hp$--version {\em a priori} error bounds hold
		without any assumptions on the relative size of the 
		faces $F$, $F\subset \partial K$, of a given curved element $K \in \mesh$. 
		To aid the understanding of the rates of convergence resulting from the above results, we set $p_K =p \geq1$,  
		$h=\max_{K\in\mesh} h_K$, $s_K=s$, $s=\min\{p+1,l\}$, and $l>1+d/2$,
		and assume that $\mbox{diam}(F) ~\sim h_K$, 
		for all faces $F\subset \partial K$, $K\in \mesh$, so that $|F|\sim h_K^{(d-1)}$. Then, Theorem \ref{thm:apriori_all} 
		reduces to
		\begin{equation*}
			\ndg{u-u_{h}}_{\rm d}  \leq C \frac{h^{s-1}}{p^{l-\frac{3}{2}}} \| u\|_{H^{l}(\Omega)},
		\end{equation*}
		i.e., it proves optimal convergence in $h$ and  suboptimal in $p$ by $p^{1/2}$.
		
		At the other end of the spectrum, consider the case of transport equation, i.e., when $a\equiv {\bf 0}$. In this case, the dG norm $\ndg{\cdot}$ degenerates to $\ndg{\cdot}_{\rm ar}$; note that, then we have  $\lambda_K=\mathcal{O}(h_K/p_K^2)$, and the {\em a priori}  error bound in Theorem \ref{thm:apriori_all}  reduces to
		\begin{equation*}
			\ndg{u-u_{h}}_{\rm ar} \le \frac{h^{s-\frac{1}{2}}}{p^{l-1}} \| u\|_{H^{l}(\Omega)}.
		\end{equation*}
		This bound is, again, optimal in $h$ and  suboptimal in $p$ by $p^{1/2}$ and completely generalizes the error estimate derived in our previous work \cite{cangiani2015hp} to essentially arbitrarily-shaped meshes under the same assumption  {\eqref{bdotxi}}.
		
		{
			\begin{remark}
				We remark on typical cases which result to simplified formulas for $\lambda_K$. Assuming $|K| ~\sim h_K^d$, $\rho_K\sim h_K$, and $|F|\sim h_K^{(d-1)}$ for an element $K\in\mathcal{T}$ and for  its immediate neighbours, both constants $ {\mathcal{C}^B_{\rm INV}}$ and   ${\mathcal{C}}_{\rm INV}$ will be defined by the first term in the maxima in \eqref{inverse_constant_curve} and \eqref{inverse_constant_H1}, respectively. Then, we deduce $\lambda_K\sim h_K/p_K^2$ for the important case of advection-dominated problems. 
			\end{remark}
		}
		\begin{remark}\label{rem:gen_b}
			For general  advection fields $\bold{b}$, the proof of the inf-sup condition needs to be modified by using a slightly different norm involving $\Pi(\bold{b} \cdot \nabla_{\mathcal{T}})$ instead of $(\bold{b} \cdot \nabla_{\mathcal{T}})$ in the $s$-norm, yielding an error  bound  which is optimal in $h$ but  suboptimal in $p$ by $p^{3/2}$ for the purely hyperbolic problem. Of course, if we modify the method by including a streamline-diffusion stabilization term as done in  \cite{hss}, then an $hp$-optimal bound can be derived without  {enforcing \eqref{bdotxi}}. 
		\end{remark}

		\section{Numerical examples} \label{numerical example}
		We test  {the dG-EASE approach 
			through a series of numerical experiments using curved elements, ranging from basic domain approximation to highly complex element shapes arising from random element agglomeration} 
		of a fine background triangulation.
		
		{In the case of the agglomeration-constructed elements,} the background  {(curved)} triangulation is also used for the assembly step. In particular, the discontinuity-penalisation function $\sigma|_F$ is fixed following the recipe in \eqref{sigma} with the subdivisions $\{F_i\}_{i=1}^{n_{K}^{}}$ of  $\partial K$, $K\in\mathcal{T}$, appearing in Assumption \ref{assumption_mesh}, given by unions of faces of the background triangulation. Moreover,  {for simplicity}
		the background triangulation is also used for integration, exploiting parallellization of the quadrature process~\cite{Kappas_2020}, see also \cite{book} for a more detailed discussion of implementation of such methods.  {Nonetheless, very often it is possible to use substantially coarser subdivisions than the background triangulation the elements have been constructed from, e.g., a subdivision with one simplex per straight face.}  
		
		{For curved elements, the current implementation performs quadrature by a sufficiently fine sub-triangulation approximating the curved element, exactly as in the agglomerated-element case. We stress, however, that in this case the sub-triangulation is only used to generate the quadrature rules.} These calculations are fully parallelizable {:} in~\cite{Kappas_2020} it is shown that quadrature cost becomes irrelevant if modern GPU architectures are used in the implementation of assembly.  {Of course, this is \emph{not} the only possibility. For instance, domain-exact quadrature algorithms for many curved domains exist, see, e.g., \cite{artioli2020algebraic} and the references therein for such algorithms.}

		\subsection{ {Example 1: curved elements}}  {We begin by testing the method on triangular elements with (non-parametric) curved faces.  Specifically, we consider a two-dimensional diffusion problem with $a= I_{2\times 2}$, $ I_{2\times 2}$ denoting the $2\times 2$-identity matrix,  $\mathbf{b} = (0,0)^\top$, $c=0$ and $f$ so that $u(x_1,x_2)= \sin(\pi x_1) \sin(\pi x_2)$ in \eqref{pde}. We solve this problem on an irregular annular domain constructed as the unit disc centred at origin, with a circular hole centred at $(0.25,0.25)$  and radius $0.4$ removed; cf. Figure \ref{fig:ex3_three_p} for an illustration.
		}

		\begin{figure}
			\includegraphics[height=5.2cm,width=5.8cm]{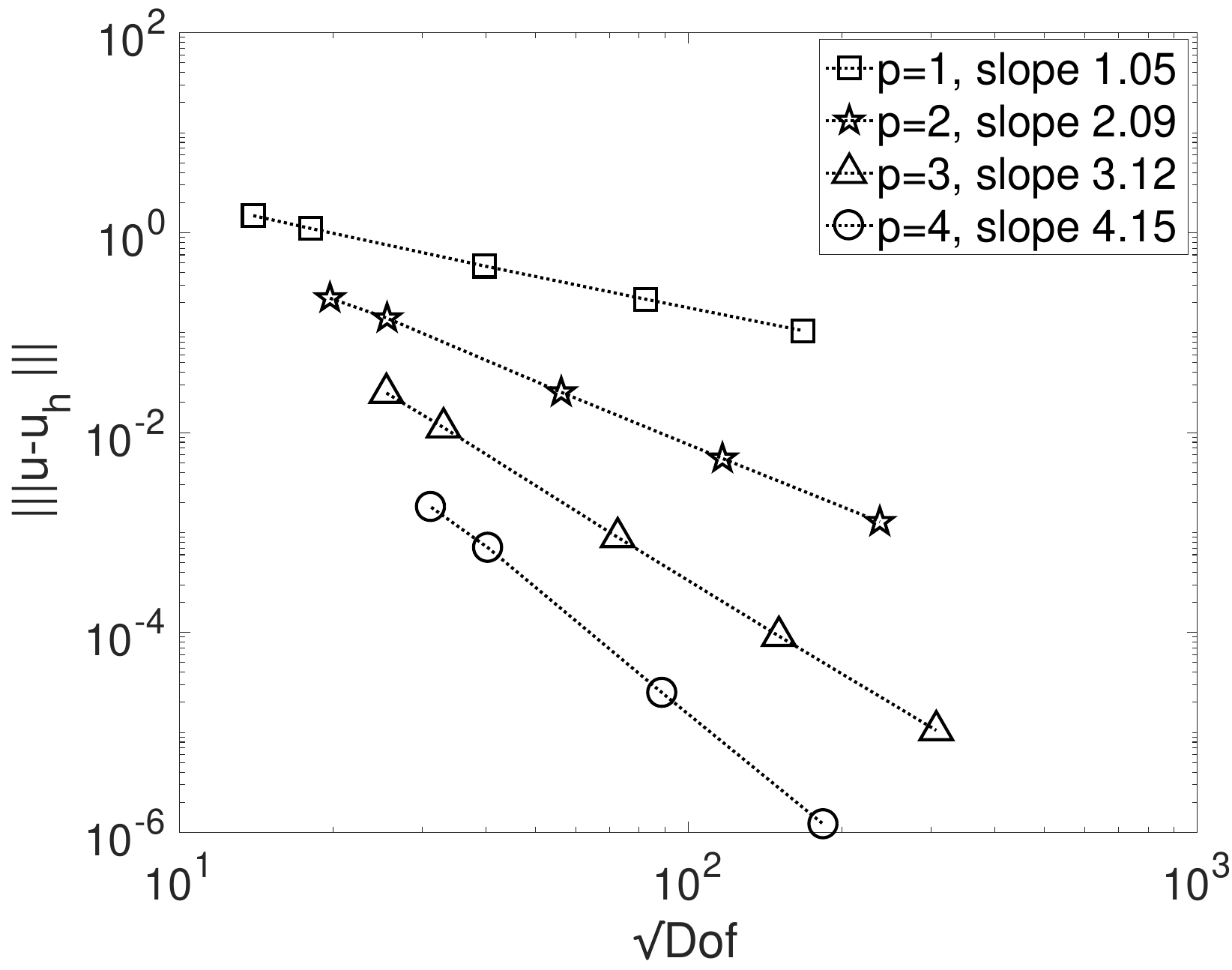}
			\includegraphics[height=5.2cm,width=5.8cm]{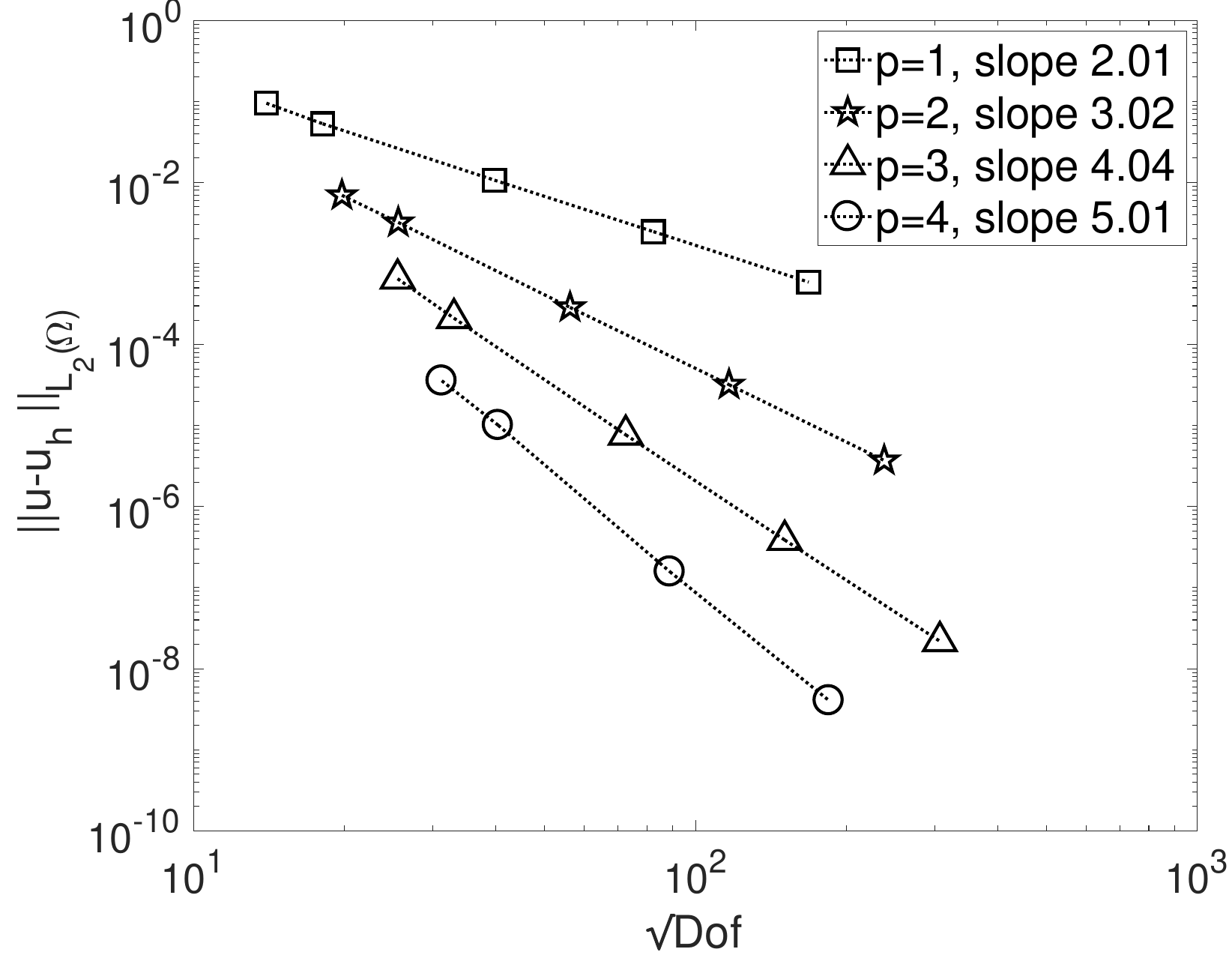}\\
			\includegraphics[height=5.2cm,width=5.8cm]{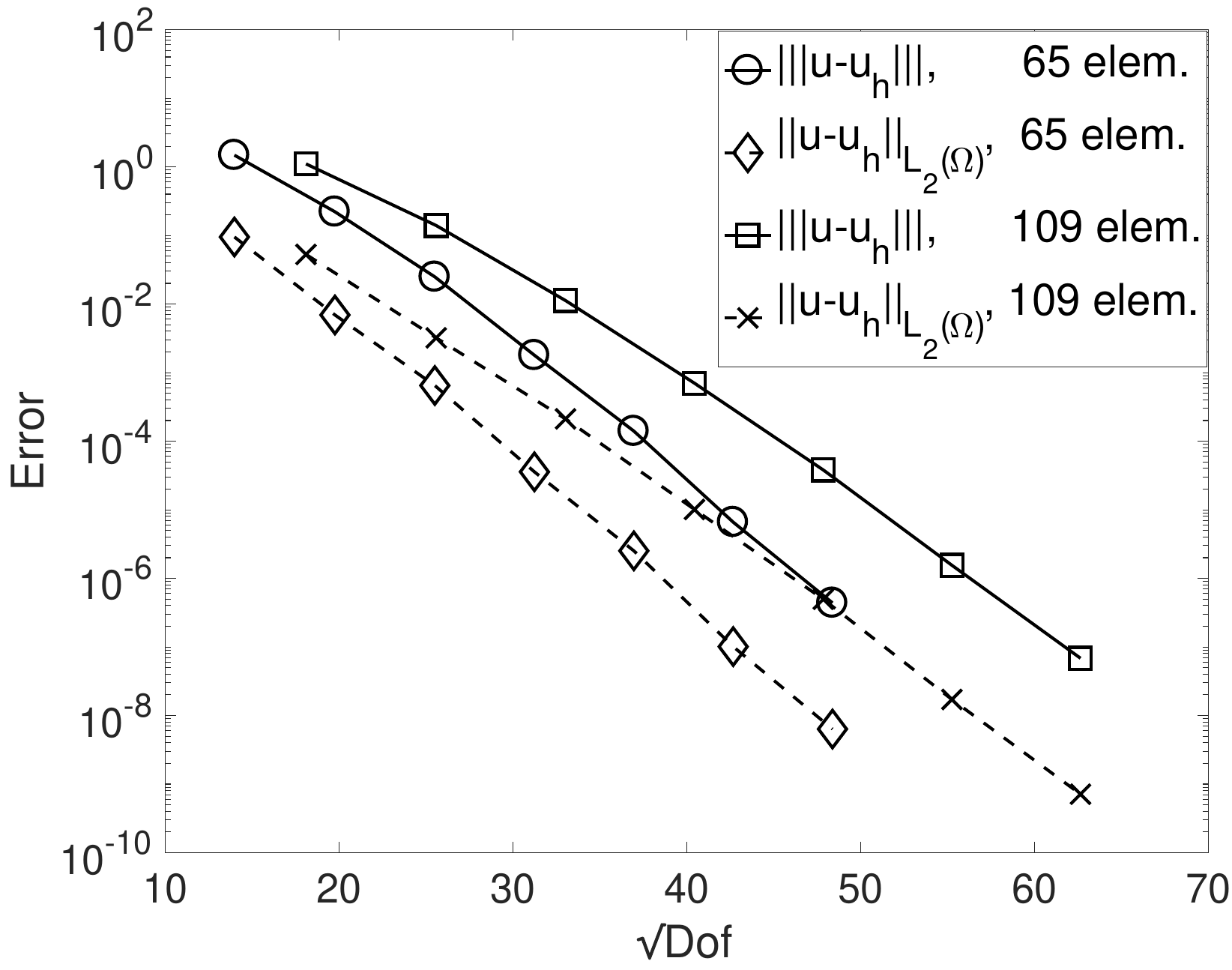}\hspace{.6cm}
			\includegraphics[height=5cm,width=5.2cm]{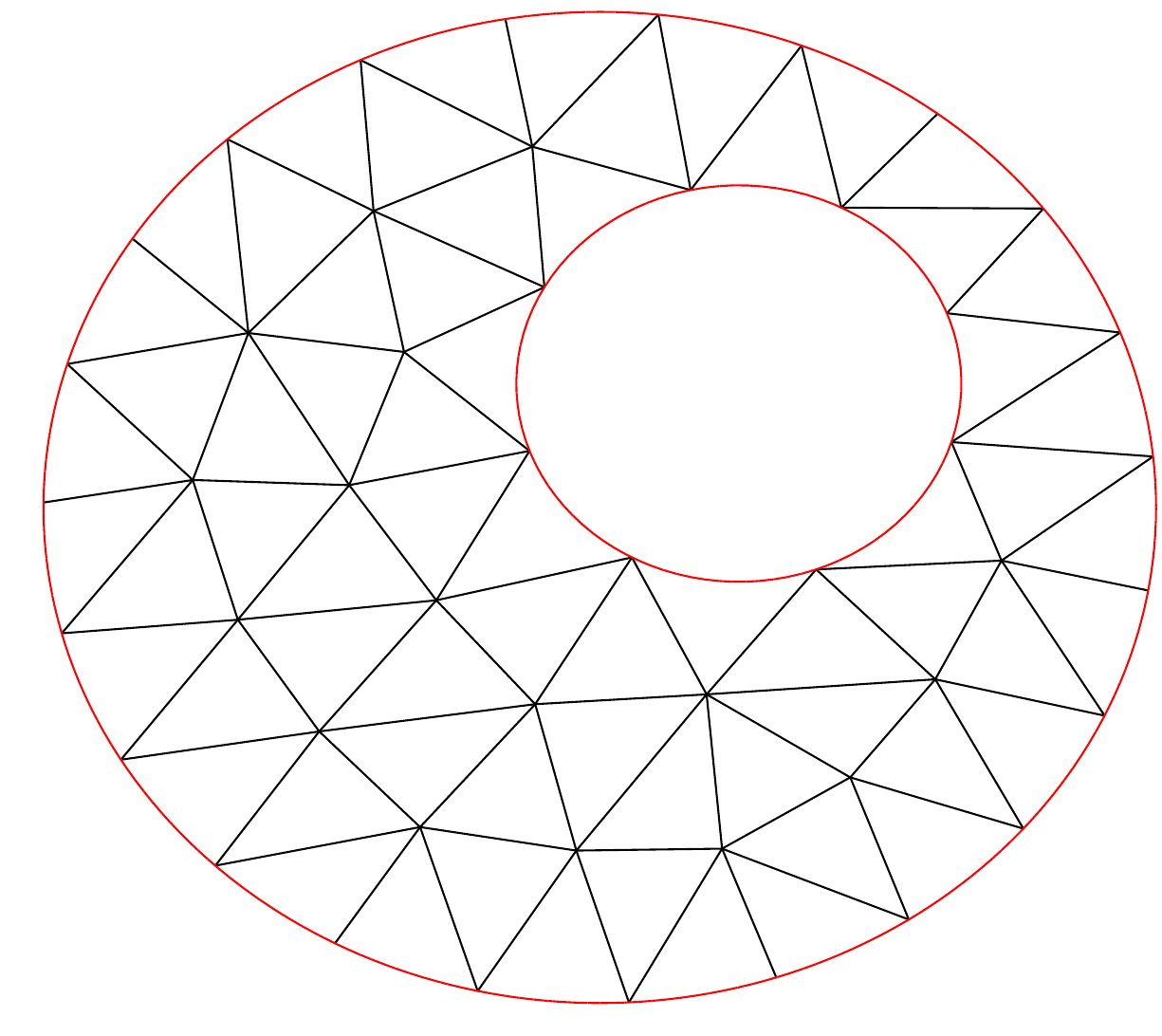}
			\vspace{-2mm}
			\caption{ {Example 1. Top: convergence history for $p=1,2,3,4$ in the $\ndg{\cdot}$ and $\norm{\cdot}{\Omega}$ norms against $\sqrt{{\rm Dof}}$ for the curved triangular mesh exemplified in the bottom (right) plot with $65, 109, 527, 2266$, and $9411$ elements, respectively. Bottom (left):  Convergence history for $p=1,2,\dots,7$ in the $\ndg{\cdot}$ and $\norm{\cdot}{\Omega}$ against $\sqrt{{\rm Dof}}$ for the meshes with $65$ and $109$ elements.
			}}\label{fig:ex3_three_p}
		\end{figure}
		{
			We construct a sequence of domain-fitted curvilinear meshes as follows.
			First, using the mesh generator from~\cite{persson2004simple}, we construct a sequence of meshes approximating the domain $\Omega$ comprising of $65, 109, 527, 2266$, and $9411$ quasi-uniform triangular elements, respectively. The $65$-element mesh is shown in Figure \ref{fig:ex3_three_p}. 
			Then, exploiting the knowledge of the level-set function of $\partial \Omega$, elements containing straight faces approximating the curved boundary are marked. Finally, all marked elements are treated as curved triangular elements with two straight faces and one curved face described by the domain level-set function, thus \emph{capturing the domain exactly.} }
		
		{In Figure \ref{fig:ex3_three_p} (top row), we present the convergence history of $\ndg{u-u_h}$ and $\norm{u-u_h}{\Omega}$ against $\sqrt{{\rm Dof}}$, with ${\rm Dof}$ the number of degrees of freedom on the aforementioned curvilinear meshes with $65, 109, 527, 2266$, and $9411$ elements, for $p=1,2,3,4$, respectively. We clearly observe that, for each fixed $p$,  all errors converge to zero at the optimal rates $\mathcal{O}(h^{p})$ and $\mathcal{O}(h^{p+1})$, respectively, as the mesh size $h$ tends to zero. In the two bottom plots in Figure \ref{fig:ex3_three_p}, we also investigate the convergence history of the dG-EASE solution under $p$-refinement, using the two meshes with $65$ and $109$ curved  elements, respectively, in linear-log scale. Here, we observe exponential convergence of all errors against $\sqrt{{\rm Dof}}$.}


		\subsection{Example 2: convergence study}\label{ex2}  {We now investigate the convergence of dG-EASE on a highly complex mesh comprising of elements arising from agglomeration of a very fine background mesh, which also contains curved boundary-touching elements. Set $a=\epsilon I_{2\times 2}$ and $\epsilon=0.01$, $\mathbf{b} = (1-x_2,1-x_1)^\top$, $c=2$ and $f$ so that $u(x_1,x_2)=\sin(\pi x_1)\sin(\pi x_2)$} in \eqref{pde} for $d=2$, on a domain $\Omega\approx (0,1)^2$ enclosed by a piecewise curved sinusoidal boundary; we refer to Figure \ref{fig:ex1_one} for an illustration. We impose non-homogeneous Dirichlet boundary conditions on $\partial \Omega$.
		
		\begin{figure}\hspace{-.2cm}
			\includegraphics[height=5cm,width=5.4cm]{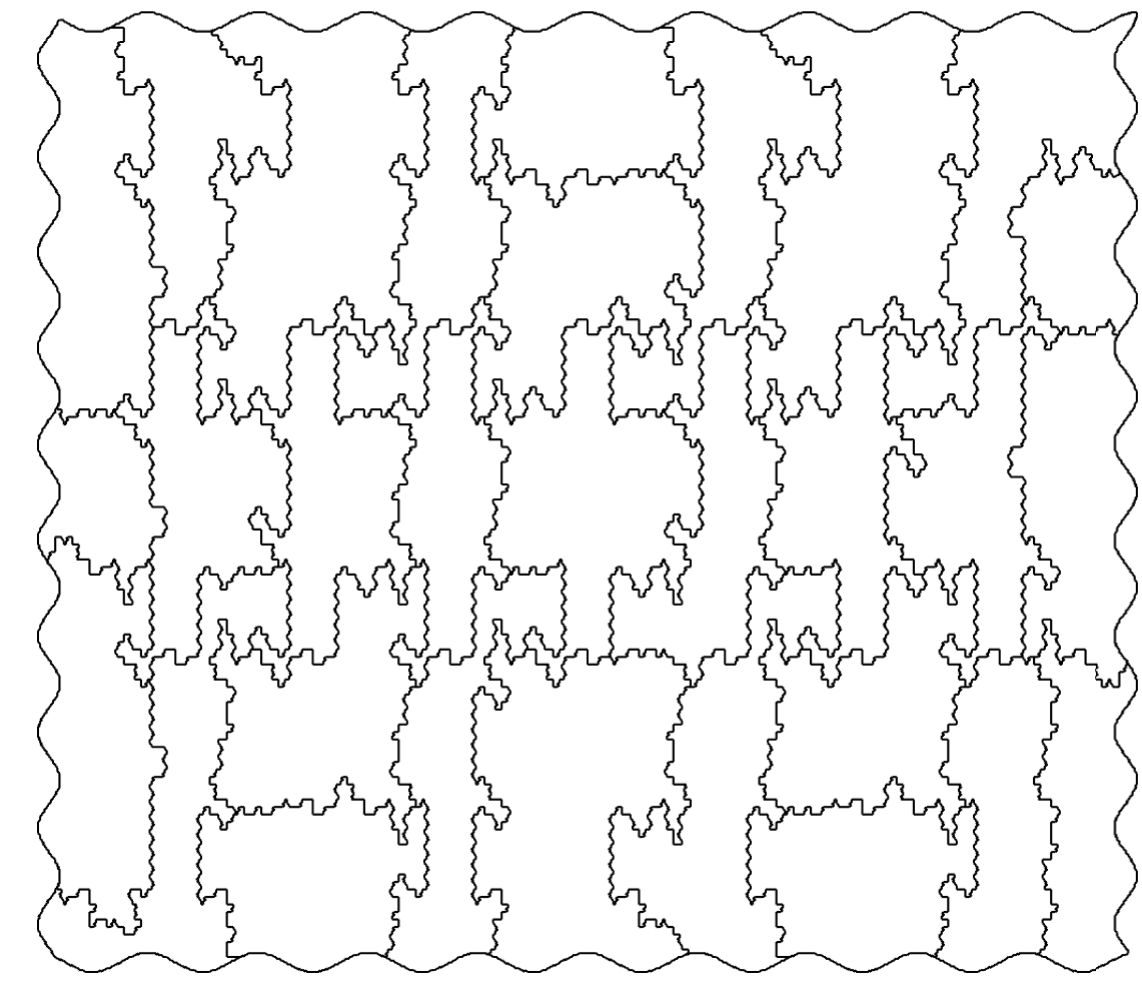}
			\includegraphics[height=5cm,width=5.4cm]{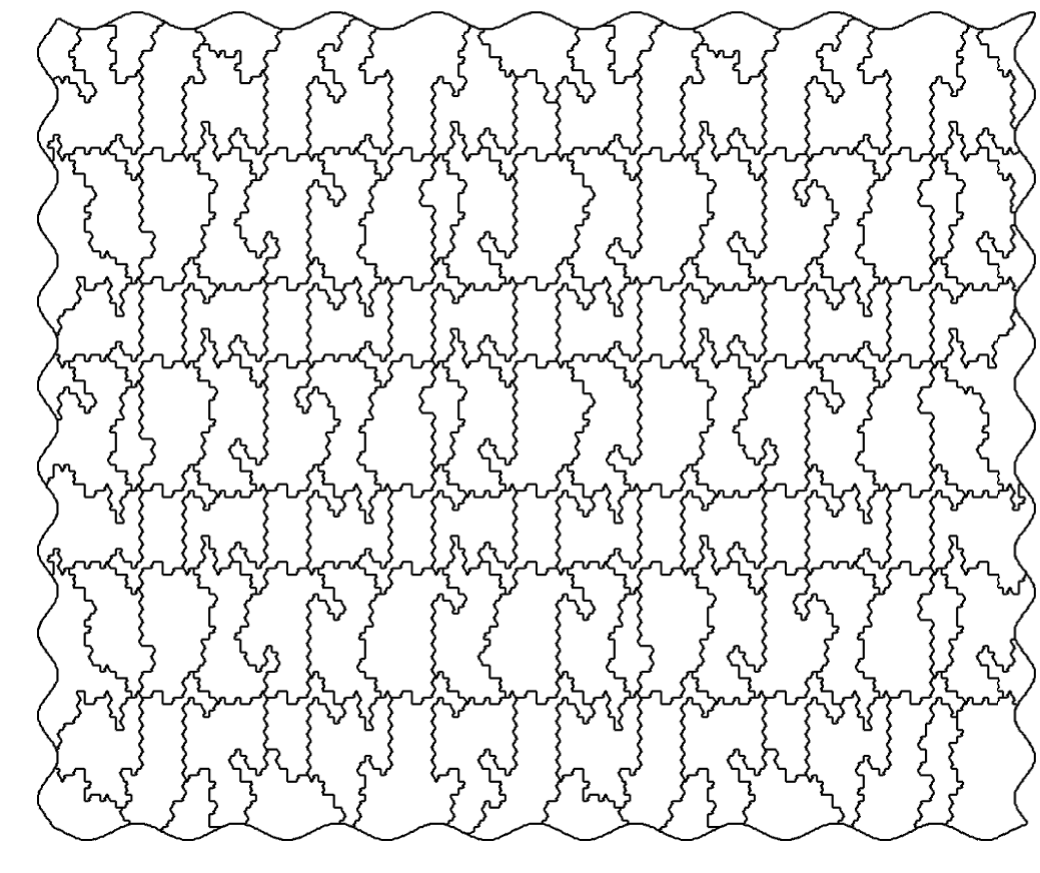}
			\vspace{-2mm}
			\caption{Example  {2}. The computational domain $\Omega$ and two meshes with $30$ and $132$ elements, respectively. }\label{fig:ex1_one}
		\end{figure}
		
		The mesh is constructed as follows. An initial  {curved mesh, fitted to the sinusoidal boundary via the level set approach described above,} is subdivided into a very fine background subdivision consisting of approximately $500$K sub-elements. The latter is, in turn, agglomerated into $30,132,555,2151,8337$ curved/polygonal elements using a  standard mesh partitioning software. The parameters chosen in the partitioning software have been selected to yield a high-frequency  {`sawtooth'} vertical boundary for many of the agglomerated elements. We refer to Figure \ref{fig:ex1_one} for an illustration of the resulting meshes with $30$ and $132$ agglomerated elements.
		
		In Figure \ref{fig:ex1_two}, the convergence history for $p=1,2,3,4$ for the  {errors $\ndg{u-u_h}$ and $\norm{u-u_h}{\Omega}$} against $\sqrt{{\rm Dof}}$ is presented for the aforementioned agglomerated meshes with $30,132,555,2151,8337$ elements. Here, we clearly observe that,  for each fixed $p$,  the dG- and $L_2(\Omega)$-norm  errors converge to zero at the optimal rates  {$\mathcal{O}(h^{p})$ and $\mathcal{O}(h^{p+1})$}, respectively, as the mesh size $h$ tends to zero.  {Further, we report also the error in the stronger `streamline-diffusion' norm $\nsdg{u-u_h}$ in Figure \ref{fig:ex1_two}; here we have chosen $\lambda_K=\mathcal{O}(\rho_K/p_K^2)$.  For each fixed $p$,  the errors converge to zero at the optimal rates $\mathcal{O}(h^{p})$, as the mesh size $h$ tends to zero.}

		Finally, in Figure \ref{fig:ex1_two} (bottom-right),  we also investigate the convergence history of the dG-EASE solution under $p$-refinement, using  {the mesh with $132$} elements shown in Figure~\ref{fig:ex1_one}(right). Here, we observe exponential convergence of  {the three} norm errors against  {$\sqrt{{\rm Dof}}$}.  {Interestingly, we observe that the difference between the errors $\ndg{u-u_h}$ and $\nsdg{u-u_h}$ is insignificant.}

		\begin{figure}
			\includegraphics[height=5cm,width=5.5cm]{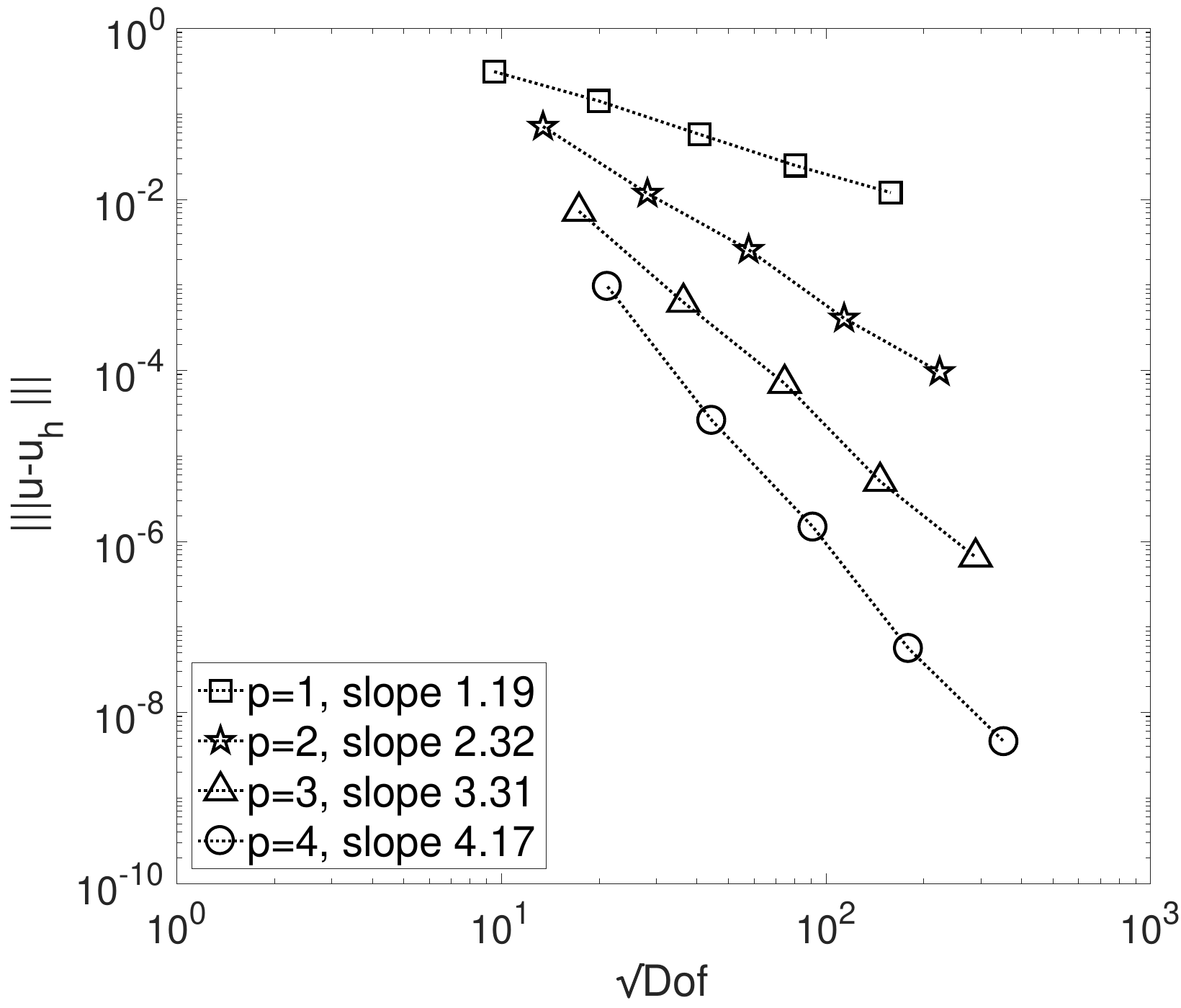}
			\includegraphics[height=5cm,width=5.5cm]{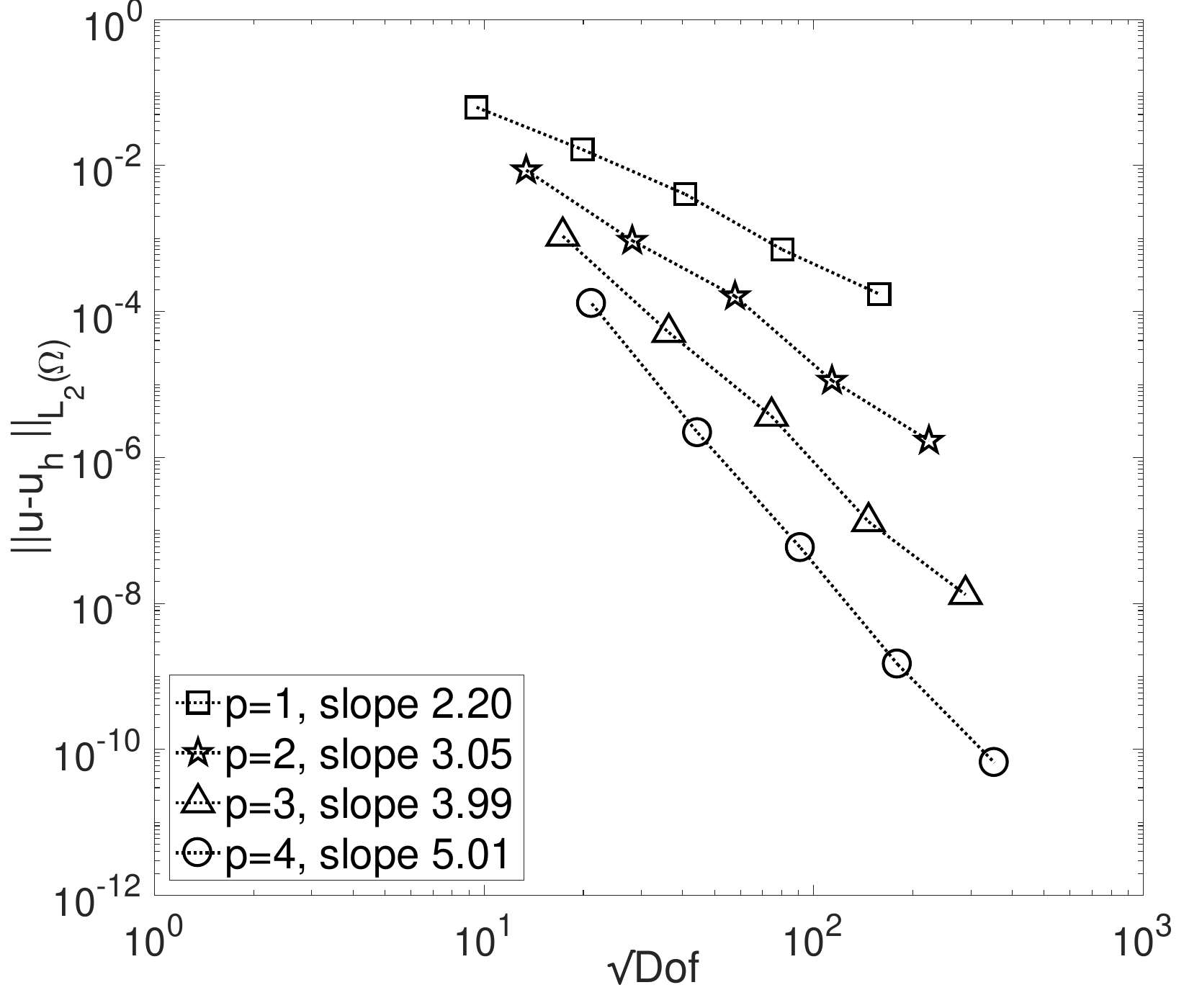}\hspace{5mm}\\
			\includegraphics[height=5cm,width=5.5cm]{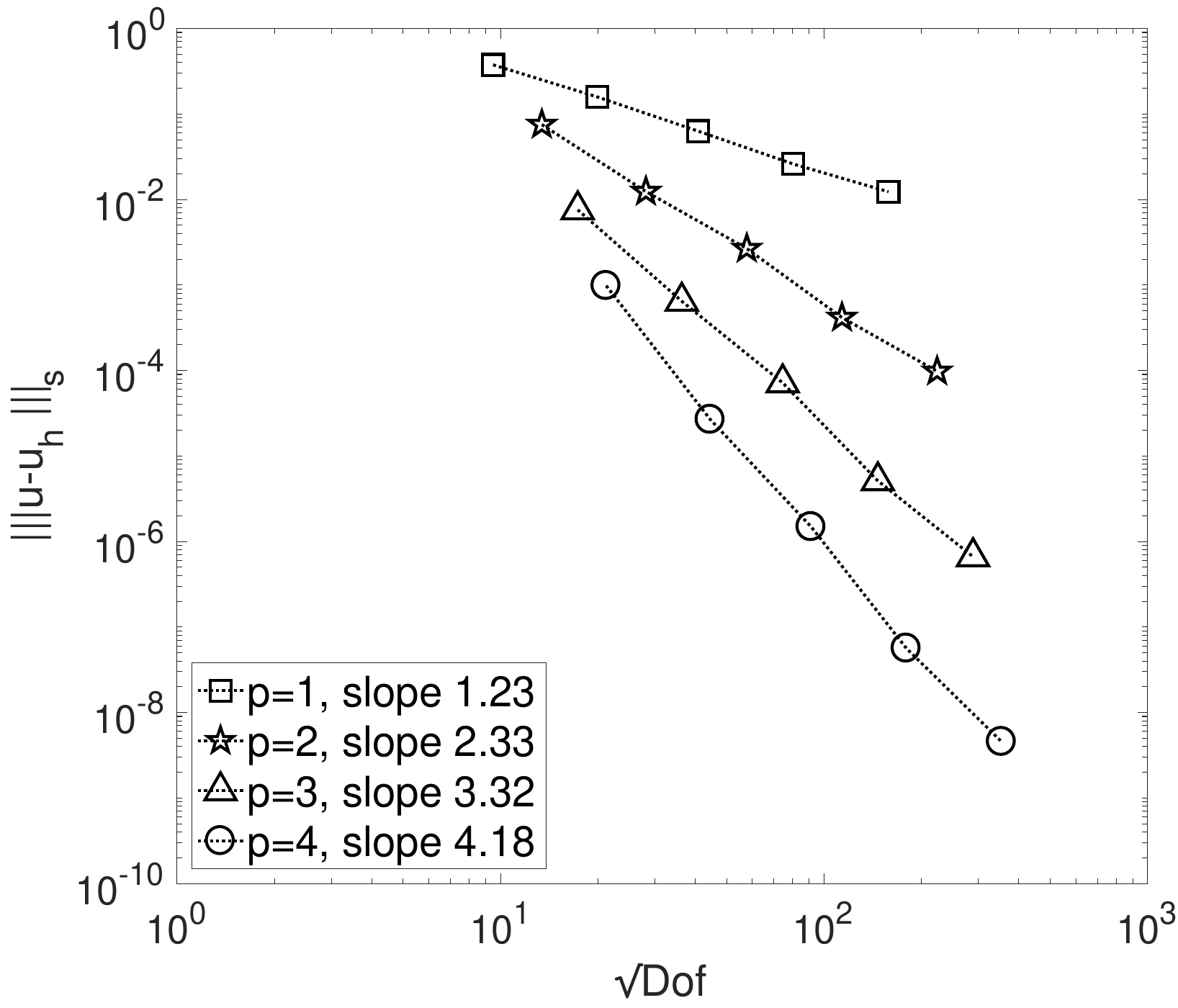}
			\includegraphics[height=5cm,width=5.5cm]{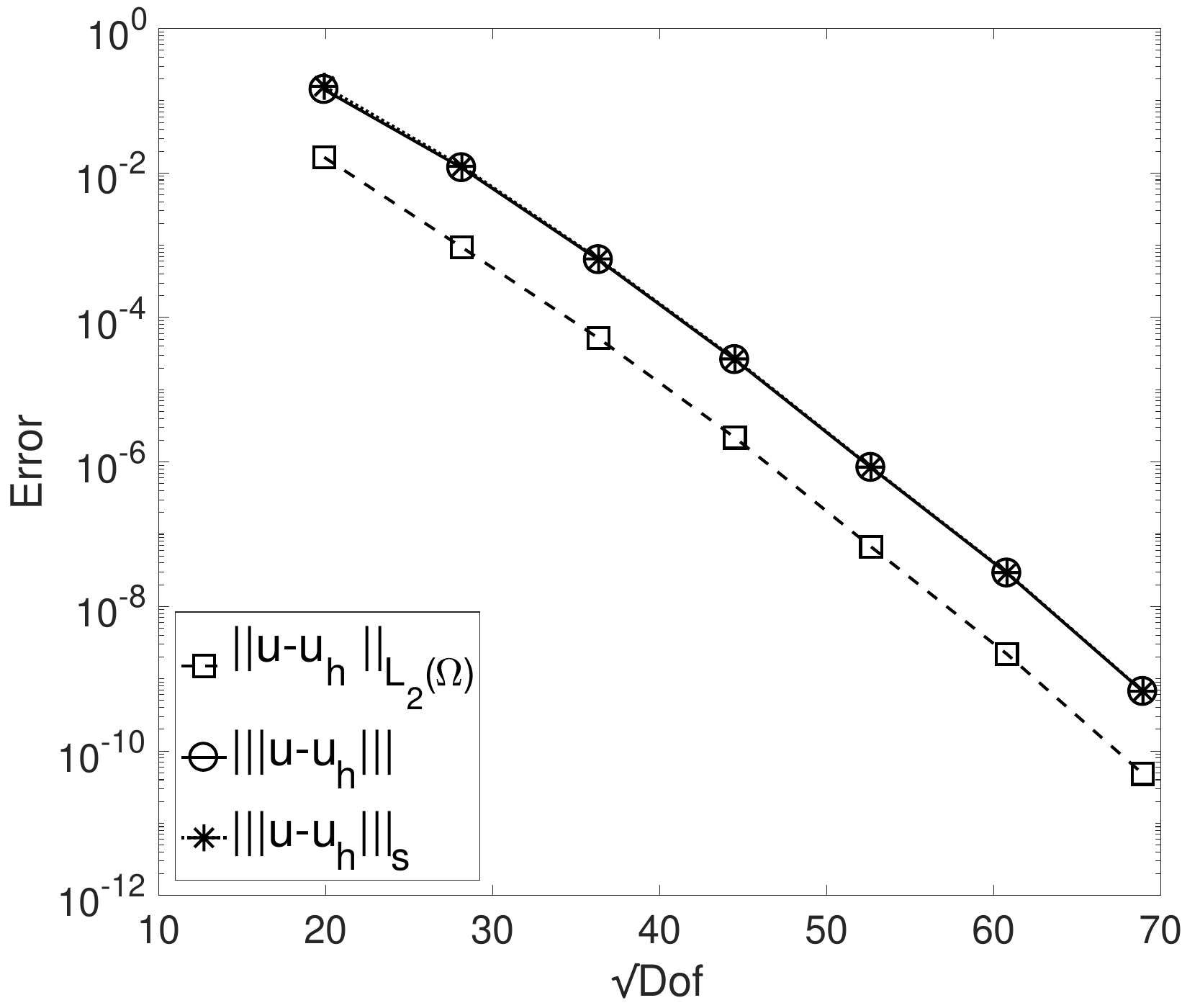}\hspace{5mm}
			\vspace{-2mm}
			\caption{ {Example 2. Convergence history for $p=1,2,3,4$, in $\ndg{\cdot}$ (top-left), $\norm{\cdot}{\Omega}$(top-right) and $\nsdg{\cdot}$(bottom-left) against $\sqrt{{\rm Dof}}$ for the meshes exemplified in Figure \ref{fig:ex1_one} with $30,132,555,2151,8337$ elements, respectively. Bottom right: convergence history for $p=1,2,\dots,7$ in the three norms against $\sqrt{{\rm Dof}}$ for the mesh with $132$ elements shown in Figure~\ref{fig:ex1_one} (right).}
			}\label{fig:ex1_two}
		\end{figure}

		\subsection{Example 3: stability study}  We  {continue by assessing} the stability of the  {dG-EASE method}
		for convection-diffusion problems in the presence of
		unresolved lower-dimensional sharp solution layers. To this end, for $d=2$, we set $a=\epsilon I_{2\times 2}$ and $\epsilon=10^{-4}$, $\mathbf{b} = (1,1)^\top$, $c=0$ and $f=1$ in \eqref{pde}. We solve this problem on a variant of the domain $\Omega$ from Example  {2} above, in which circular internal pieces of the domain of various radii have been removed; we refer to Figure \ref{fig:ex2_one}(left) for an illustration of the domain and sample mesh of essentially arbitrarily-shaped elements obtained using a completely analogous construction to that used in Example  {2}. We close the problem by prescribing homogeneous Dirichlet boundary conditions on $\partial\Omega$ (i.e., including the internal boundaries at the holes).  We expect strong exponential boundary layers on the top and right portions of the curved boundary, as well as variable intensity layers at the outflow portions of the internal hole boundaries.
		
		\begin{figure}
			\includegraphics[height=5cm,width=5.5cm]{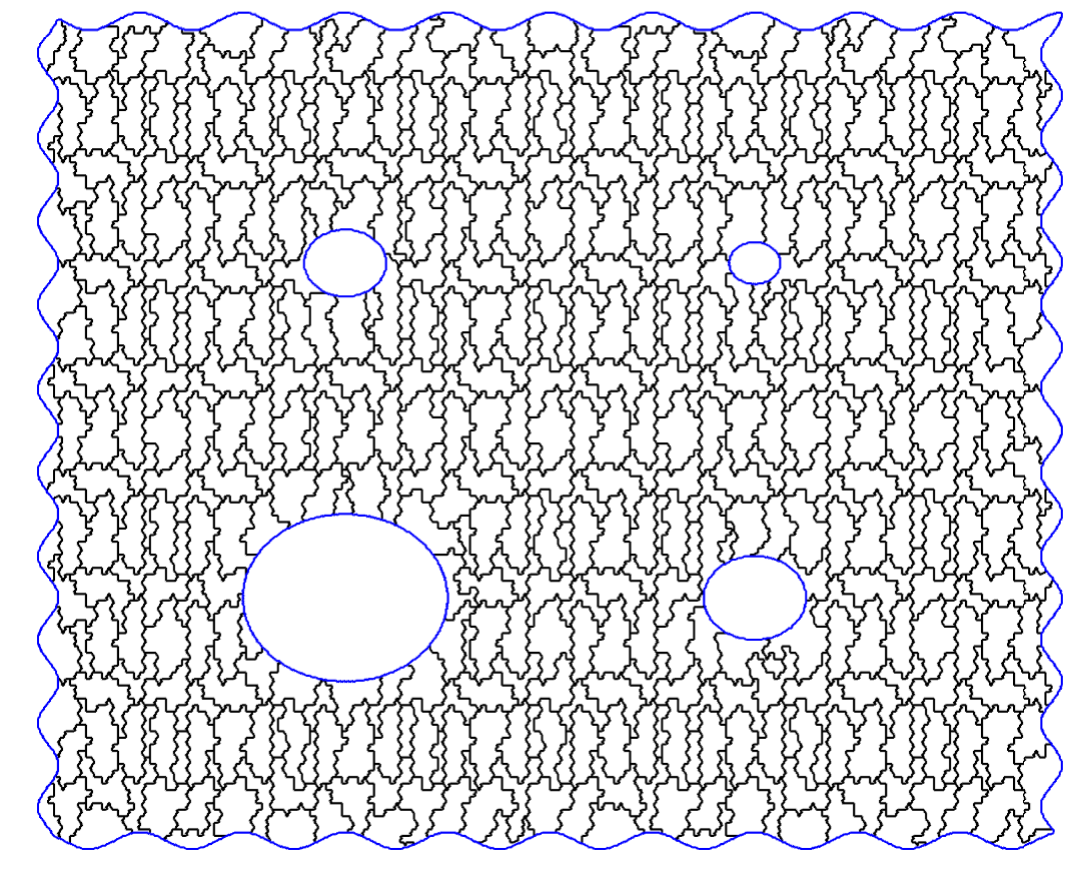}
			\includegraphics[height=5cm,width=5.8cm]{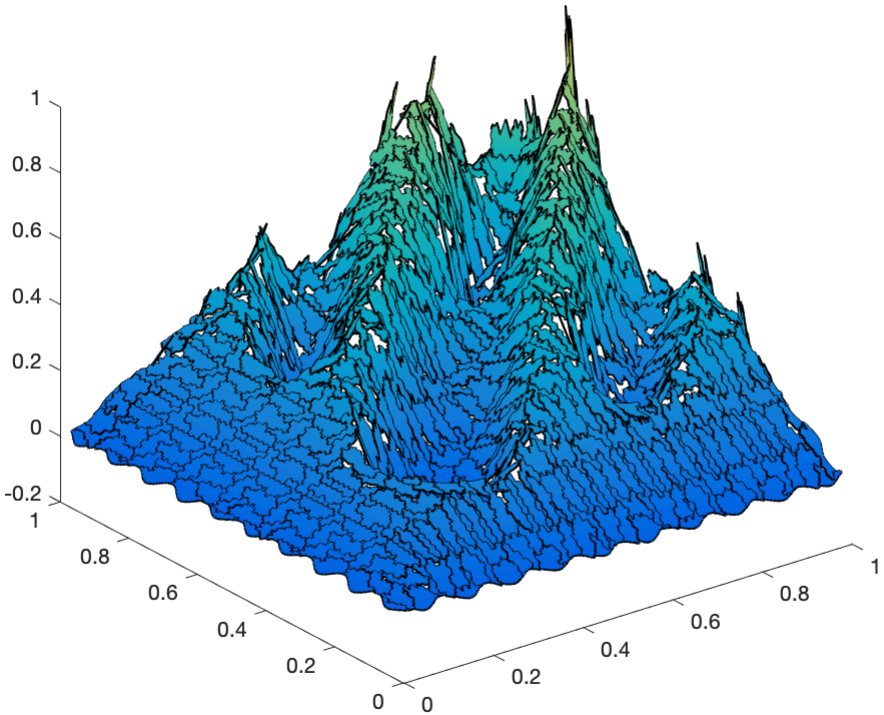}
			\vspace{-2mm}
			\caption{Example  {3}. Domain with holes and $531$-element mesh (left). Corresponding solution obtained with \Rev{$p=1$} (right).}\label{fig:ex2_one}
		\end{figure}
		In Figure \ref{fig:ex2_one} (right), we  {provide} the dG-EASE solution using  $p=1$ and the mesh of $531$ elements shown on the left plot. This mesh is not fine enough to resolve the singularly perturbed behaviour in the vicinity of the outflow portions of the boundary. Nevertheless, the dG-EASE method provides a stable discretization with very localized, expected, oscillatory behaviour at the vicinity of the outflow boundary. The stable behaviour of dG-EASE with respect to the size of the P\'eclet number $Pe:=\|\mathbf{b}\|/\epsilon$ is expected due to the upwind flux used in $B_{\rm ar}(\cdot,\cdot)$; nonetheless, to the best of our knowledge, its performance in the context of elements with such geometrical shape generality has not been tested before in the literature. To highlight the behaviour of the method  {on different meshes}, we  {report} the dG-EASE solution, obtained with meshes  composed of $129$ and $2048$ linear elements in Figure \ref{fig:ex2_two} (top). In both cases the mesh is not sufficiently fine to resolve the exponential boundary layer behaviour, while the finer mesh with $2048$ linear elements sufficiently resolves the parabolic layers initiated at the holes.  
		
		
		{Finally, we test the hyperbolic limit case by setting $\epsilon=0$. The DG-EASE solution, shown in Figure \ref{fig:ex2_two} (bottom), remains stable and there is no oscillation around the outflow boundaries, as expected by a stabilised method.}
		
		
		\begin{figure}
			\includegraphics[height=5.2cm,width=6cm]{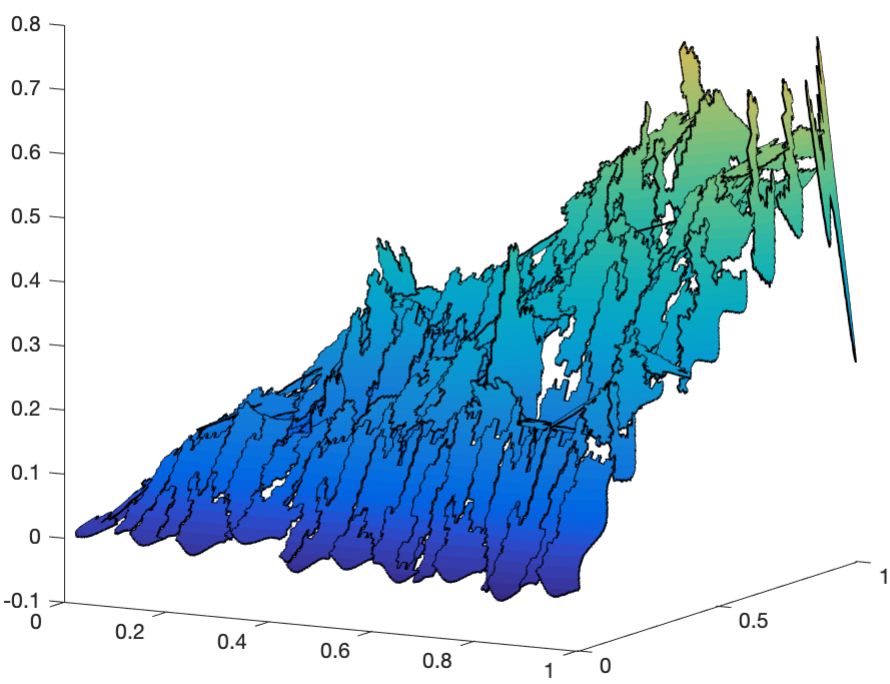}\vspace{0cm}
			\includegraphics[height=5.2cm,width=6cm]{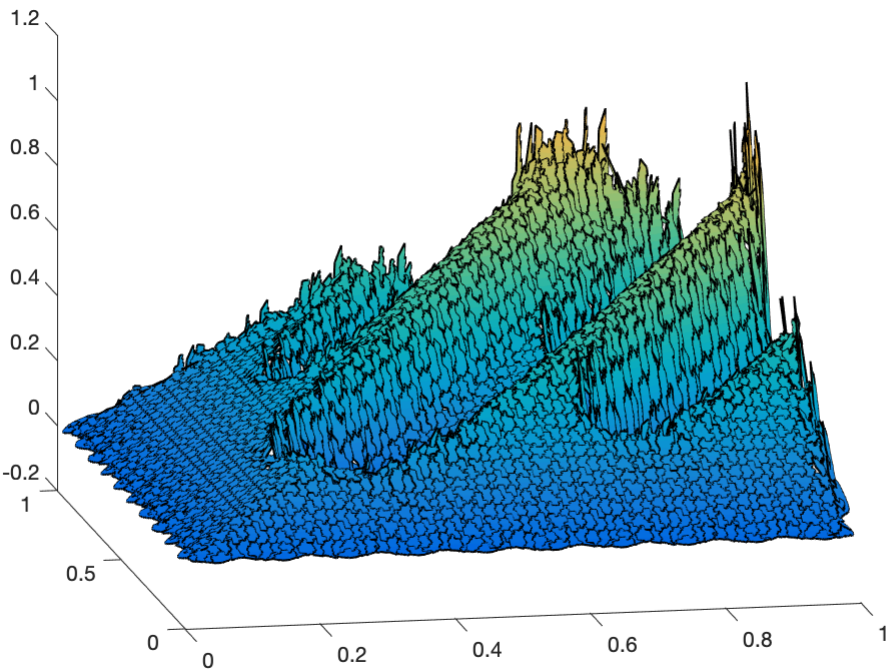}
			\includegraphics[height=5cm,width=6cm]{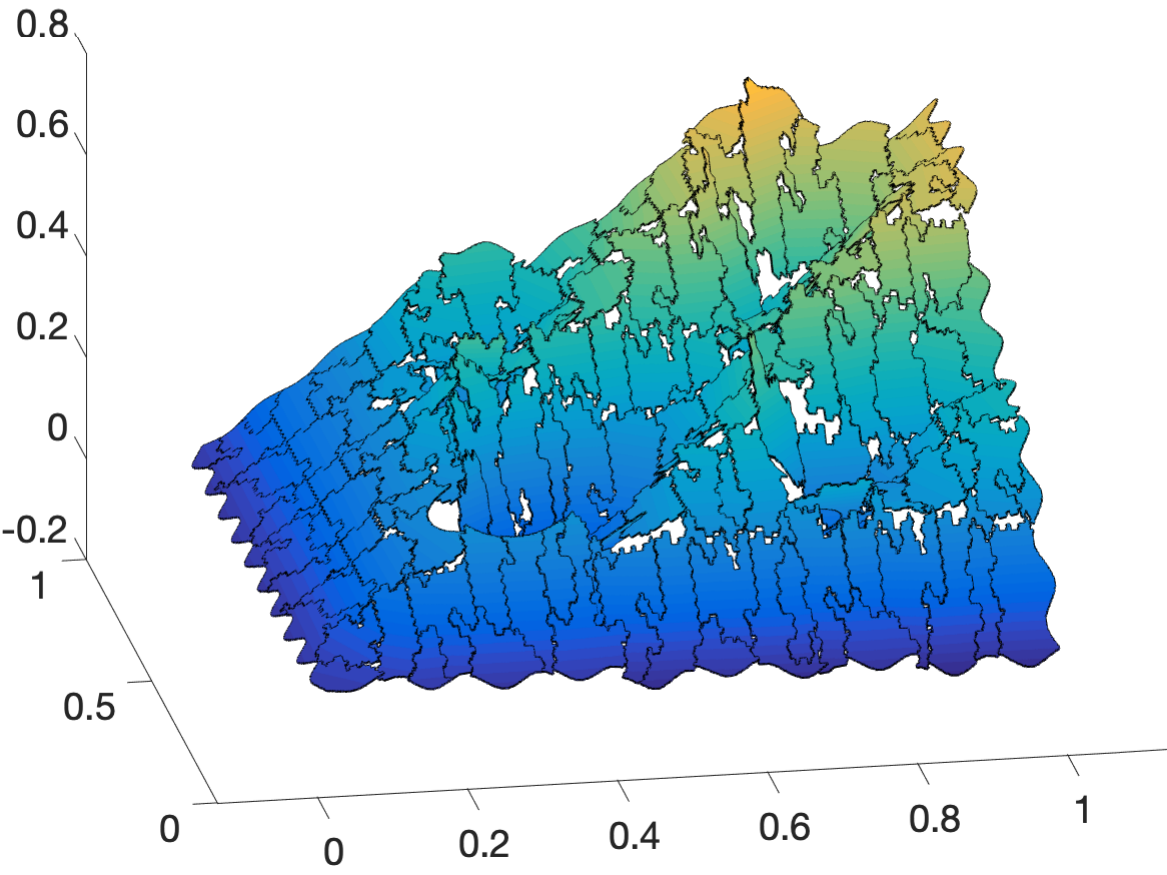}
			\includegraphics[height=5cm,width=6.2cm]{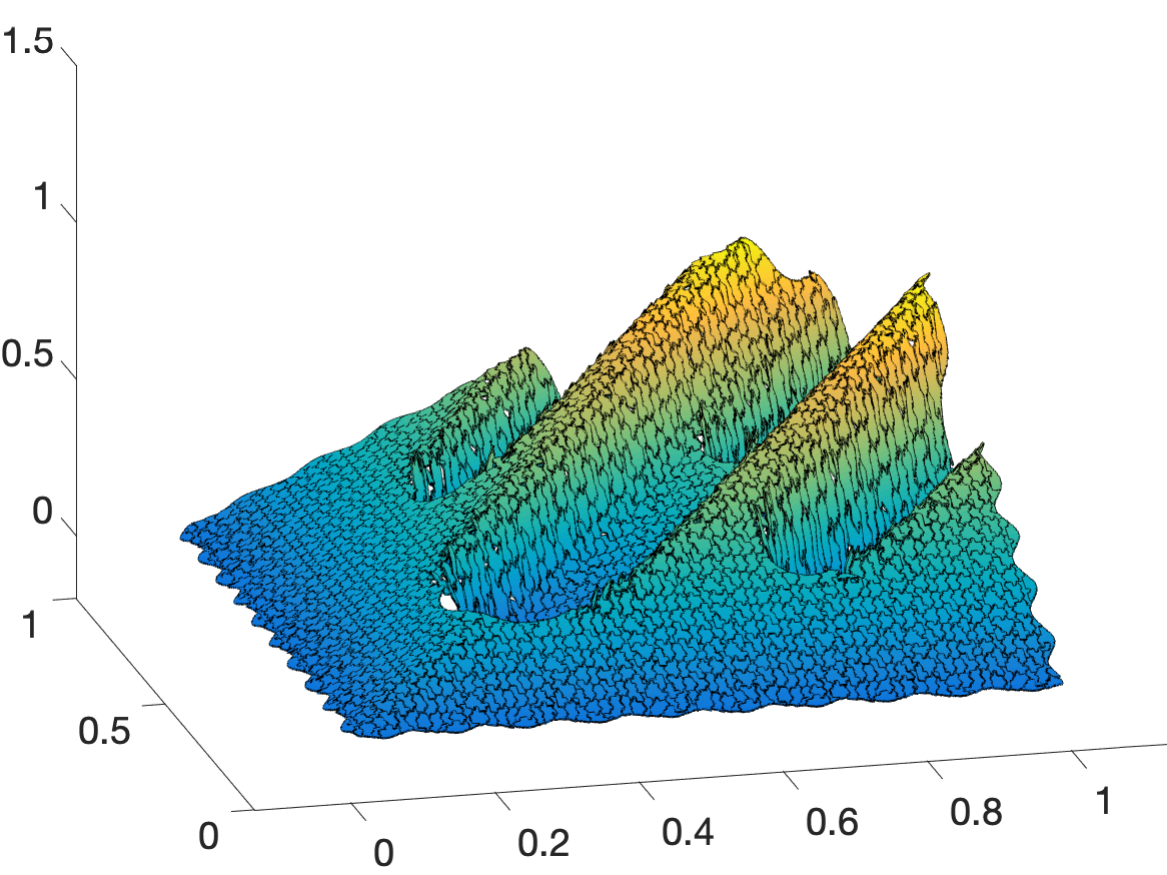}
			\vspace{-2mm}
			\caption{\Rev{Example  {3. Solutions computed for $\epsilon=10^{-4}$ (top) and  $\epsilon=0$ (bottom) using linear elements. The meshes are composed of $129$ (left) and  $2048$ (right) elements with compex shapes.}}}
			\label{fig:ex2_two}
		\end{figure}

		
		\subsection{ {Example 4: changing type PDE across a curved interface}}
		
		{To highlight a number of attractive features of the dG-EASE approach, we consider a coupled parabolic-hyperbolic partial differential equation, whose type changes across a sinusoidal interface $\Gamma$. Let $\Omega =\Omega_1\cup\Omega_2\cup\Gamma$ with
			\[
			\begin{array}{l}
				\Omega_1=\{(x_1,x_2):-1 \leq x_1 \leq 1,\  A \sin(\omega\pi x_1)\le x_2\le1 \},\\
				\Omega_2=\{(x_1,x_2):-1 \leq x_1 \leq 1,\  -1\le x_2\le A \sin(\omega\pi x_1)\},
			\end{array}
			\]
			for $A,\omega>0$ whose precise values will be given below; we refer to Figure \ref{fig:ex4_one} for an illustration. On this geometrical setting, we consider the problem:
			\begin{equation*}\label{eq:change}
				\left\{
				\begin{array}{ll}
					-x_1^2u_{x_2^{}x_2^{}} + u_{x_1^{}}+A \omega \pi \cos (\omega \pi x_1^{})u_{x_2^{}}+u = 0, &\quad \text{in } \Omega_1, \vspace{2mm}
					\\
					u_{x_1^{}}+A\omega\pi\cos(\omega\pi x_1^{})u_{x_2^{}}+u = 0, &\quad \text{in } 
					\Omega_2,
				\end{array}
				\right.
			\end{equation*}
			coupled with inflow and Dirichlet boundary conditions, so that the analytical solution is given by
			\begin{equation*}\label{eq:change-sol}
				u(x_1,x_2) =
				\begin{cases}
					\sin (\frac{\pi}{2}(1+x_2-A\sin(\omega \pi x_1))) \exp(-(x_1+\frac{\pi^2x_1^3 }{12})),& \text{in }
					\Omega_1,
					\vspace{2mm}
					\\
					\sin (\frac{\pi}{2} (1+x_2-A\sin(\omega \pi x_1))) \exp(-x_1). &\text{in } 
					\Omega_2.
				\end{cases}
			\end{equation*}
			This problem is hyperbolic when $x_2^{}\leq  A\sin(\omega \pi x_1^{})$, $x_1^{}\in(-1,1)$, and parabolic otherwise. 
			The normal flux of the exact solution is continuous  across the interface $\Gamma$ with equation $x_2^{}=A\sin(\omega\pi x_1^{})$, while the solution itself has a discontinuity across the interface.  This problem is a variant of an example from \cite{thesis,cangiani2015hp}.  As such, there is no discontinuity penalisation imposed at the interface $\Gamma$. \Rev{Moreover, we point out that  ${\bf b}\cdot {\bf n}=0$ at the interface in this example. 
			}
		}
		
		{Our aim is to highlight the performance of dG-EASE of arbitrary order, when the mesh is fitted with respect to a discontinuity of the exact solution. To that end, we focus on $p$-version convergence, using $64$ rectangular elements with curved faces exactly fitting the interface; we refer to Figure \ref{fig:ex4_one} for an illustration with $A=0.025$, and $\omega = 8$ and $\omega = 16$, respectively. }

		{Interestingly, the mesh is \emph{not} aligned with the inflow and outflow parts of the boundary $\partial\Omega$. This is due to the oscillating coefficient of the first order term. In Figure \ref{fig:ex4_one}, the inflow parts of the boundary are marked in red; on these parts, inflow boundary conditions are imposed. Correspondingly, this pattern continues in the internal element faces in which the inflow parts of $\partial_- K$ are also not aligned with the faces. As such, face integral terms in the dG method may be computed only on parts of a face of  a rectangular element. Nonetheless, the method is able to cope unaltered with this complication. The quadrature is implemented in the composite fashion described in Example 1 above.}

		\begin{figure}\hspace{-.2cm}
			\includegraphics[height=4.2cm,width=4.8cm]{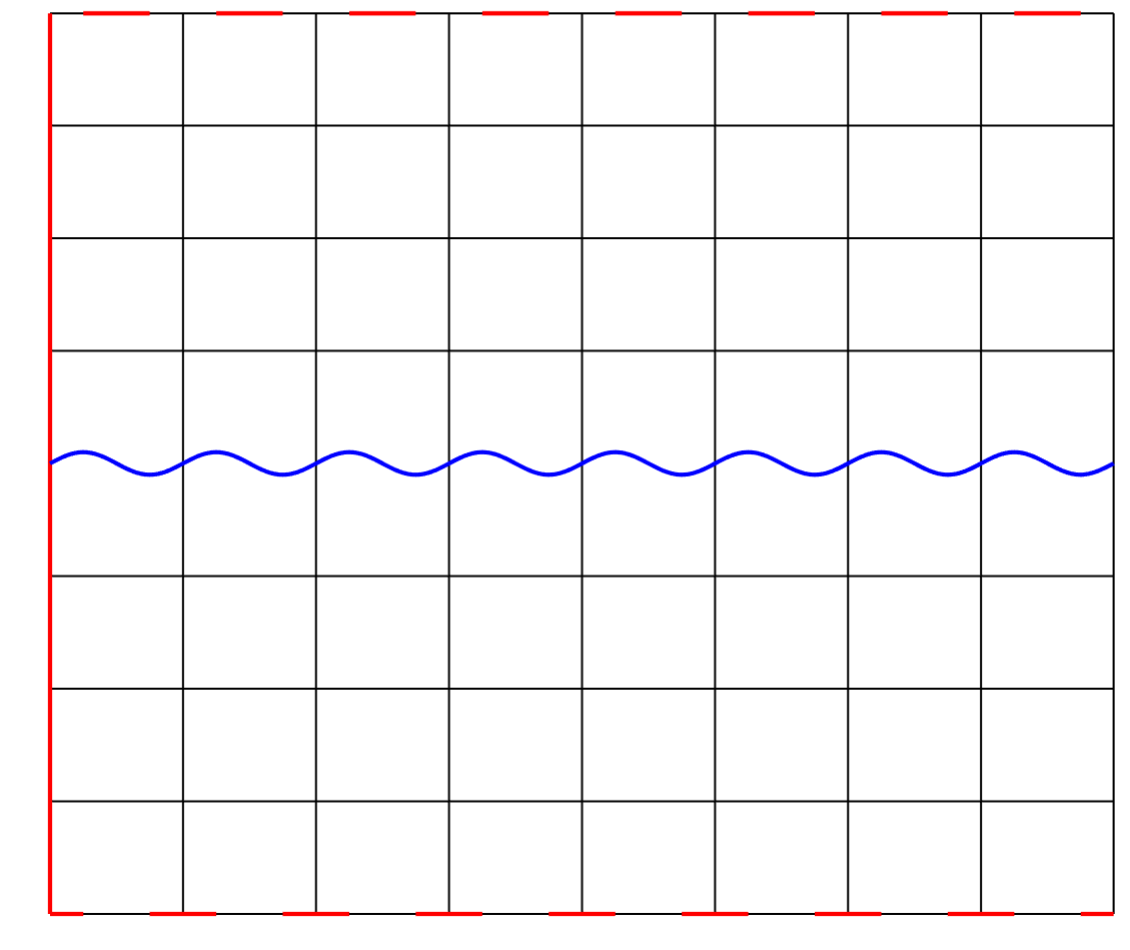}\hspace{.4cm}
			\includegraphics[height=4.2cm,width=4.8cm]{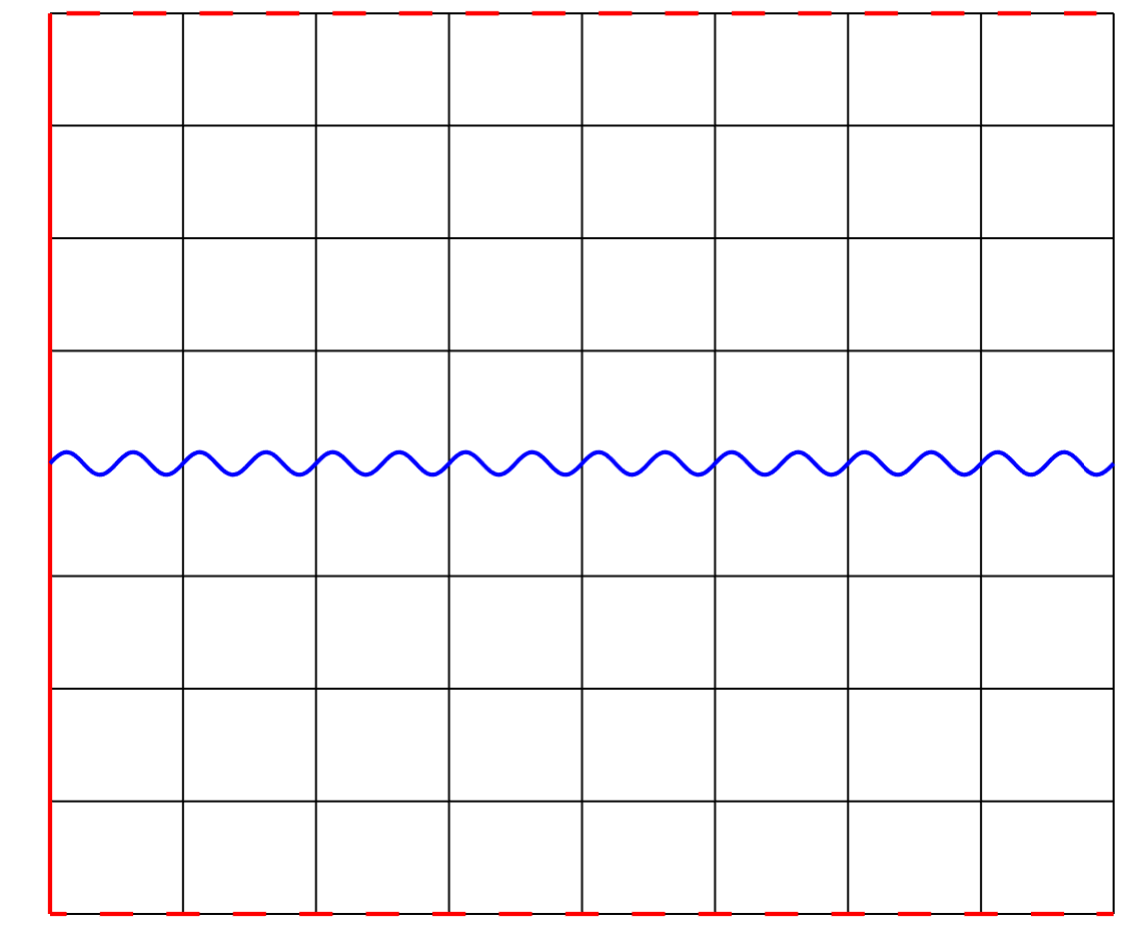}
			\vspace{-2mm}
			\caption{ {Example 4. The domain $\Omega$ with the $64$ element mesh fitted to the interface (blue) for $A=0.025$, and $\omega = 8$ (left) and $\omega = 16$ (right). The inflow parts of the boundary are drawn in red.  }}\label{fig:ex4_one}
		\end{figure}
		
		{We begin by setting $A=0.025$ and $\omega =8$. In Figure~\ref{fig:ex4_two}, we record the convergence history of the dG-EASE solution under $p$-refinement, using the mesh shown in Figure~\ref{fig:ex4_one} (left) and $p=1,\dots,13$. Although the elements are perfectly aligned with the interface $\Gamma$, the mesh is still coarse: each element includes roughly one full oscillation of the wind ${\bf b}$.  Still we observe exponential convergence of $\ndg{u-u_h}$ and $\|u-u_h\|_{\Omega}$ errors against $\sqrt{{\rm Dof}}$ under $p$-refinement. This result reinforces the claim that dG-EASE on perfectly aligned meshes with appropriate quadrature rules can lead to spectral accuracy. In contrast, if the mesh is not aligned exactly with the solution's discontinuity, the error is only expected to decay at an algebraic rate, according to standard best approximation results.}
		
		{Next, we set $\omega =16$ and we record the convergence history under $p$-refinement, for $p=1,\dots,17$, for the fixed mesh from Figure~\ref{fig:ex4_one}(right). Here $64$ elements constitute a very coarse mesh as, at the interface,  there are now two full oscillations of the wind ${\bf b}$ per element. Again, we observe exponential convergence of $\ndg{u-u_h}$ and $\|u-u_h\|_{\Omega}$  against $\sqrt{{\rm Dof}}$ under $p$-refinement, after an initial plateau for $p\le 5$. This is expected as the dG-EASE approach is not designed as a multiscale framework. Nevertheless, for $p\ge 6$, exponential convergence is observed.}
		
		\begin{figure}[t]
			\includegraphics[height=4.8cm,width=5.8cm]{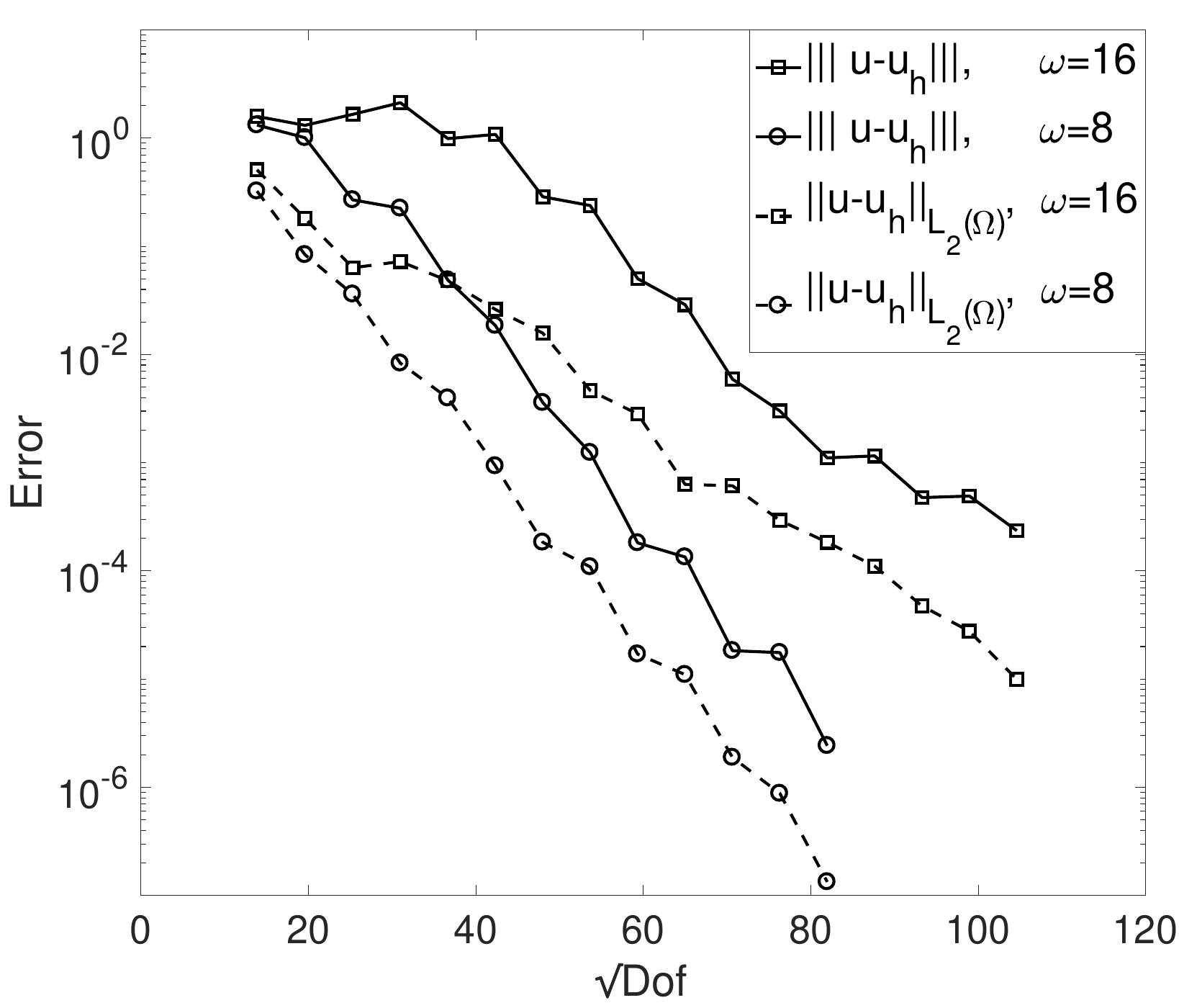} \hspace{1cm}
			\vspace{-2mm}
			\caption{ {Example 4. Convergence history for $p=1,2,\dots$ in the $\ndg{\cdot}$ and $\norm{\cdot}{\Omega}$ against $\sqrt{{\rm Dof}}$ for the curved triangular mesh with $64$ elements shown in Figure~\ref{fig:ex4_one}, for $A =0.025$, $\omega=8,16$.}} \label{fig:ex4_two}
		\end{figure}

		\section{Acknowledgements} 
		
		{
			We are grateful to the anonymous referees and to the editor for their constructive comments which helped to improve this work substantially. AC gratefully acknowledges support from the MRC (MR/T017988/1), ZD from IACM-FORTH, Greece, and  EHG from The Leverhulme Trust (RPG-2015- 306).  This work was supported by the Hellenic Foundation for Research and Innovation (H.F.R.I.) under the ``First Call for H.F.R.I. Research Projects to support Faculty members and Researchers and the procurement of high-cost research equipment grant'' (Proj. no. 3270).}

		\bibliographystyle{siam} 

		\bibliography{DG-EASE_Revision} 
	\end{document}